\documentclass[a4paper,11pt]{article}
\usepackage{amsfonts}
\usepackage{amssymb}
\usepackage{amsmath}
\usepackage{mathtools}
\usepackage{mathrsfs}
\usepackage{bbm}
\usepackage{xcolor}
\usepackage[utf8]{inputenc}
\usepackage[margin=1.5in]{geometry}
\usepackage[T1]{fontenc}
\usepackage{amsthm}
\usepackage[english]{babel}
\usepackage{graphicx}
\usepackage{tikz-cd}
\usepackage{tikz}
\usepackage{dsfont}
\usetikzlibrary{matrix, arrows}

\newcommand{\nn}{\mathbb{N}}
\newcommand{\qq}{\mathbb{Q}}
\newcommand{\rr}{\mathbb{R}}
\newcommand{\cc}{\mathbb{C}}

\newcommand{\oo}{\mathcal{O}}

\newcommand{\mi}{^{-1}}

\newtheorem{theorem}{Theorem}[subsection]
\newtheorem*{theorem*}{Theorem}
\newtheorem{prop}[theorem]{Proposition}
\newtheorem{lemma}[theorem]{Lemma}
\newtheorem{corollary}[theorem]{Corollary}
\theoremstyle{definition}
\newtheorem{example}[theorem]{Example}
\theoremstyle{definition}
\newtheorem{remark}[theorem]{Remark}
\theoremstyle{definition}
\newtheorem{defi}[theorem]{Definition}

\DeclareMathOperator{\ord}{ord}

\DeclareMathOperator{\vol}{vol}

\DeclareMathOperator{\Specop}{Spec}

\newcommand\Spec[1]{\Specop \mathrm{#1}}

\newcommand\charac[1]{\mathrm{char}\, #1}

\newcommand\vp{\varphi}
\newcommand\nv{\mathcal{N}(V)}
\newcommand\cpe{\mathrm{(ce)}}

\newcommand\field{\mathbb{K}}
\newcommand\norm{\zeta}
\newcommand\bsni{\bigskip\noindent}
\newcommand\an{^{\mathrm{an}}}
\newcommand\PSH{\mathrm{PSH}}
\newcommand\FS{\mathrm{FS}}
\newcommand\MA{\mathrm{MA}}
\newcommand\Sp{\mathrm{Sp}}
\newcommand\Gr{\mathrm{Gr}}
\newcommand\gr{\mathrm{gr}}
\newcommand\bigN{\mathrm{N}}
\newcommand\diag{\mathrm{diag}}
\newcommand\latt{\mathrm{latt}}

\author{REBOULET Rémi}
\title{THE ASYMPTOTIC FUBINI-STUDY OPERATOR OVER GENERAL NON-ARCHIMEDEAN FIELDS}
\begin{document}

\maketitle

\begin{abstract}\noindent Given an ample line bundle $L$ over a projective $\field$-variety $X$, with $\field$ a non-Archimedean field, we study limits of non-Archimedean metrics on $L$ associated to submultiplicative sequences of norms on the graded pieces of the section ring $R(X,L)$. We show that in a rather general case, the corresponding asymptotic Fubini-Study operator yields a one-to-one correspondence between equivalence classes of bounded graded norms and bounded plurisubharmonic metrics that are regularizable from below. This generalizes results of Boucksom-Jonsson where this problem has been studied in the trivially valued case.
\end{abstract}

\tableofcontents

\newpage

\section*{Introduction.}

Let $X$ be a projective variety defined over a non-Archimedean field $\field$, endowed with a line bundle $L$ which we will assume to be ample. To those objects, one can associate their \textbf{Berkovich analytification}, allowing the use of techniques analogue to those of complex analytic geometry. By considering the non-Archimedean avatar of the space of Kähler potentials: the space $\mathcal{H}(L)$ of \textbf{Fubini-Study potentials}, one can define the class of \textbf{plurisubharmonic} metrics. In this article, we look at the quantization of such plurisubharmonic metrics by sequences of norms acting on sections of tensor powers of $L$, in line with the ideas of \cite{darvasqppt}. In particular, we generalize results in this direction obtained in \cite{ber}, building on the theory developed in \cite{boueri}.

\bsni A central object of study in complex pluripotential theory is the \textbf{space of Kähler potentials}: given a polarized compact connected Kähler manifold $(X/\cc,L)$, and the curvature $\omega_L$ of a reference metric $\phi_0$ on $L$, 
$$\mathcal{H}=\{\phi\in C^\infty(X),\, \omega_L + dd^c\phi>0\}.$$
In non-Archimedean pluripotential theory, where we look at the Berkovich analytifications of $\field$-varieties, we similarly choose a reference metric $\phi_0$ on $L$, which gives a function $s\mapsto |s|_{\phi_0}=|s|$ acting on sections of $L$ (which extends naturally to sections of powers of $L$), then consider the set of \textbf{Fubini-Study potentials} $\mathcal{H}(L)$ whose objects are continuous functions (or \textit{potentials}) $\vp$ from the Berkovich analytification $X^{\mathrm{an}}$ to the reals, of the form
$$\vp=m\mi\max_i\{\log |s_i| + c_i\},$$
where the $(s_i)$ are sections of $mL$ forming a basis of the space $H^0(mL)$, and the $c_i$ are real constants. By allowing decreasing limits, we obtain a much larger class, $\PSH(L)$, that of \textbf{$\phi_0$-plurisubharmonic} functions (or $\phi_0$-\textit{psh} functions).

\bigskip

\noindent Assume from now on $\field$ to be a non-Archimedean field. A simple way to generate Fubini-Study potentials is to consider a norm $\norm$ on $H^0(mL)$. If the absolute value on $\field$ admits a discrete value group, then all such norms are \textbf{diagonalizable}, in the sense that there exists a basis $(s_i)$ of $H^0(mL)$ with
$$\norm\left(\sum_{i=1}^{h^0(mL)}a_i\cdot s_i\right)=\max_i |a_i|\cdot \norm(s_i),$$
where the $a_i$ are elements in $\field$. In the general case, this would only be an inequality. (If $\field$ is densely valued, there is still a way to make sense of this operator, but we will leave this definition to the second section.) The ($m$-th) \textbf{Fubini-Study operator} then sends the norm $\norm$ to the potential
$$\FS_m(\norm)=m\mi\,\max_i\{\log |s_i| - \log \norm(s_i)\}.$$

\bsni More generally, we may consider the class of \textbf{graded norms} on the algebra of multisections $R(X,L)=\bigoplus_{m\geq 1} H^0(mL)$, that is: of sequences $m\mapsto \norm_m$, where for all $m$, $\norm_m$ is a norm on $H^0(mL)$, satisfying a \textit{submultiplicativity condition}: given sections $s_m\in H^0(mL)$, $s_n\in H^0(nL)$,
$$\norm_{m+n}(s_m\cdot s_n)\leq \norm_m(s_m)\cdot \norm_n(s_n).$$
The pointwise limit $\FS_m(\norm_m)$ always exists by Fekete's lemma, but can be infinite in general. To get around this, one restricts to graded norms satisfying the following natural boundedness condition. Note that, by ampleness of $L$, there always exists a positive integer $r$ such that the subalgebra $R(X,rL)$ is generated in degree one. We say that a graded norm $\norm_\bullet$ is \textbf{finitely generated} if there exists such an integer $r$ such that for all positive integers $m$, $\norm_{rm}$ is the quotient norm of induced by $\norm_m$ along the (then surjective) morphism from the $m$-th symmetric powers of $H^0(rL)$ to $H^0(rmL)$. (We define, for a further discussion, a norm \textbf{generated in degree one} to be a finitely generated one with $r=1$, which exists if the algebra $R(X,L)$ is generated in degree one.) We then say that a graded norm is \textbf{bounded} if its distortion with respect to a finitely generated norm is at most exponential as a function of $m$. 

\bsni This growth condition allows us to define an important object: the \textbf{asymptotic spectral measure} of two bounded graded norms $\norm_\bullet$ and $\norm'_\bullet$. Again, we describe it in the case where $\field$ is discretely valued, and refer the reader to the second Section for details on the general case. Since any two diagonalizable norms are diagonalizable in the same basis, we may pick for each $m$ a basis $(s_{m,i})_i$ of $H^0(mL)$ jointly diagonalizing $\norm_m$ and $\norm'_m$, and define the relative spectral measure of $\norm_m$, $\norm'_m$ as the probability measure
$$\sigma_m(\norm_m,\norm'_m)=h^0(mL)\mi \sum_i \delta_{\lambda_{m,i}/m},$$
where $h^0(mL)=\dim H^0(mL)$, and the $\lambda_{m,i}$ are the $h^0(mL)$ elements, counted with multiplicities, of the \textbf{relative spectrum} of $\norm_m$ and $\norm'_m$:
$$\lambda_{m,i}=\log\frac{\norm'_m(s_{m,i})}{\norm_m(s_{m,i})}.$$
One then has from \cite{cmac} that the sequence $\sigma_m(\norm_m,\norm'_m)$ weakly converges to a boundedly supported probability measure, the aforementioned asymptotic spectral measure $\sigma(\norm_\bullet,\norm'_\bullet)$.

\bsni The first absolute moment of this measure defines a semidistance $d_1$ on the space $\mathcal{N}_\bullet(L)$ given for any two bounded graded norms $\norm_\bullet$, $\norm'_\bullet$ by
$$d_1(\norm_\bullet,\norm'_\bullet)=\lim_m (m\cdot h^0(mL))\mi \sum_i |\lambda_{m,i}|.$$
Identifying two norms at zero distance from each other yields an equivalence relation $\sim$ on this space, which may equivalently be characterized as follows: $\norm_\bullet\sim \norm'_\bullet\Leftrightarrow \sigma_\bullet(\norm_\bullet,\norm'_\bullet)=\delta_0$. 

\bsni Going back to pluripotential theory, we may now define the \textbf{asymptotic Fubini-Study operator} on $\mathcal{N}_\bullet(L)$ as the upper semi-continuous regularization
$$\norm_\bullet\mapsto \mathrm{usc}\, \lim_m \FS_m(\norm_m).$$
The existence of the pointwise is ensured by Fekete's lemma, thanks to the submultiplicativity condition and the boundedness of $\norm_\bullet$, and the usc regularization turns out to be plurisubharmonic provided the pair $(X,L)$ satisfies some mild condition, \textbf{continuity of envelopes}, conjectured to always hold as soon as $X$ is normal, and known to be true e.g. for line bundles on smooth varieties defined over a discretely or trivially valued field $\field$ of equal characteristic zero (the important case of the field of Laurent series $\cc((t))$ is such an example; a review of the currently known cases is presented in Example \ref{cpe}). 

\bigskip\noindent This operator maps bounded graded norms to the set of plurisubharmonic functions \textit{regularizable from below} $\mathrm{PSH^\uparrow}$, i.e. psh functions which are limits of increasing nets of Fubini-Study potentials. It is not injective. However, in \cite[Theorem C]{ber}, S. Boucksom and M. Jonsson prove that, if $\field$ is \textit{trivially valued} and of characteristic $0$, this operator descends to an injection from the space of \textit{bounded} graded norms modulo the equivalence relation $\sim$, onto $\mathrm{PSH^\uparrow}$. The main result of this article is a generalization of this statement.

\begin{theorem*}[A]
Assume $(X,L)$ to admit continuity of envelopes. The asymptotic Fubini-Study operator $\FS$ then defines a bijection:
$$\FS: \mathcal{N}_\bullet(L)/\sim\,\to \mathrm{PSH^\uparrow}(L).$$
\end{theorem*}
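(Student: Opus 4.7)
The strategy is to construct an explicit candidate inverse to $\FS$: given $\vp\in\PSH^\uparrow(L)$, define the graded supremum norm $\norm_\bullet^\vp$ by $\norm^\vp_m(s)=\sup_{X\an}|s|\cdot e^{-m\vp}$ for $s\in H^0(mL)$, and show that the assignment $\vp\mapsto\norm_\bullet^\vp$ is, modulo $\sim$, a two-sided inverse of $\FS$. First one must verify that $\norm_\bullet^\vp$ is indeed a bounded graded norm: submultiplicativity is immediate from the pointwise inequality $\sup|s\cdot t|e^{-(m+n)\vp}\leq \sup|s|e^{-m\vp}\cdot\sup|t|e^{-n\vp}$, and boundedness with respect to a finitely generated reference norm follows from the $L^\infty$-boundedness of $\vp$ (which holds by definition of $\PSH^\uparrow$).

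For surjectivity I would prove $\FS(\norm_\bullet^\vp)=\vp$ in two steps. When $\vp=\FS_k(\mu_k)$ is itself a Fubini-Study potential, the identity should follow by direct computation, unraveling the duality between evaluation of sections at points of $X\an$ and the Fubini-Study operator; at this stage one essentially recovers $\mu_k$ from $\norm_{mk}^\vp$ up to a controlled error going to zero with $m$. For a general $\vp\in\PSH^\uparrow$, write $\vp$ as an increasing net of Fubini-Study potentials $\vp_\alpha$. Since $\vp\mapsto\norm_\bullet^\vp$ and $\FS$ are both order-reversing, $\norm_\bullet^{\vp_\alpha}$ approximates $\norm_\bullet^\vp$ from above, and combining the Fubini-Study identity on each $\vp_\alpha$ with continuity of envelopes (used to commute the usc regularization with the increasing limit) one recovers $\vp$.

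For injectivity modulo $\sim$ one must show $\norm_\bullet^{\FS(\norm_\bullet)}\sim\norm_\bullet$. The plan is to establish a Chen–Maclean–type asymptotic pairing formula expressing $d_1(\norm_\bullet,\norm_\bullet')$ as an $L^1$-type integral of the form $\int_{X\an}|\FS(\norm_\bullet)-\FS(\norm_\bullet')|\,d\MA$ against an asymptotic Monge-Ampère measure, via the construction of the asymptotic spectral measure and a non-Archimedean pluripotential integration by parts. Since $\norm_\bullet$ and $\norm_\bullet^{\FS(\norm_\bullet)}$ share the same Fubini-Study image by the first step, this pairing formula forces $d_1(\norm_\bullet,\norm_\bullet^{\FS(\norm_\bullet)})=0$, proving the desired equivalence.

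The main obstacle is this last injectivity step over a \emph{general} non-Archimedean field. In the densely valued case diagonalizability of norms fails, so the entire spectral picture — and hence the pairing formula — must be handled through the filtration/scale-type approximations introduced in earlier sections of the paper, and one must verify that the Monge-Ampère pairing is stable under these approximations. Continuity of envelopes plays a crucial role at several points, in particular to upgrade pointwise identities on $X\an$ to genuine equalities in $\PSH^\uparrow(L)$ and to ensure that usc regularizations commute with the limit procedures needed to reduce to the Fubini-Study case.
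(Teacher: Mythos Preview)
Your surjectivity argument via the supnorm graded norm $\norm_\bullet^\vp$ is essentially the paper's (this is the operator $\bigN_\bullet$ there); the paper simply quotes the identity $\FS(\bigN_\bullet(\vp))=Q^*(\vp)$ from Boucksom--Eriksson rather than carrying out the Fubini--Study--then--increasing--limit reduction you sketch.

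The injectivity step, however, has a genuine gap. The pairing formula you propose, $d_1(\norm_\bullet,\norm'_\bullet)=\int_{X\an}|\FS(\norm_\bullet)-\FS(\norm'_\bullet)|\,d\MA$, is not correct and cannot be made correct: the Monge--Amp\`ere energy $E(\phi,\phi')$ is a sum of integrals of $\phi-\phi'$ against \emph{mixed} measures depending on both potentials, and even then Theorem~B only identifies $E$ with the \emph{signed} quantity $\vol$, not with $d_1$. There is no single ``asymptotic Monge--Amp\`ere measure'' against which $d_1$ is an $L^1$ distance, and ``integration by parts'' will not produce one. The paper's route is quite different: it first reduces $d_1$ to $\vol$ via the pointwise-maximum decomposition
\[
d_1(\norm_\bullet,\norm'_\bullet)=\vol(\norm_\bullet,\norm_\bullet\vee\norm'_\bullet)+\vol(\norm'_\bullet,\norm_\bullet\vee\norm'_\bullet),
\]
valid because $d_1=\vol$ in the ordered case; then applies Theorem~B to each summand; then identifies $\FS(\norm_\bullet\vee\norm'_\bullet)$ with the envelope $Q(\FS(\norm_\bullet)\wedge\FS(\norm'_\bullet))$; and finally invokes an energy-rigidity result (if $\phi\geq\psi$ and $E(\phi,\psi)=0$ then $\phi=\psi$). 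None of these ingredients appears in your outline. Note also that your sketch does not address well-definedness of $\FS$ on the quotient (the implication $\norm_\bullet\sim\norm'_\bullet\Rightarrow\FS(\norm_\bullet)=\FS(\norm'_\bullet)$), which is half of the bijection statement and in the paper falls out of the same energy identity.
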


\noindent The main ingredient in the proof of Theorem A is the following Theorem, which builds on the main result of \cite{ber} and generalizes it:

\begin{theorem*}[B]With the hypotheses of Theorem A, and given two bounded graded norms $\norm_\bullet$, $\norm'_\bullet$, we have that
$$E(\FS(\norm_\bullet),\FS(\norm'_\bullet))=\vol(\norm_\bullet,\norm'_\bullet).$$
\end{theorem*}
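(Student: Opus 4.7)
The strategy I would adopt is a density-and-continuity argument: prove the identity first for finitely generated graded norms, then extend by continuity in the $d_1$-semidistance. The two ingredients are approximation of an arbitrary bounded graded norm by finitely generated ones, and $d_1$-continuity of both sides of the claimed identity.

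For continuity, the right-hand side $\vol(\norm_\bullet,\norm'_\bullet)$ is essentially the first moment of the asymptotic spectral measure $\sigma(\norm_\bullet,\norm'_\bullet)$, so it is $d_1$-Lipschitz by weak convergence of spectral measures together with the uniform support bounds provided by boundedness. For the left-hand side, one uses $d_1$-continuity of the Monge-Ampère energy $E$ on $\mathrm{PSH^\uparrow}(L)$ (presumably established earlier in the paper) combined with continuity estimates for $\FS$ itself, which follow from the submultiplicativity and the quantitative form of Fekete's lemma.

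Since equivalence classes of finitely generated graded norms are $d_1$-dense in $\mathcal{N}_\bullet(L)/\sim$, it then suffices to treat finitely generated pairs. By passing to a Veronese subalgebra $R(X,rL)$, one further reduces to norms generated in degree one, where $\FS(\norm_\bullet)$ coincides up to normalization with the single-step potential $\FS_1(\norm_1)$ and the spectral measure reduces to the normalized counting measure of the joint relative spectrum of $(\norm_1,\norm'_1)$ on $H^0(L)$. The identity then becomes a finite-dimensional statement comparing an intersection-theoretic energy of Fubini-Study potentials with a trace of log-ratios of norms, which in the trivially valued setting is exactly the content of \cite[Theorem B]{ber}, and which should go through with only notational changes once the framework of Section 2 is in place.

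The main obstacle is that over a densely valued field, two norms on $H^0(L)$ cannot in general be simultaneously diagonalized in a common basis, so the combinatorial spectral manipulations underlying \cite{ber} do not apply verbatim. I would address this by working with $\varepsilon$-orthogonal bases, which are available in an arbitrary non-Archimedean Banach space, and showing that both the intersection-theoretic energy and the trace change only by an $O(\varepsilon)$ error under such approximate diagonalization; the identity then follows by letting $\varepsilon\to 0$. An alternative route, which might be cleaner if the setup of Section 2 supports it, is to perform a base change to a valued extension $\field'/\field$ (for instance a spherically complete one) in which diagonalization is available, and to verify that both $E\circ\FS$ and $\vol$ are invariant under such base change before descending the identity.
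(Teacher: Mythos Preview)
Your outline has two genuine gaps.

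\textbf{First gap: the ``finite-dimensional'' reduction for norms generated in degree one is false.} It is true that $\FS(\norm_\bullet)=\FS_1(\norm_1)$ when $\norm_\bullet$ is generated in degree one, but the asymptotic spectral measure $\sigma(\norm_\bullet,\norm'_\bullet)$ is \emph{not} the counting measure on the relative spectrum of $(\norm_1,\norm'_1)$: it depends on the relations in $R(X,L)$ through the Okounkov body and the Chebyshev transforms. For a concrete obstruction, take $X=\mathbb{P}^1$, $L=\mathcal{O}(2)$, $\norm_1$ trivial, and $\norm'_1$ diagonal on $(x^2,xy,y^2)$ with weights $(a,b,c)$ satisfying $2b<a+c$. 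Then $\vol(\norm_1,\norm'_1)=(a+b+c)/3$, whereas the asymptotic volume is $(a+c)/2$ (the term $xy$ becomes asymptotically irrelevant because $(xy)^2=x^2\cdot y^2$). So the identity for norms generated in degree one is \emph{still} an asymptotic statement; it does not reduce to a trace formula on $H^0(L)$, and the argument of \cite{ber} does not collapse to something finite-dimensional either.

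\textbf{Second gap: the $d_1$-continuity of $E\circ\FS$ is circular.} You write that $d_1$-continuity of the Monge--Amp\`ere energy is ``presumably established earlier''. It is not: the only bridge between the $d_1$-semidistance on $\mathcal{N}_\bullet(L)$ and energies on $\PSH^\uparrow(L)$ is precisely Theorem~B (the formula $d_1(\norm_\bullet,\norm'_\bullet)=E(\FS(\norm_\bullet),Q(\FS(\norm_\bullet)\wedge\FS(\norm'_\bullet)))+\cdots$ is a \emph{corollary} of Theorems~A and~B). Without Theorem~B you have no control on how $\FS$ or $E$ behave under $d_1$-perturbations, so a density-in-$d_1$ argument cannot get off the ground.

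\textbf{What the paper actually does.} Your suggestion to base-change to an algebraically closed (hence densely valued) extension is exactly right and is the paper's first step. After that, the paper works with $d_\infty$ rather than $d_1$: for norms generated in degree one, it approximates $\norm_1$ by a \emph{lattice} norm (density of lattice norms in the densely valued case), builds from it a model $(\mathcal{X},\mathcal{L})$ via a Proj construction so that $\norm_{H^0(\bullet\mathcal{L})}$ is eventually generated in degree one, and then invokes the Boucksom--Eriksson identity $E(\phi_\mathcal{L},\phi_\mathcal{M})=\vol(\norm_{H^0(\bullet\mathcal{L})},\norm_{H^0(\bullet\mathcal{M})})$ for model metrics. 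Both $E$ and $\vol$ are $d_\infty$-Lipschitz, so this passes to the limit. For general bounded graded norms, the paper approximates by the norms $\norm_\bullet^{(k)}$ generated in degree one by $\norm_k$; on the energy side it uses \emph{monotone} convergence of $E$ along the decreasing net $k\mapsto\FS_k(\norm_k)$, and on the volume side it uses Theorem~C (an Okounkov-body/Fujita-type argument) to show $\vol(\norm_\bullet^{(k)},\norm'_\bullet{}^{(k)})\to\vol(\norm_\bullet,\norm'_\bullet)$. No $d_1$-continuity is needed anywhere.
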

\noindent A similar result has been proved, again in the characteristic $0$, trivially valued case, in \cite{ber}.

\bigskip

\noindent The term on the left-hand side is the \textbf{relative Monge-Ampère energy} between two continuous psh functions, which exists thanks to the theory of non-Archimedean Monge-Ampère operators of A. Chambert-Loir and A. Ducros (\cite{cld}), and has been studied for example in \cite{boueri}. 

\bsni The volume, on the right-hand side, is the first moment of the asymptotic spectral measure $\sigma(\norm_\bullet,\norm'_\bullet)$.

\bsni It is a generalization of the relative volumes of balls studied in \cite{bberballs}, and Theorem B is closely related to the results of the latter article. In the aforementioned article, where the base field is $\cc$, one considers two specific norms on spaces of $m$-sections of $L$: having fixed a Hermitian metric $e^{-\phi}$ on $L$, a non-pluripolar compact set $K\subseteq X$, and a probability measure $\mu$ supported on $K$, those are the $L^2$-norm
$$\norm_{2,m\phi}(s)=\left(\int_X |s|^2_{m\phi}\,d\mu\right)^{\frac12}$$
and the $L^\infty$-norm
$$\norm_{\infty,m\phi}(s)=\sup_K |s|_{m\phi}.$$
H. Chen and C. Maclean have studied volumes more generally in \cite{cmac}, and indeed the proof of Theorem B uses their techniques. They rely on \textbf{Okounkov bodies}, which are convex bodies associated to a semigroup, reflecting its asymptotic properties. Originally introduced in \cite{oko}, they have first been studied extensively in \cite{kko} and \cite{lazmus}, and the reader may find a summary of their most important properties in \cite{boubbk}. 

\bigskip

The most important part of the proof of Theorem B, and also of interest as a stand-alone result, is the following Theorem stating that we can recover the volume of two bounded graded norms by approximating it with volumes associated to simpler graded norms:

\begin{theorem*}[C]Let $L$ be such that $R(X,L)$ is generated in degree one. Let $\norm_\bullet$, $\norm'_\bullet$ be two bounded graded norms on $L$, and for each $k\in\nn^*$, let $\norm_\bullet^{(k)}$ and $\norm'_\bullet{}^{(k)}$ denote the graded norms generated in degree one by $\norm_k$ and $\norm'_k$ respectively. Then, we have that:
$$\vol(\norm_\bullet^{(k)},\norm'_\bullet{}^{(k)})\to_{k\to\infty}\vol(\norm_\bullet,\norm'_\bullet).$$
\end{theorem*}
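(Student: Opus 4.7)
The plan is to translate the volume comparison into an integral on the Okounkov body of $L$, following the approach of \cite{cmac}. Under the assumption that $R(X,L)$ is generated in degree one, pick an admissible flag on $X$ giving an Okounkov body $\Delta = \Delta(L) \subseteq \rr^n$ with $n = \dim X$. To any bounded graded norm $\norm_\bullet$, this machinery attaches a bounded concave function $G_{\norm_\bullet} \colon \Delta \to \rr$, obtained as the asymptotic concavification of the level-$m$ data read off from the relative spectrum of $\norm_m$ via the flag. The key identity to invoke is
\begin{equation*}
\vol(\norm_\bullet, \norm'_\bullet) = \frac{1}{\vol(\Delta)} \int_\Delta \bigl( G_{\norm'_\bullet} - G_{\norm_\bullet}\bigr)\, d\mathrm{Leb},
\end{equation*}
with the analogous formula for the pair $(\norm_\bullet^{(k)}, \norm'_\bullet{}^{(k)})$. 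Step one is thus to set up this Okounkov-body description in the generality of the paper (in particular, over a possibly densely valued $\field$).

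Step two is to describe the concave transform $G^{(k)} := G_{\norm_\bullet^{(k)}}$ explicitly: since $\norm_\bullet^{(k)}$ is, by construction, the maximal submultiplicative graded extension of $\norm_k$ (viewed as a degree-one datum on the re-indexed algebra $R(X,kL)$), its level-$km$ norm is precisely the quotient norm from the $m$-fold symmetric power of $(H^0(kL), \norm_k)$. Passing to the limit in $m$, $G^{(k)}$ is the concave envelope on $\Delta$ of the rescaled function $\tfrac{1}{k}G_{\norm_k}$ defined on the level-$k$ sampling points. Submultiplicativity forces $\tfrac{1}{k}G_{\norm_k} \leq G_{\norm_\bullet}$ at each sample, and since $G_{\norm_\bullet}$ is already concave, one obtains $G^{(k)} \leq G_{\norm_\bullet}$ pointwise on $\Delta$.

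The heart of the argument is then the convergence $G^{(k)} \to G_{\norm_\bullet}$ a.e.\ on $\Delta$ as $k \to \infty$. The sampling points at level $k$ become equidistributed in $\Delta$, and by the defining Fekete limit of $G_{\norm_\bullet}$, one has $\tfrac{1}{k} G_{\norm_k}(\alpha_k) \to G_{\norm_\bullet}(\alpha)$ whenever $\alpha_k \to \alpha$ lies in the interior of $\Delta$. A standard approximation argument for concave functions on a convex body then yields $G^{(k)} \to G_{\norm_\bullet}$ pointwise a.e.\ on the interior, and analogously for the primed norms. The boundedness hypothesis provides a uniform $L^\infty$ bound on the $G^{(k)}$ and $G'{}^{(k)}$, so dominated convergence gives the convergence of the integrals, which is the claim.

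The main obstacle is this a.e.\ convergence $G^{(k)} \to G_{\norm_\bullet}$: controlling the concave envelopes on finer and finer grids is delicate, particularly near $\partial \Delta$ where the limit concave function may be discontinuous. The uniform boundedness of the $G^{(k)}$ reduces the problem to pointwise convergence on the interior, but the sampling-and-concavification procedure must still be set up carefully. A related technical point is that over a densely valued $\field$, norms are in general not jointly diagonalizable, so the Chen--Maclean concave-transform machinery must be combined with the sup-approximation by diagonalizable norms developed in Section~2.
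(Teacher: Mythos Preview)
Your framework is the same as the paper's---Okounkov bodies and the Chen--Maclean concave transform---and the integral identity you quote is exactly the bridge the paper uses (in the equivalent form $\vol(\norm_\bullet,\norm'_\bullet)=\int\lambda\,d\mu(\norm_\bullet)-\int\lambda\,d\mu(\norm'_\bullet)$, where $\mu(\norm_\bullet)$ is the pushforward of normalized Lebesgue measure by $G_{\norm_\bullet}$). The divergence is in how the limit in $k$ is taken.

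You aim for pointwise a.e.\ convergence $G^{(k)}\to G_{\norm_\bullet}$ on $\Delta$ and then invoke dominated convergence. The paper instead proves weak convergence of the pushforward measures $\mu(\norm_\bullet^{(k)})\rightharpoonup\mu(\norm_\bullet)$ directly, by showing convergence of the cumulative distribution functions: for each $t$, it proves $\vol\big(\Delta(\Gamma^{\Phi_k,\geq t}_{k\bullet})\big)\to\vol\big(\Delta(\Gamma^{\Phi,\geq t}_\bullet)\big)$ via a short Fujita-type lemma (any compact in the interior of $\Delta(\Gamma_\bullet)$ is eventually contained in the rescaled $\Delta(\Gamma^k_\bullet)/k$). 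This is weaker than what you try to prove, but it is enough for the first moment and it completely avoids the boundary and concave-envelope issues you flag.

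The gap in your route is Step two: the assertion that $G^{(k)}$ equals the concave envelope on $\Delta$ of the rescaled level-$k$ samples $\tfrac1k\Phi(k,\cdot)$ is plausible but not established. The concave transform is defined through the super-level semigroups $\Gamma^{\Phi_k,\geq t}$, and identifying the resulting function with a discrete concave hull requires knowing the exact behaviour of the quotient norms on graded pieces at higher levels---which is precisely the delicate part. The paper never needs this identification: it only uses the three semigroup inclusions $\Gamma^k_1=\Gamma_k$, $\Gamma^k_r\subseteq\Gamma_{kr}$, and $\Phi_k\leq\Phi$ on $\Gamma_{k\bullet}$, which follow straight from the definitions, and then feeds these into the Fujita lemma. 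Your diagonalizability concern is handled in the paper by passing to an algebraically closed (hence densely valued) ground-field extension, under which volumes are invariant.
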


\noindent The reason why this result is useful is that graded norms generated in degree one are simpler to study: their asymptotic behaviour is governed by that of the algebra of sections $R(X,L)$, and by the norm $\norm_1$ on $H^0(L)$.

\bsni The proof of Theorem B also relies on the theory of models of varieties over the valuation ring: to any globally generated \textbf{model} $(\mathcal{X},\mathcal{L})$ of $(X,L)$ as described in e.g. \cite{boueri} is associated a \textbf{lattice norm} on $H^0(L)$, that is, a norm for which there exists an ortho\textit{normal} basis in the non-Archimedean sense. Using a specific construction of such a model $\mathcal{L}$, one obtains a graded norm for which Theorem B essentially holds, thanks to the results of \cite{boueri}. We then pass from this specific case, to the more general case of norms generated in degree one, by approximation of volumes: namely, the set of lattice norms is dense in the set of norms on a fixed vector space, for the $\sup$ (or $d_\infty$) distance. One can then create an approximation by norms "almost" generated in degree one (see Remark \ref{eventuallygdoneremark}), and use Lipschitz continuity of volumes with respect to this distance. Finally, an application of Theorem C concludes the proof.

\paragraph{Plan.} In the \textbf{first section}, we briefly review non-Archimedean fields and Berkovich spaces, then study in depth spaces of norms on vector spaces over \textit{non-trivially valued} non-Archimedean fields. We rely on results from \cite{ber} and \cite{boueri}.

\bigskip

The \textbf{second section} is dedicated to pluripotential theory in the non-Archimedean world, and models, following \cite{boueri}.

\bigskip

The asymptotic properties of spaces of norms are developed in the \textbf{third section}.

\bigskip

In the \textbf{fourth section} are described convex asymptotics, in particular the theory of Okounkov bodies. We show existence of the limit spectral measure, then prove Theorem C. Some references include \cite{boueri}, \cite{boubbk}, \cite{cmac}, while drawing parallels with some constructions of \cite{dwn}. The reader is also invited to consult \cite{boucksomchen}.

\bigskip

In the \textbf{fifth section}, we first prove Theorem B as announced, then follow the steps of \cite{ber} to prove Theorem A.

\section*{Notation.}

$\field$ and $\mathbb{L}$ always denote fields, where our fields are assumed to be commutative.

\bsni A \textbf{variety} $X$ over $\field$ is a geometrically integral, separated scheme, of finite type over $\field$.

\bigskip\noindent Given $L$ a line bundle over a $\field$-variety $X$, $H^0(X,L)$ is identified with the space of sections $\Gamma(X,L)$ of $L$ over $X$. When no confusion can arise, we will often write $H^0(L)$. $h^0(X,L)$ is the $\field$-dimension of the $\field$-vector space $H^0(X,L)$. 

\bigskip

\noindent Given a $\field$-vector space $V$, and a positive integer $n$, $V^{\otimes n}$ refers to the $n$-th tensor power of $V$; $V^{\odot n}$ to the $n$-th symmetric power of $V$; and $V^{\wedge n}$ to the $n$-th exterior power of $V$.

\bigskip

\noindent $\norm$ always refers to a norm on a vector space.

\bsni Unless stated otherwise, $d$ refers to the dimension of some vector space, or of some variety $X$, whichever is natural in context.

\newpage

\section{Norms.}

\subsection{Non-Archimedean fields.}

\begin{defi}\label{defabsvalue}Let $\field$ be a field. A real-valued \textbf{non-Archimedean absolute value} on $\field$ is a function
$$|\cdot|:\field\to \rr_+$$
satisfying the following properties:
\begin{itemize}
\item[(i)] $|0|=0$, $|1|=1$;
\item[(ii)] $|xy|=|x|\cdot|y|$, for $x$, $y\in \field$;
\item[(iii)] $|x+y|\leq\max\{|x|,|y|\}$, for $x$, $y\in \field$.
\end{itemize}
We say that $\field$ is:
\begin{itemize}
\item \textbf{trivially valued} if $|x|=1$ for all $x\neq 0$;
\item \textbf{discretely valued} if $\field$ is not trivially valued, and the set of values of $|\cdot|$ is a discrete subgroup of $\rr_+$;
\item \textbf{densely valued} otherwise (we recall that a subgroup of $\rr_+$ is either discrete or dense).
\end{itemize}
\end{defi}
Note that a real-valued non-Archimedean absolute value enriches $\field$ with a topology.
\begin{defi}Let $\field$ be a field with a real-valued non-Archimedean absolute value. We say that $\field$ is a \textbf{non-Archimedean field} if it is Cauchy complete with respect to the topology induced by its absolute value.
\end{defi}

The reader may find a review of non-Archimedean valued fields and their finer completeness properties in \cite{comisha}.

To any non-Archimedean field $\field$, one may associate the following objects:
\begin{itemize}
\item its \textbf{valuation ring} $\field^\circ$, defined as the set of elements in $\field$ with absolute value $\leq 1$;
\item the \textbf{maximal ideal} of $\field^\circ$, $\field^{\circ\circ}$, characterized as the set of elements in $\field^\circ$ (or $\field$) with absolute value $<1$;
\item its \textbf{residue field} $\tilde{\field}=\field^\circ/\field^{\circ\circ}$.
\end{itemize}

\begin{defi}
Let $\field$ be a non-Archimedean field. If $$\charac{\field}=\charac{\tilde{\field}}=p$$ for some $p$, we say that $\field$ is of \textbf{equicharacteristic} $p$; otherwise, i.e. $\charac{\field}=0$ and $\charac{\tilde\field}=p\geq 2$, we say that $\field$ has \textbf{mixed characteristic}.
\end{defi}

\begin{remark}\label{hahnfield} By \cite{comisha}, a non-trivially valued non-Archimedean field may always be embedded into a field which satisfies the following properties:
\begin{itemize}
\item densely valued;
\item Cauchy complete;
\item algebraically closed.
\end{itemize}
Indeed, let $\hat{\field}$ be the metric completion of the algebraic closure of $\field$. Then $\hat{\field}$ is in fact such a field.
\end{remark}

\subsection{Berkovich analytification.}

Fix a non-Archimedean field $(\field,|\cdot|)$.

\bsni In the complex case, briefly, there exists an \textit{analytification functor} sending a complex projective variety $(X,\oo_X)$ to its associated complex analytic space $(X^{an},\oo_{X^{an}})$, and the categories of coherent sheaves over $X$ and $X^{an}$ are equivalent. This principle fails a priori in the non-Archimedean case when one applies the same constructions \textit{mutatis mutandis}. The category of Berkovich spaces, which we will not describe here but in its most simple cases, serves as a "correct" category for a non-Archimedean GAGA principle. The essential result is the following theorem:

\begin{theorem}Let $(X,\oo_X)$ be a projective variety over $\field$. The analytification functor yields the \textbf{Berkovich analytification} ($X\an,\oo_{X\an})$, which has the following topological properties:
\begin{itemize}
\item connectedness;
\item compactness;
\item Hausdorff.
\end{itemize}
\end{theorem}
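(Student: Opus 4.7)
The plan is to set up $X\an$ via its local affinoid charts and then transfer each of the three topological properties from the local model to the global projective object. First I would recall that, for an affine scheme $\Spec A$ of finite type over $\field$, the set $(\Spec A)\an$ consists of the multiplicative seminorms $x\colon A\to\rr_+$ that restrict to $|\cdot|$ on $\field$, equipped with the coarsest topology making every evaluation $x\mapsto x(f)$, $f\in A$, continuous. For a general projective $X$, one embeds $X\hookrightarrow\pp^n_\field$ and glues the analytifications of the standard affine charts of $\pp^n_\field$ restricted to $X$.

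Hausdorffness is the most direct of the three: two distinct seminorms $x\neq y$ lying in some common affine chart must differ on some $f\in A$, and the continuity of the real-valued function $|f|$ then separates them by preimages of disjoint open subsets of $\rr_+$. Since this argument is local and open affine subsets cover $X\an$, Hausdorffness passes to the whole analytification.

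For compactness the key input is the affinoid case: for any $\field$-affinoid algebra $\mathcal{A}$ the Berkovich spectrum $\mathcal{M}(\mathcal{A})$ is compact Hausdorff, via a Tychonoff-style argument that realizes it as the closed subspace of $\prod_{f\in\mathcal{A}}[0,\|f\|]$ cut out by the multiplicativity and ultrametric conditions. The analytification $(\pp^n_\field)\an$ is covered by the $n+1$ standard closed unit polydiscs $\mathcal{M}\bigl(\field\{x_0/x_i,\dots,x_n/x_i\}\bigr)$ glued along affinoid overlaps, hence is a finite union of compacta and compact. Since the analytification of the closed immersion $X\hookrightarrow\pp^n_\field$ is itself a closed topological embedding, $X\an$ inherits compactness as a closed subspace.

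Connectedness is where the assumption that $X$ is a variety (i.e.\ geometrically integral, hence in particular irreducible) enters. After base-change to the completion of the algebraic closure of $\field$ described in Remark~\ref{hahnfield}, $X$ remains irreducible, and one is reduced to the classical statement that the Berkovich analytification of an irreducible projective variety over an algebraically closed, complete non-Archimedean field is connected; this ultimately rests on the path-connected structure of affinoid Berkovich spectra via deformation retracts onto suitable skeleta. Connectedness then descends from the base-changed space along the surjective continuous quotient map. The main obstacle will be the compactness step, where one really needs the Tychonoff input for the closed unit polydisc with no direct analogue of the Archimedean compactness of $[0,1]^n$, together with the connectedness step, where the non-Archimedean geometry forces one to invoke the nontrivial deformation-retract structure of analytic varieties rather than any elementary path-connectedness argument.
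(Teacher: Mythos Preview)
The paper does not prove this theorem: it is stated as background and the reader is referred to Berkovich's book \cite{berko} and to \cite{poi}. So there is no ``paper's proof'' to compare against; what you have written is a free-standing sketch, and it is worth commenting on its content.

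Your compactness argument is essentially the standard one and is fine as a sketch.

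Your Hausdorffness argument has a genuine gap. You separate two points lying in a \emph{common} affine chart, and then conclude by saying ``this argument is local and open affine subsets cover $X\an$''. But Hausdorffness is not a local property: a space can be covered by Hausdorff opens without being Hausdorff (the line with two origins). You must either (a) use that $X$ is quasi-projective, so any two closed points---and in fact any two points of $X\an$---lie in a common affine open, or (b) argue globally via separatedness: the diagonal $X\to X\times_\field X$ is a closed immersion, analytification sends closed immersions to closed topological embeddings, hence $X\an\to X\an\times X\an$ has closed image, i.e.\ $X\an$ is Hausdorff. Either fix is short, but as written the step does not go through.

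Your connectedness argument is correct in outline but vastly over-engineered. Deformation retracts onto skeleta are deep results from Berkovich's later work and are not needed here. The argument in \cite{berko} is much more elementary: one shows directly that for an integral $\field$-algebra $A$ of finite type, $(\Spec A)\an$ is connected (for instance via the existence of a Gauss-type point dominating all others, or via the surjective map $(\Spec A)\an\to\Spec A$ onto a connected space with connected fibers), and then glues over an affine cover of the irreducible scheme $X$, where consecutive charts overlap. The base change to an algebraically closed field is also unnecessary for this statement. Invoking skeleta here is a bit like proving $[0,1]$ is connected by first showing it is contractible via Morse theory.
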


The reader is invited to consult the book of V. Berkovich \cite{berko}, or for example \cite{poi}, for an exposition of the theory.

\begin{defi}Let $R$ be any ring. A \textbf{multiplicative seminorm} on $R$ is a function which satisfies the properties (i), (ii), and (iii) of Definition \ref{defabsvalue}.
\end{defi}

\begin{example}[Affine $X\an$]Assume $X=\mathrm{Spec}\, \mathcal{A}$, where $\mathcal{A}$ is an algebra of finite type over $\field$.
\begin{itemize}
\item the set of points of $X\an$ is the set of multiplicative seminorms $|\cdot|_\mathcal{A}$ on $\mathcal{A}$, such that
$$|a|_\mathcal{A}=|a|$$
for all $a\in \field$, that is, $|\cdot|_\mathcal{A}$ extends the absolute value on $\field$;
\item the topology on $X\an$ is the topology of pointwise convergence.
\end{itemize}
In general, given any variety over $\field$, one proceeds by gluing local spaces as constructed above, similarly to the construction of abstract schemes by gluing affine schemes.
\end{example}

\begin{remark}The general construction also applies to Archimedean fields, and in the complex case, the Berkovich analytification of $X$ coincides with the complex analytic space $X\an$.
\end{remark}

\subsection{Spaces of norms.}

Throughout this section, unless otherwise specified, we fix a non-Archimedean field $\field$, with absolute value $|\cdot|$; and a finite-dimensional vector space $V$ over $\field$. Let $d=\dim V$. We follow \cite[Part 1]{boueri}.

\begin{defi}A \textbf{norm} on $V$ is a function
$$\norm:V\to \rr_+,$$
satisfying the following properties:
\begin{itemize}
\item $\norm(v)=0$ if and only if $v=0_V$;
\item $\norm(\lambda\cdot v)=|\lambda|\cdot\norm(v)$, for $\lambda\in \field$, $v\in V$;
\item $\norm(v+w)\leq\max\{\norm(v),\norm(w)\}$, for $v$, $w\in V$.
\end{itemize}
We denote by
$$\mathcal{N}(V)$$
the set of norms on $V$.
\end{defi}

\noindent Norms on a non-Archimedean vector space are stable under the (pointwise) maximum operation, which we denote
$$\norm\vee\norm'=\max(\norm,\norm'),$$
for any two norms $\norm$, $\norm'\in\mathcal{N}(V)$.

\begin{defi}A norm $\norm\in\nv$ is \textbf{diagonalizable} if there exists a basis $(e_1,\dots,e_d)$ of $V$ such that, for all
$$v=\sum v_i e_i,$$
with $v_i\in\field$ for all $i$, we have that
$$\norm(v)=\max_i |v_i|\cdot \norm(e_i).$$
We say that it is a \textbf{lattice norm}, or a \textbf{pure diagonalizable norm}, if, for all $i$, $\norm(e_i)=1$.
We define
$$\mathcal{N}^{\diag}(V),\,\mathcal{N}^{\latt}(V)$$
as respectively the set of diagonalizable norms and the set of lattice norms on $V$.
\end{defi}

\noindent It is also common practice to use the terminology of \textbf{cartesian bases}, see e.g. \cite[Ch. 2]{bgr}.

\begin{remark}
We define a \textbf{lattice} of $V$ as a finite submodule $L$ of $V$ over $\field^o$, such that $L\otimes_{\field^o}\field=V$. In particular, the unit ball of a lattice norm is always a lattice. 
\end{remark}

\begin{remark}One may define norms as follows.
\begin{itemize}
\item let $W\subset V$ be a subspace; any norm $\norm$ induces a quotient norm $\norm_{V/W}$ on $V/W$;
\item a norm $\norm$ on $V$ induces a norm $\norm^{\otimes n}$ on any tensor power $V^{\otimes n}$ of $V$.
\end{itemize}
We may combine these constructions. Let $n$ be an integer, $\lambda$ be a partition of $n$, and let $\mathbb{S}^\lambda$ denote the Schur functor associated to $\lambda$. Then a norm $\norm\in\nv$ defines a norm $\norm^\lambda\in\mathcal{N}(\mathbb{S}^\lambda(V))$, as this vector space is a composition of quotients of tensor products. In particular, $\norm\in\nv$ induces:
\begin{itemize}
\item a norm $\norm^{\wedge n}$ on the $n$-fold exterior product $V^{\wedge n}$;
\item a norm $\norm^{\odot n}$ on the $n$-fold symmetric product $V^{\odot n}$.
\end{itemize}
\end{remark}

More can be said about the structure of $\nv$. To any basis $\{e_1,\dots,e_d\}$ of $V$, one may associate three objects. Fix one such basis.

\begin{defi}The \textbf{apartment} associated to the basis $(e_i)$ is the following set of diagonalizable norms:
$$\mathcal{N}^{\diag}(V)\supset\mathbb{A}_{(e_i)}=\{\norm\in\mathcal{N}^{\diag}(v),\,\norm\text{ is diagonalizable in the basis }(e_i)\}.$$
We then define the \textbf{injection map} associated to $(e_i)$:
$$i_{(e_i)}:\rr^d\to \mathcal{N}^{\diag}(V),$$
sending a real vector $(r_i)$ to the unique norm $\norm$ in $\mathbb{A}_{(e_i)}$ satisfying
$$\norm(e_i)=e^{-r_i}$$
for all $i$.
\end{defi}

\subsection{Spectra.}

We set $\norm$, $\norm'\in\nv$. We introduce the notion of (relative) spectrum of two norms, and its associated spectral measure. These constructions will enable us to define easily many operations on tuples of norms, such as distances, which will be useful in further study of spaces of norms.

\begin{defi}The \textbf{relative spectrum} of $\norm$ and $\norm'$ is the set (with multiplicities) $\Sp(\norm,\norm')$ which contains all reals of the form
$$\lambda_i(\norm,\norm')=\sup_{W\in \bigcup_{i\leq r \leq \dim V} \Gr_\field(r,V)} \inf_{w\in W-\{0\}}[\log \norm'(w) - \log \norm(w)],$$
where
$$\Gr_\field(r,V)$$
denotes the $r$-th Grassmannian of $V$.
\end{defi}

\begin{remark}By \cite[P2.24]{boueri}, if both norms are diagonalizable by a basis $(s_i)$, ordered such that $$i>j \Rightarrow \frac{\norm'(s_i)}{\norm(s_i)} \geq \frac{\norm'(s_j)}{\norm(s_j)},$$
then
$$\lambda_i(\norm,\norm')=\log \norm'(s_i)-\log \norm(s_i).$$
\end{remark}

\begin{defi}The \textbf{relative spectral measure}
$$\sigma(\norm,\norm')$$
of $\norm$ and $\norm'$ is defined to be discrete probability measure supported on $\Sp(\norm,\norm')$, that is:
$$\sigma(\norm,\norm')=d\mi \sum \delta_{\lambda_i(\norm,\norm')},$$
where we recall that $d=\dim_\field V$.
\end{defi}

\begin{defi}Let $p\in[1,\infty)$. The $d_p$-distance of $\norm$ and $\norm'$ is defined by:
$$d_p(\norm,\norm')^p=\int_\rr |\lambda|^p\,d\sigma(\norm,\norm').$$
We furthermore define
$$d_\infty(\norm,\norm')=\max \Sp(\norm,\norm').$$
In this paper, we will only use the distances $d_1$ and $d_\infty$, which have more practical expressions:
$$d_\infty(\norm,\norm')=\sup_{v\in V-\{0\}}|\log\norm'(v)-\log\norm(v)|,$$
and
$$d_1(\norm,\norm')=d\mi\sum \lambda_i(\norm,\norm'),$$
where $d=\dim_\field V$.
\end{defi}

\begin{remark}[Important characterization of the distance $d_\infty$]\label{dinftydistorsion}
The distance $d_\infty(\norm,\norm')$ is equivalently characterized as the \textbf{best constant} $C>0$ such that for all $v\in V$,
$$e^{-C}\norm'(v)\leq \norm(v) \leq e^C \norm(v).$$
\end{remark}

\bigskip

Closely related to the distance $d_1$ (see Theorem \ref{volumeslog} below), the following definition generalizes ratios of volumes of balls of holomorphic sections, originally studied in \cite{bberballs}.

\begin{defi}The \textbf{relative volume} of $\norm$ and $\norm'$ is defined by:
$$\vol(\norm,\norm')=\int_\rr \lambda\,d\sigma(\norm,\norm'),$$
that is: the mean value of the relative spectrum of those norms. Note that we normalize by $d$ (by our definition of the relative measure), while the authors in \cite[T2.25]{boueri} do not.
\end{defi}

\begin{remark}In particular, if $\norm\leq\norm'$, then
$$\vol(\norm,\norm')=d_1(\norm,\norm'),$$
and, reversing the inequality, we obtain
$$-\vol(\norm,\norm')=d_1(\norm,\norm').$$
\end{remark}

\begin{theorem}[{\cite[T2.25]{boueri}}]\label{volumeslog}Let $\{e_1,\dots,e_d\}$ be a basis of $V$. We have that
$$\vol(\norm,\norm')=\frac{1}{d}\left[\log{\norm'}^{\wedge d}(e_1\wedge\dots\wedge e_n) - \log\norm^{\wedge d}(e_1\wedge\dots\wedge e_n)\right]$$
\end{theorem}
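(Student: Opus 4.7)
The plan is to first show that the right-hand side is basis-independent, then establish the identity when $\norm$ and $\norm'$ are jointly diagonalizable, and finally reduce to this case.

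\textbf{Basis independence.} If $(e_i)$ and $(e'_i)$ are related by $e'_i = \sum_j A_{ij} e_j$ with $A \in GL_d(\field)$, then $e'_1 \wedge \dots \wedge e'_d = \det(A)\cdot e_1 \wedge \dots \wedge e_d$, and the homogeneity axiom for norms gives
$$\log \norm^{\wedge d}(e'_1 \wedge \dots \wedge e'_d) - \log \norm^{\wedge d}(e_1 \wedge \dots \wedge e_d) = \log|\det A|.$$
The same holds for ${\norm'}^{\wedge d}$, so the two contributions cancel in the log-difference; hence the right-hand side depends only on $\norm, \norm'$.

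\textbf{Jointly diagonalizable case.} Suppose $\norm$ and $\norm'$ are simultaneously diagonalizable in a basis $(s_i)$. By construction of the tensor-power norm and the quotient norm defining $\norm^{\wedge d}$, one has $\norm^{\wedge d}(s_1 \wedge \dots \wedge s_d) = \prod_i \norm(s_i)$, and similarly for $\norm'$. Using the remark after Definition of the relative spectrum, which identifies $\lambda_i(\norm,\norm') = \log \norm'(s_i) - \log \norm(s_i)$, we obtain
$$\log {\norm'}^{\wedge d}(s_1 \wedge \dots \wedge s_d) - \log \norm^{\wedge d}(s_1 \wedge \dots \wedge s_d) = \sum_i \lambda_i(\norm,\norm') = d\cdot \vol(\norm,\norm'),$$
which is the claim (for the basis $(s_i)$, and hence any basis by the preceding step).

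\textbf{Reduction to the diagonalizable case.} When $\field$ is discretely valued, every norm in $\nv$ is diagonalizable, and a standard non-Archimedean Cartan-type argument shows that any two diagonalizable norms admit a common diagonalizing basis, so the previous two steps conclude. In the densely valued case, one approximates an arbitrary $\norm \in \nv$ by diagonalizable norms $\norm^{(k)} \to \norm$ in $d_\infty$. Both sides of the identity are continuous for $d_\infty$: the right-hand side because $\norm \mapsto \log \norm^{\wedge d}(e_1 \wedge \dots \wedge e_d)$ is Lipschitz of constant $d$ in $d_\infty$ (the exterior norm of a pure $d$-wedge only sees the product of $d$ factors), and the left-hand side because each $\lambda_i$ is $d_\infty$-Lipschitz by the minimax formula of the definition of the spectrum. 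Passing to the limit gives the identity in full generality.

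\textbf{Main obstacle.} The delicate point is the joint diagonalizability used implicitly in the second step: while two diagonalizable norms can always be put into the same apartment over a sufficiently rich $\field$, in the densely valued, non-algebraically-closed case this must be replaced by the approximation argument above, and one must verify that the Lipschitz constants are uniform enough to permit passage to the limit. This is the content of the calculations in \cite[\S2]{boueri} underlying the statement.
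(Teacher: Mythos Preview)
The paper does not supply its own proof of this theorem: it is quoted verbatim from \cite[T2.25]{boueri} and immediately followed by its corollary, with no argument given. So there is nothing in the paper to compare your attempt against.

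That said, your sketch is essentially the standard proof and is correct in outline. One point of confusion worth flagging: in your ``main obstacle'' paragraph you suggest that joint diagonalizability of two diagonalizable norms might fail over a non-algebraically-closed, densely valued $\field$ and must be replaced by approximation. In fact the codiagonalization result (any two diagonalizable norms lie in a common apartment) holds over an arbitrary non-Archimedean field; this is the content of \cite[Prop.~1.12--1.14]{boueri} and does not require passing to an extension. So your Step~2 already covers all pairs of diagonalizable norms regardless of $\field$, and the approximation in Step~3 is needed only to reach \emph{non}-diagonalizable norms, which can occur only in the densely valued case. Your continuity argument for that reduction is fine: each $\lambda_i$ is $1$-Lipschitz in $d_\infty$ by its minimax definition, and $\norm \mapsto \norm^{\wedge d}$ is $d$-Lipschitz on the log scale, exactly as you say.
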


\begin{corollary}Volumes satisfy a \textbf{cocycle property}: given a third norm $\norm''\in\nv$,
$$\vol(\norm,\norm')=\vol(\norm,\norm'')+\vol(\norm'',\norm').$$
\end{corollary}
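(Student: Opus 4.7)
The plan is essentially immediate: I would just apply Theorem~\ref{volumeslog} three times and observe a telescoping cancellation. Fix any basis $\{e_1,\dots,e_d\}$ of $V$ and set $\omega := e_1\wedge\dots\wedge e_d \in V^{\wedge d}$, which is a nonzero vector in a $1$-dimensional space.

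By Theorem~\ref{volumeslog} applied to the pairs $(\norm,\norm'')$ and $(\norm'',\norm')$, one has
$$\vol(\norm,\norm'') = \tfrac{1}{d}\bigl[\log\norm''^{\wedge d}(\omega) - \log\norm^{\wedge d}(\omega)\bigr],$$
$$\vol(\norm'',\norm') = \tfrac{1}{d}\bigl[\log\norm'^{\wedge d}(\omega) - \log\norm''^{\wedge d}(\omega)\bigr].$$
Summing, the middle term $\log\norm''^{\wedge d}(\omega)$ cancels, leaving exactly the formula for $\vol(\norm,\norm')$ provided by the same Theorem. This gives the desired identity.

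There is no real obstacle here: the cocycle property is built into the ``logarithmic determinant'' description of the volume, which is why Theorem~\ref{volumeslog} is stated right before the corollary. The only (minor) point worth a sentence is that the expression in Theorem~\ref{volumeslog} is a priori basis-dependent, but since any two choices of $\omega$ differ by a scalar in $\field^\times$, the difference of logarithms is independent of the chosen basis, so all three applications of the theorem may legitimately be made with the \emph{same} $\omega$.
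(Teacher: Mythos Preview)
Your proposal is correct and is precisely the intended argument: the paper states this as an immediate corollary of Theorem~\ref{volumeslog} without giving a separate proof, and the telescoping of the logarithmic-determinant formula you wrote out is exactly what is implicit there. Your remark on basis-independence is a nice addition but not strictly necessary, since Theorem~\ref{volumeslog} already asserts equality for \emph{any} fixed basis.
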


\begin{prop}[{\cite[P1.8, T1.19, L1.29]{boueri}}]Consider $\nv$ (and its subspaces) endowed with the distance $d_\infty$. We then have that:
\begin{itemize}
\item[(i)] $\nv$ is complete;
\item[(ii)] $\mathcal{N}^{\diag}(V)$ is dense in $\mathcal{N}(V)$, with equality if $\field$ is discretely valued;
\item[(iii)] if $\field$ is discretely valued, $\mathcal{N}^{\latt}(V)$ is discrete and closed in $\mathcal{N}(V)$;
\item[(iv)] if $\field$ is densely valued, $\mathcal{N}^{\latt}(V)$ is dense in $\mathcal{N}^{\diag}(V)$.
\end{itemize}
\end{prop}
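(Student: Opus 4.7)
The plan is to handle the four items separately, since they concern rather different aspects of the topology on $\nv$.

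For (i), I would exploit the characterization in Remark \ref{dinftydistorsion}: if $(\norm_n)$ is Cauchy for $d_\infty$, then for each fixed $v\neq 0$ the real sequence $\log \norm_n(v)$ is Cauchy with rate uniform in $v$. Setting $\norm(v):=\lim_n \norm_n(v)$, pointwise passage to the limit preserves positive homogeneity and the non-Archimedean triangle inequality; nondegeneracy survives because for $n$ large $\norm(v)\geq e^{-\epsilon}\norm_n(v)>0$. Uniformity then yields $d_\infty(\norm_n,\norm)\to 0$.

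For (ii), in the discretely valued case every norm on a finite-dimensional non-Archimedean vector space is diagonalizable (a classical theorem going back to Bosch--G\"untzer--Remmert), so $\mathcal{N}^{\diag}(V)=\nv$. In the densely valued case, I would argue by induction on $d=\dim V$: given $\norm$ and $\epsilon>0$, pick $e_d\neq 0$ realizing the infimum of $\norm$ on some fixed line up to a factor $1+\epsilon$, then produce a hyperplane $W$ on which the restriction of $\norm$ agrees, up to $\epsilon$-distortion, with the quotient norm induced on $V/\langle e_d\rangle$ (a non-Archimedean ``weak orthogonality'' splitting, exploiting that in the densely valued setting one can approximate infima of norms to arbitrary precision). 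Applying the inductive hypothesis to $W$ produces an $\epsilon$-almost diagonalizing basis.

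For (iii), a lattice norm $\norm$ takes values in $\{0\}\cup |\field^\times|$ (from its definition in a cartesian basis), and $|\field^\times|$ is discrete when $\field$ is discretely valued. Hence for two distinct lattice norms $\norm,\norm'$, the quantity $d_\infty(\norm,\norm')=\sup_{v\neq 0}|\log\norm(v)-\log\norm'(v)|$ lies in $\log|\field^\times|$ and is bounded below by $-\log|\pi|$ where $\pi$ is a uniformizer; this gives discreteness, and closedness then follows from the completeness proven in (i) (a Cauchy sequence in $\mathcal{N}^{\latt}(V)$ is eventually constant). For (iv), given $\norm\in\mathcal{N}^{\diag}(V)$ diagonalized by $(e_i)$ with $\norm(e_i)=e^{-r_i}$, density of $|\field^\times|$ in $\rr_+^\times$ furnishes $\mu_i\in\field^\times$ with $|\log|\mu_i|-r_i|<\epsilon$; the lattice norm $\norm'$ whose cartesian basis is $(\mu_i e_i)$ satisfies $\norm'(e_i)=|\mu_i|^{-1}$, so $\norm'(\sum a_ie_i)=\max_i|a_i|/|\mu_i|$, and comparing with $\norm(\sum a_ie_i)=\max_i|a_i|e^{-r_i}$ gives $d_\infty(\norm,\norm')\leq \max_i|\log|\mu_i|-r_i|\leq\epsilon$.

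The main obstacle is the densely-valued case of (ii): unlike the Archimedean or discretely valued settings there is no unconditional diagonalization theorem, and one must carry out a genuine inductive construction of $\epsilon$-almost cartesian bases, which is where the substantive non-Archimedean analysis lies. The remaining parts are largely formal consequences of the distortion interpretation of $d_\infty$ combined with (i).
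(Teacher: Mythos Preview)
The paper does not give its own proof of this proposition: it is quoted verbatim from \cite[P1.8, T1.19, L1.29]{boueri} and left unproved. So there is nothing in the paper to compare against; your proposal stands on its own.

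Your arguments for (i), (iii), and (iv) are correct and standard. In (iii) the assertion that $d_\infty(\norm,\norm')$ itself lies in $\log|\field^\times|$ is not quite justified (the supremum need not be attained a priori), but you do not actually need it: what matters is that each individual $|\log\norm(v)-\log\norm'(v)|$ lies in $\log|\field^\times|$, hence is either $0$ or $\geq -\log|\pi|$, and this already forces $d_\infty(\norm,\norm')\geq -\log|\pi|$ whenever the two lattice norms differ. In (iv) your rescaling argument is exactly the right one.

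For (ii) in the densely valued case your strategy (inductive construction of $\alpha$-cartesian bases, $\alpha$ close to $1$) is the correct one and is indeed the substance of the result in \cite{bgr} and \cite{boueri}. However, your description of the inductive step is garbled: ``the infimum of $\norm$ on some fixed line'' is zero, so that phrase does not pin down $e_d$. The standard step runs the other way: fix a hyperplane $W\subset V$, apply the inductive hypothesis to $(W,\norm|_W)$, then choose $e_d\in V\setminus W$ whose norm $\norm(e_d)$ approximates the quotient norm of its class in $V/W$ to within a factor $1+\varepsilon$ (possible because an infimum over a coset can be approached arbitrarily well). One then checks that $(e_1,\dots,e_{d-1},e_d)$ is $(1+\varepsilon)$-cartesian for $\norm$. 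You have correctly located where the real work lies; only the wording of that step needs to be cleaned up.
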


We then have that:

\begin{lemma}[{\cite[L1.29]{boueri}}]Let $\field$ be nontrivially valued.
\begin{itemize}
\item if $\field$ is discretely valued, the unit ball of any diagonalizable norm is a lattice of $V$;
\item if $\field$ is densely valued, the unit ball of a norm $\norm\in\nv$ is a lattice if and only if $\norm$ is a lattice norm.
\end{itemize}
\end{lemma}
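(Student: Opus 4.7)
The plan is to treat the two claims in sequence, since the reasoning differs in each. For the first statement, let $(e_1,\dots,e_d)$ be a basis diagonalizing $\norm$ with $\norm(e_i)=\alpha_i>0$. For $v=\sum v_i e_i$, one has $\norm(v)\leq 1$ if and only if $|v_i|\leq\alpha_i\mi$ for every $i$. Because $\field$ is discretely valued, the value group $|\field^\times|$ is cyclic (say generated by $|\pi|<1$), and the fractional ideal $\{v_i\in\field : |v_i|\leq\alpha_i\mi\}$ is principal, equal to $\pi^{n_i}\field^\circ$ for the smallest integer $n_i$ with $|\pi|^{n_i}\leq\alpha_i\mi$. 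Consequently the unit ball equals $\bigoplus_i\pi^{n_i}\field^\circ\cdot e_i$, a free $\field^\circ$-module of rank $d$, hence a lattice of $V$.

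For the second statement, one direction is immediate from the definitions: a lattice norm admits an orthonormal basis $(e_i)$, so its unit ball is $\bigoplus_i\field^\circ e_i$, a lattice. For the converse, suppose the unit ball $L$ is a lattice. I would proceed in two steps: first show that $L$ is a free $\field^\circ$-module of rank $d$, and then verify that any free $\field^\circ$-basis of $L$ is orthonormal for $\norm$, thus identifying $\norm$ as the corresponding lattice norm.

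For freeness, I would reduce generators. Starting from $\field^\circ$-generators $f_1,\dots,f_n$ of $L$ with $n>d$, there must exist a nontrivial $\field$-linear relation $\sum\lambda_i f_i=0$; picking $i_0$ with $|\lambda_{i_0}|$ maximal and dividing through by $\lambda_{i_0}$ turns the relation into one with coefficients in $\field^\circ$ and the $i_0$-th coefficient equal to $1$, exhibiting $f_{i_0}$ as an $\field^\circ$-combination of the remaining $f_j$. Iterating reduces to exactly $d$ generators $e_1,\dots,e_d$, necessarily $\field$-linearly independent for rank reasons, and giving $L=\bigoplus_i\field^\circ e_i$. To verify orthonormality, let $v=\sum a_i e_i$ and pick $i_0$ with $|a_{i_0}|=\max_i|a_i|$; then $w=a_{i_0}\mi v$ has all coordinates in $\field^\circ$, so $w\in L$ and $\norm(w)\leq 1$. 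A strict inequality $\norm(w)<1$ would, by density of $|\field^\times|$ in $\rr_{>0}$, produce $\mu\in\field^\times$ with $\norm(w)<|\mu|<1$, so that $\mu\mi w\in L$; but the $e_{i_0}$-coordinate of $\mu\mi w$ equals $\mu\mi$, whose absolute value exceeds $1$, contradicting membership in $\bigoplus_i\field^\circ e_i$. Hence $\norm(w)=1$ and $\norm(v)=|a_{i_0}|=\max_i|a_i|$. The main obstacle lies in the hard direction: the generator-reduction step uses the Bezout property of $\field^\circ$ shared by all valuation rings, while the final identification crucially exploits the density of $|\field^\times|$ in $\rr_{>0}$ --- precisely the densely-valued hypothesis --- which rules out the \emph{gap} values $\alpha\notin|\field^\times|$ that would otherwise let diagonalizable non-lattice norms have non-finitely-generated unit balls.
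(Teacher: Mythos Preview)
The paper does not prove this lemma; it merely quotes it from \cite[L1.29]{boueri}. Your argument is correct and self-contained. The discretely valued case is handled exactly as one expects, via principality of fractional ideals in a DVR. For the densely valued case, your generator-reduction step is the standard Bezout argument valid over any valuation ring, and the final identification $\norm(v)=\max_i|a_i|$ correctly isolates the role of the density hypothesis: it is precisely what forces $\norm(w)=1$ rather than merely $\norm(w)\leq 1$, ruling out the ``gap'' phenomenon you mention. Since the paper offers no proof of its own, there is nothing to compare against; you have simply supplied a valid direct argument where the paper defers to the literature.
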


\begin{remark}As seen above, the discrete and densely valued cases are fundamentally different. We will often use Remark \ref{hahnfield} and embed $\field$ into an algebraically closed field. We now study the behaviour of the objects previously defined under a general field extension.
\end{remark}

\begin{defi}
Let $\mathbb{L}/\field$ be a non-Archimedean field extension. Let $\norm$ be a non-Archimedean norm on $V=V_\field$. The \textbf{ground field extension} $\norm_{\mathbb{L}}$ on $V_\mathbb{L}=V\otimes_\field \mathbb{L}$ is defined as
$$\norm_{\mathbb{L}}(v')=\inf \max_i |a'_i|\cdot \norm(v_i),$$
for any $v'\in V_\mathbb{L}$, where the $\inf$ is defined over all representations
$$v'=\sum_i a'_i \cdot v_i,$$
with coefficients $a'_i$ in $\mathbb{L}$ and $v_i\in V_\field$.
\end{defi}

\noindent This defines by \cite[P1.24(i)]{boueri} a non-Archimedean norm on $V_\mathbb{L}$, which coincides with the original norm $\norm$ on $V_\field$. The two essential results for us are the following:

\begin{prop}[{\cite[L1.25,P2.14(v)]{boueri}}]\label{extvol}Let $\mathbb{L}/\field$ be a field extension. Let $\norm$ be a norm on $V=V_\field$, with ground field extension $\norm_{\mathbb{L}}$ on $V_\mathbb{L}$. We then have:
\begin{itemize}
\item if $\norm$ is diagonalizable with basis $(e_i)$, then $\norm_{\mathbb{L}}$ is also diagonalizable with basis $(e_i\otimes 1)$;
\item for any other norm $\norm'$ with ground field extension $\norm'_{\mathbb{L}}$, we have
$$\vol(\norm,\norm')=\vol(\norm_{\mathbb{L}},\norm'_{\mathbb{L}}).$$
\end{itemize}
\end{prop}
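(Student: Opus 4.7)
The plan is to handle the two parts separately, reducing both to direct computations once the right formal framework is set up.

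For part (i), I would unfold the definition of the ground field extension. Given $v' = \sum_i a'_i (e_i \otimes 1) \in V_\mathbb{L}$, the tautological representation immediately yields the upper bound $\norm_\mathbb{L}(v') \leq \max_i |a'_i| \cdot \norm(e_i)$. For the reverse inequality, one fixes an arbitrary alternative representation $v' = \sum_j b_j w_j$ with $b_j \in \mathbb{L}$ and $w_j \in V_\field$, expands each $w_j = \sum_i c_{ji} e_i$ in the diagonalizing basis, and observes that $a'_i = \sum_j b_j c_{ji}$, so the non-Archimedean triangle inequality gives $|a'_i| \leq \max_j |b_j| \cdot |c_{ji}|$. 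Combined with the diagonalizability hypothesis $|c_{ji}| \cdot \norm(e_i) \leq \norm(w_j)$, this yields $|a'_i| \cdot \norm(e_i) \leq \max_j |b_j| \cdot \norm(w_j)$ for every $i$, hence the desired bound after taking the maximum over $i$ and infimum over representations.

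For part (ii), the natural approach is to apply Theorem \ref{volumeslog} in the two situations and reduce the identity to a comparison on one-dimensional spaces. Pick any basis $(e_1, \dots, e_d)$ of $V_\field$ and write $\omega = e_1 \wedge \dots \wedge e_d$ and $\omega' = (e_1 \otimes 1) \wedge \dots \wedge (e_d \otimes 1)$. Theorem \ref{volumeslog} gives
$$\vol(\norm, \norm') = \tfrac{1}{d}\bigl[\log \norm'^{\wedge d}(\omega) - \log \norm^{\wedge d}(\omega)\bigr],$$
and the same formula applied in the extended setting expresses $\vol(\norm_\mathbb{L}, \norm'_\mathbb{L})$ in terms of $\omega'$. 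Hence it suffices to show that for any single norm $\mu$ on $V$, the equality $(\mu_\mathbb{L})^{\wedge d}(\omega') = \mu^{\wedge d}(\omega)$ holds.

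To establish this, I would argue that ground field extension is compatible with the construction of the exterior power norm, i.e. $(\mu_\mathbb{L})^{\wedge d}$ coincides with the ground field extension of $\mu^{\wedge d}$ under the canonical identification $V^{\wedge d} \otimes_\field \mathbb{L} \cong V_\mathbb{L}^{\wedge d}$. This reduces in turn to the analogous statements for tensor products (where both sides are characterized by the same infimum over representations of a pure tensor) and for quotients by the anti-symmetrization subspace (where the quotient norm and the ground field extension are each defined by an infimum, and the two commute by a standard exchange-of-infima argument). Once this compatibility is granted, the claim follows because $V^{\wedge d}$ is one-dimensional and the ground field extension of a norm on a one-dimensional space preserves the value on the basis generator: $(\mu^{\wedge d})_\mathbb{L}(\omega \otimes 1) = \mu^{\wedge d}(\omega)$.

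The main obstacle is the commutation of ground field extension with exterior powers, since the non-Archimedean quotient norms defining tensor, symmetric, and exterior powers all involve infima, and one must exchange these with the infimum defining the ground field extension. The argument itself is formal, but it requires some bookkeeping; everything else (part (i) and the reduction to the one-dimensional case in part (ii)) is a straightforward application of the definitions and of Theorem \ref{volumeslog}.
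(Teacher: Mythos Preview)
The paper does not supply its own proof of this proposition: it is quoted directly from \cite{boueri} (Lemma~1.25 and Proposition~2.14(v)), so there is no argument in the text to compare against.

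Taken on its own merits, your proposal is sound. Part~(i) is the standard computation and is correct as written. For part~(ii), reducing via Theorem~\ref{volumeslog} to the statement $(\mu_\mathbb{L})^{\wedge d}(\omega')=\mu^{\wedge d}(\omega)$ on the determinant line is the natural route, and your sketch of the commutation of ground field extension with the exterior-power construction is the right idea. One remark: the ``exchange of infima'' you allude to for tensor and quotient norms is genuinely a bit fiddly in the non-diagonalizable case, and a cleaner path---which you have essentially set up---is to first establish the identity when $\mu$ is diagonalizable (immediate from part~(i), since then $\mu^{\wedge d}(\omega)=\prod_i\mu(e_i)=\mu_\mathbb{L}^{\wedge d}(\omega')$ for a diagonalizing basis), and then invoke the density of diagonalizable norms together with the Lipschitz continuity of $\vol$ (Lemma~\ref{volumelipschitz}) and the fact that ground field extension is $d_\infty$-nonexpansive. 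This avoids the bookkeeping entirely.
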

\noindent Finally, we note that relative volumes behave well with respect to the $d_\infty$ distance.
\begin{lemma}[Volumes are Lipschitz, {\cite[P2.14]{boueri}}]\label{volumelipschitz}Fix a norm $\norm\in\nv$. We then have that:
$$|\vol(\norm',\norm)-\vol(\norm'',\norm)|\leq d_\infty(\norm',\norm''),$$
and similarly in the second variable. (Note that, due to our normalization, the Lipschitz coefficient is $1$.)
\end{lemma}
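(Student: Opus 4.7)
The plan is to reduce the statement, via the cocycle property for relative volumes established in the corollary preceding this lemma, to the single estimate $|\vol(\norm_1,\norm_2)| \leq d_\infty(\norm_1,\norm_2)$. Indeed, applying the cocycle with $\norm''$ in the middle gives $\vol(\norm',\norm) = \vol(\norm',\norm'') + \vol(\norm'',\norm)$, hence
$$\vol(\norm',\norm) - \vol(\norm'',\norm) = \vol(\norm',\norm''),$$
and symmetrically for the second variable, so the reduced inequality immediately implies Lipschitz continuity in both arguments.

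For the reduced estimate, I would invoke Theorem \ref{volumeslog}, which rewrites, for any nonzero top wedge $\omega = e_1\wedge\dots\wedge e_d$,
$$\vol(\norm_1,\norm_2) = \frac{1}{d}\left[\log \norm_2^{\wedge d}(\omega) - \log \norm_1^{\wedge d}(\omega)\right].$$
The key remaining step is the bound $d_\infty(\norm_1^{\wedge d},\norm_2^{\wedge d}) \leq d \cdot d_\infty(\norm_1,\norm_2)$, so that the factor $d$ cancels against the normalization above. Setting $C = d_\infty(\norm_1,\norm_2)$, the distortion characterization of $d_\infty$ recorded in Remark \ref{dinftydistorsion} gives $e^{-C}\norm_1(v) \leq \norm_2(v) \leq e^C\norm_1(v)$ for every nonzero $v\in V$. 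Plugging this pointwise bound into the infimum-over-representations formula for the induced norm on $V^{\otimes d}$ propagates the distortion to a factor of at most $e^{dC}$ on each simple tensor, hence on the whole tensor power by taking the infimum, and the inequality descends unchanged to the quotient $V^{\wedge d}$.

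I do not anticipate a serious obstacle here: the cocycle property and Theorem \ref{volumeslog} do almost all the work, and the propagation of distortion through tensor and exterior constructions is a routine check using the defining infima, which is insensitive to whether $\field$ is discretely or densely valued. The only minor point to keep in mind is the correct sign convention in Theorem \ref{volumeslog}, which makes the reduction via cocycle come out cleanly in the form written above.
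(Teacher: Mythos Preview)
The paper does not give its own proof of this lemma; it is simply quoted from \cite[P2.14]{boueri}. Your argument is correct: the cocycle property reduces the claim to the single estimate $|\vol(\norm_1,\norm_2)|\leq d_\infty(\norm_1,\norm_2)$, and the propagation of the distortion bound $e^{-C}\norm_1\leq\norm_2\leq e^C\norm_1$ through the infimum defining $\norm^{\otimes d}$, then to the quotient $V^{\wedge d}$, is the routine check you describe (the fact that quotients do not increase $d_\infty$ is in any case proven later in the paper).

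As a mild simplification, note that one can bypass Theorem~\ref{volumeslog} entirely: since $\vol(\norm_1,\norm_2)=d^{-1}\sum_i\lambda_i(\norm_1,\norm_2)$ and each $\lambda_i$ is a $\sup$--$\inf$ of values of $\log\norm_2-\log\norm_1$, one has $|\lambda_i|\leq d_\infty(\norm_1,\norm_2)$ for every $i$, and the bound on the average is immediate. Your route has the virtue of making the role of the normalization by $d$ visible through the top exterior power; the spectral route is shorter but less geometric.
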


\section{Line bundles.}

\subsection{Metrics.}

In this section, we consider a projective $\field$-variety $X$, and $p:L\to X$ a line bundle.

\begin{defi}Given a point $x\in X\an$, we denote by $\mathcal{H}(x)$ the completion of the residue field at $x$, with its canonical absolute value. A \textbf{continuous metric} $\phi$ on $L$ is a family of functions $$\phi_x:L\otimes \mathcal{H}(x)\to \rr\cup\{\infty\},$$
for all $x\in X$, such that $|\cdot|_{\phi_x}=e^{-\phi_x}$ is a norm on the $\mathcal{H}(x)$-line $L\otimes \mathcal{H}(x)$, and such that for any local section $s_U\in H^0(U,L)$ over a Zariski open set $U$, the composition
$$|s_U|_\phi:U\xrightarrow{s_U}L|_U\xrightarrow{|\cdot|_\phi}\rr_+$$
is continuous. We refer the reader to \cite{boueri} for further details.
We say that a continuous metric on $L$ lies in the set $\mathcal{C}^0(L)$.
\end{defi}

\noindent We use the conventions from \cite{boueri}, which we recall here:
\begin{itemize}
\item the tensor product of line bundles is denoted additively: $L^{\otimes k}\otimes M^{-1}$ is written as $kL-M$;
\item given metrics $\phi$, resp. $\phi'$ on $L$, resp. $M$, the induced metric on $kL-M$ is written as $k\phi-\phi'$;
\item we identify $\mathcal{C}^0(\oo_X)$ with $C^0(X)$ via $\varphi \leftrightarrow -\log |1|_\phi$, whence $\mathcal{C}^0(L)$ is an affine space modelled on $C^0(X)$, for $\phi,\,\phi'\in \mathcal{C}^0(L)$ transform as $|\cdot|_\phi=|\cdot |_\psi e^{\phi'-\phi}$.
\end{itemize}

\begin{remark}In general, we may define a non-necessarily continuous metric over $L$ by removing the second condition above, or impose different regularity conditions. In this case, the last point above still holds, modulo the necessary modifications of the sets of functions being used.
\end{remark}

\subsection{From norms to metrics and back: $\FS$ and $\mathrm{N}$.}

We fix a reference metric $\phi_0$ on $L$, and denote
$$|s|=|s|_{\phi_0},$$
given a section $s\in H^0(L)$. This will allow us to define certain operators relating norms and metrics, which always require such a choice of a metric, without explicitly stating that we do so. 

\noindent We first define the non-Archimedean equivalent of Kähler potentials:
\begin{defi}
Let $m$ be a large enough integer such that $mL$ is globally generated. A \textbf{Fubini-Study potential} on $L$ is a continuous function $f:X^{an}\to\rr$ of the form
$$f=\frac{1}{m}\log\left(\sup_{s\in H^0(mL)-\{0\}} \frac{|s|}{\norm(s)}\right),$$
where $\norm\in\mathcal{N}(H^0(mL))$. The set of Fubini-Study potentials is denoted $\mathcal{H}$.
\end{defi}

\noindent In order to simplify notation, we may introduce so-called \textit{Fubini-Study operators}, which give a way to turn norms into metrics:
\begin{defi}
Let $m$ be such that $mL$ is globally generated. We define the ($m$-th) \textbf{Fubini-Study operator} as such:
\begin{align*}
\FS_m:\mathcal{N}(H^0(mL))&\to C^0(X^{an}),\\
\norm&\mapsto \FS_m(\norm)=\frac{1}{m}\log\sup_{s\in H^0(mL)-\{0\}} \frac{|s|}{\norm(s)}.
\end{align*}
\end{defi}

\begin{remark}It is immediate that the set of Fubini-Study potentials on $L$ may be realized as the union of the images of the operators $\FS_m$ for all $m$ such that these operators are defined.
\end{remark}

\begin{remark}There is a nice way to compute the Fubini-Study operators restricted to each $\mathcal{N}^{\diag}(H^0(mL))$. Indeed, assume $(s_i)$ diagonalizes a norm $\norm$ on $H^0(mL)$ for some large $m$. We then have that:
$$\FS_m(\norm)=\frac{1}{m}\log \max_i \frac{|s_i|}{\norm(s_i)}.$$
\end{remark}

\begin{defi}
A metric on $L$ is \textbf{Fubini-Study} if its associated potential is of the form
$$\phi=\frac{1}{m}\log \max_i a_i\cdot |s_i|,$$
for some $m> 0$, $a_i\in\rr_{>0}$, and some basis $(s_i)$ of $H^0(mL)$. We say furthermore that a Fubini-Study metric is a \textbf{pure} \textit{Fubini-Study metric} if all of the coefficients $a_i$ above may be chosen to be $1$.
\end{defi}

\begin{remark}It follows that pure Fubini-Study metrics are characterized as those metrics in the image of some Fubini-Study operator restricted to the set of orthonormal (or \textit{lattice}) norms.
\end{remark}

\begin{defi}A reference metric $\phi_0$ having been fixed, a function on $X$ is said to be \textbf{$\phi_0$-plurisubharmonic} (or $\phi_0$-\textbf{psh}) if it is a limit of a decreasing net (equivalently, if it is continuous, a uniform limit) of functions in $\mathcal{H}$. In particular, a $\phi_0$-psh function is usc. A metric on $L$ is \textbf{psh} if its associated potential is psh. We denote the set of $\phi_0$-psh functions on $X$ by $\PSH_{\phi_0}(X)$ and the set of PSH metrics on $L$ by $\PSH(L)$. When it is clear from context, we will only write $\PSH$, and speak of plurisubharmonic functions, rather than $\phi_0$-plurisubharmonic functions.
\end{defi}

\begin{remark}The set $\PSH_{\phi_0}(X)\cup \{-\infty\}$ is stable under the following operations:
\begin{itemize}
\item addition of a real constant;
\item limits of decreasing nets;
\item maximum of finitely many functions.
\end{itemize}
\end{remark}

We now construct an operator from the space of continuous functions $C^0(X)$ (on $X^{an}$) to the space $\mathcal{N}_\bullet(L)$.

\begin{defi}The \textbf{$m$-th supnorm operator} $\bigN_m$ sends $\vp\in C^0(X)$ to the norm on $H^0(mL)$ defined as
$$\bigN_m(\vp)(s_m)=\sup_{X}|s_m|_{m\vp},$$
for $s_m\in H^0(mL)$.
\end{defi}

\noindent It it straightforward from the definitions to see that:
\begin{prop}[Lipschitz-like properties of $\FS$ and $N$]Consider two bounded functions $\vp$ and $\vp'\in C^0(X)$, and two norms $\norm$ and $\norm'$ on some $H^0(mL)$, $m\geq 1$. We then have that:
\begin{itemize}
\item $d_\infty(\bigN_m(\vp),\bigN_m(\vp'))\leq m\cdot\sup_X |\vp'-\vp|$;
\item $\sup_X |\FS(\norm') - \FS(\norm)|\leq m\mi \cdot d_\infty(\norm,\norm')$.
\end{itemize}
\end{prop}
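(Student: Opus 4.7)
The plan is to unravel both inequalities using the pointwise expressions of $\bigN_m$ and $\FS_m$, and to invoke the distortion characterization of $d_\infty$ from Remark \ref{dinftydistorsion}.

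For the first inequality, set $\epsilon = \sup_X|\vp'-\vp|$. Since the reference metric is $\phi_0$ and $|\cdot|_{m\varphi}$ denotes the norm associated with the metric $m\phi_0 + m\varphi$ on $mL$, one has the pointwise identity $|s_m|_{m\vp} = |s_m|\, e^{-m\vp}$ for any $s_m\in H^0(mL)$. From $|\vp-\vp'|\leq \epsilon$ I get $e^{-m\vp}\leq e^{m\epsilon}\,e^{-m\vp'}$, so taking suprema over $X$ yields
$$\bigN_m(\vp)(s_m)\leq e^{m\epsilon}\,\bigN_m(\vp')(s_m),$$
and the symmetric inequality holds as well. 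Applying Remark \ref{dinftydistorsion} to the pair $\bigN_m(\vp),\bigN_m(\vp')$ gives $d_\infty(\bigN_m(\vp),\bigN_m(\vp'))\leq m\epsilon$.

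For the second inequality, set $C = d_\infty(\norm,\norm')$, so by Remark \ref{dinftydistorsion} we have $e^{-C}\norm'(s)\leq \norm(s)\leq e^{C}\norm'(s)$ for every $s\in H^0(mL)$. Dividing $|s|(x)$ by $\norm(s)$ therefore gives
$$\frac{|s|(x)}{\norm(s)}\leq e^{C}\,\frac{|s|(x)}{\norm'(s)},$$
and the reverse with the roles of $\norm$ and $\norm'$ swapped. Taking the supremum over $s\in H^0(mL)-\{0\}$, then $m\mi\log$, produces $|\FS_m(\norm)(x)-\FS_m(\norm')(x)|\leq C/m$ for every $x\in X\an$, which is the claimed bound.

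I do not anticipate any real obstacle here: both statements are direct translations of the defining formulas for $\bigN_m$ and $\FS_m$ together with the distortion interpretation of $d_\infty$. The only mild point worth being careful about is keeping the identification $|\cdot|_{m\varphi}=|\cdot|\,e^{-m\varphi}$ straight, so that the exponent $m$ appears on the correct side of each inequality and the factors of $m$ and $m\mi$ land where they should.
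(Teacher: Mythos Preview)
Your proof is correct and is exactly the direct verification the paper has in mind; the text merely states that the inequalities are ``straightforward from the definitions'' and gives no further argument. Your use of the identity $|s_m|_{m\varphi}=|s_m|\,e^{-m\varphi}$ and of the distortion characterization of $d_\infty$ (Remark \ref{dinftydistorsion}) is precisely the intended unpacking.
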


For later use, we also define:
\begin{defi}
The \textbf{graded supnorm operator}
$$\bigN_\bullet:C^0(X)\to \mathcal{N}_\bullet(L)$$
sends $\vp$ to the sequence of norms $(\bigN_m(\vp))_m$.
\end{defi}
We will say that a sequence of norms is \textit{the $\sup$ norm associated to a bounded metric $\phi$} if and only if it is the image of the potential associated to $\phi$ under the operator $\bigN_\bullet$.

\subsection{Monge-Ampère measures.}

The theory of Chambert-Loir-Ducros (\cite{cld}) shows that, as in the complex case, we may associate to a $d$-uple of continuous, plurisubharmonic metrics $(\phi_1,\dots,\phi_d)$ on line bundles $L_i\to X$, $i\in\{1,\dots,d\}$ (with $d=\dim X$), a positive Radon measure
$$dd^c\phi_1\wedge\dots\wedge dd^c\phi_d,$$
called the \textbf{mixed Monge-Ampère measure} of $(\phi_1,\dots,\phi_d)$. Its total mass is equal to the intersection number of the $L_i$'s. The association of such a $d$-uple to its mixed Monge-Ampère measure is naturally symmetric and additive in each variable. It furthermore is stable under ground field extension:

\begin{prop}[MMA measures are invariant under ground field extension, {\cite[P8.3(iv)]{boueri}}]\label{extma}Let $\mathbb{L}/\field$ be a non-Archimedean field extension. Consider the cartesian diagram:

\begin{center}\begin{tikzcd}X\an_{\mathbb{L}}=(X\times_{\Spec{\field}}\Spec{\mathbb{L}})\an \arrow[r, "\pi_1"] \arrow[d,"\pi_2"] & X\an \arrow[d] \\
\Spec{\mathbb{L}}\an \arrow[r] & \Spec{\field}\an
\end{tikzcd}\end{center}

\noindent We then have that:
$${\pi_1}_*\left(dd^c(\pi_1^*\phi_1)\wedge\dots\wedge dd^c(\pi_1^*\phi_d)\right)=dd^c\phi_1\wedge\dots\wedge dd^c\phi_d.$$
\end{prop}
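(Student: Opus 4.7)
The plan is to reduce the identity first to the case of model metrics by continuity, and then to verify it on those metrics by a direct computation on semistable models, using the Chambert-Loir description of mixed Monge-Amp\`ere measures as finite weighted sums of Dirac masses at divisorial Shilov points.

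First, I would observe that both sides depend continuously on the tuple $(\phi_1,\dots,\phi_d)$ with respect to uniform convergence of continuous psh metrics. Pullback by $\pi_1$ is isometric for the $d_\infty$ metric on metrics, since $|\cdot|_{\pi_1^*\phi_i}$ is computed fibrewise; the MMA operator of Chambert-Loir--Ducros is continuous for uniform convergence of continuous psh metrics; and $(\pi_1)_*$ is continuous for the weak topology on Radon measures, as $\pi_1$ is continuous between compact Hausdorff spaces. It therefore suffices to prove the identity on a dense subclass of $\PSH(L)\cap \mathcal{C}^0(L)$. The natural choice is the class of $\field^\circ$-model metrics (or their $\mathbb{Q}$-rational combinations), which is dense in this setting under the assumption of continuity of envelopes.

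Second, for model metrics the MMA is explicit: choosing a common $\field^\circ$-model $(\mathcal{X},\mathcal{L}_1,\dots,\mathcal{L}_d)$ with special fibre $\mathcal{X}_0=\sum_E m_E\,E$, one has
$$dd^c\phi_1\wedge\dots\wedge dd^c\phi_d = \sum_E m_E\,(\mathcal{L}_1\cdot\dots\cdot\mathcal{L}_d\cdot E)\,\delta_{x_E},$$
where $x_E$ denotes the Shilov (divisorial) point associated with the component $E$. After base change, $\mathcal{X}_{\mathbb{L}^\circ}:=\mathcal{X}\times_{\field^\circ}\mathbb{L}^\circ$ is a model of $X_\mathbb{L}$ whose special fibre has irreducible components $E'$ indexed by the components of each $E\times_{\tilde\field}\tilde\mathbb{L}$, and by flatness of $\mathbb{L}^\circ/\field^\circ$ the intersection numbers $(\mathcal{L}_1\cdot\dots\cdot\mathcal{L}_d\cdot E)$ are preserved. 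Every such $x_{E'}$ satisfies $\pi_1(x_{E'})=x_E$, and the multiplicities $m_{E'}$ above $E$ sum (with the right weights) to $m_E$. Pushing the explicit MMA on $X^{\mathrm{an}}_\mathbb{L}$ through $\pi_1$ then reproduces the MMA on $X^{\mathrm{an}}$ component by component.

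The main obstacle is the combinatorial bookkeeping of step two: one must check that the multiplicities on the extended special fibre (which can behave subtly when $\tilde\mathbb{L}/\tilde\field$ is inseparable or positive-dimensional) precisely compensate any apparent redistribution of intersection data, and one must deal with the fact that $\mathbb{L}^\circ$ need not be Noetherian. The standard workaround is to factor $\mathbb{L}/\field$ through a finitely generated, hence tractable, sub-extension, carry out the intersection-theoretic check there using flatness, and then pass to the limit. Once this is established, the density-plus-continuity argument from step one immediately upgrades the identity to all continuous psh metrics.
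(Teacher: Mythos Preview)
The paper does not give its own proof of this proposition: it is stated with the attribution \cite[P8.3(iv)]{boueri} and used as a black box. There is therefore nothing in the paper to compare your argument against.

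That said, your sketch follows the standard route (density of model metrics plus the explicit Chambert-Loir formula on a common model, then base change of intersection numbers), which is essentially how the cited reference proceeds. One small correction: you do not need continuity of envelopes for the density step. In the framework of \cite{boueri}, continuous psh metrics are \emph{defined} as uniform limits of Fubini-Study (hence model) metrics, so density is built in; continuity of envelopes is a separate, strictly stronger hypothesis used elsewhere in the paper. Invoking it here would make the proposition conditional on something it does not require.

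Your identification of the genuine subtleties (non-Noetherian $\mathbb{L}^\circ$, behaviour of multiplicities under possibly inseparable residue extensions, and the reduction to finitely generated subextensions) is accurate, and these are exactly the points where a full proof demands care; as written, your second step is a correct outline rather than a proof, but that matches the level of detail one would expect for a result quoted from the literature.
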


\begin{defi}Assume $L$ to be a semiample and big line bundle. Let $\phi$, $\phi'\in \PSH(L)$. The \textbf{Monge-Ampère measure} of $\phi$ is the mixed Monge-Ampère measure
$$\MA(\phi)=\frac{1}{\vol(L)}\cdot (dd^c\phi)^d,$$
where $\vol(L)=c_1(L)^d$. It is a Radon probability measure. We define the \textbf{relative Monge-Ampère energy} of $\phi$ and $\phi'$ as follows:
$$E(\phi,\phi')=\frac{1}{(d+1)\vol(L)}\sum_{i=0}^d\int_X (\phi-\phi')\,(dd^c\phi)^i\wedge (dd^c\phi')^{d-i}.$$
\end{defi}

\begin{remark}\label{remarkenergies}
Our conventions are slightly different than those of \cite{boueri} - we adopt those of \cite{ber}: our Monge-Ampère energy is normalized by the volume of the semiample line bundle $L$.
\end{remark}

\noindent Much like volumes of norms, the Monge-Ampère energy satisfies a cocycle condition:
\begin{prop}[{\cite[P9.14(i)]{boueri}}]Let $\phi$, $\phi'$ and $\phi''\in \PSH(L)$, we have that:
$$E(\phi,\phi')=E(\phi',\phi'') + E(\phi'',\phi).$$
\end{prop}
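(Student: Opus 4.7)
The plan is to prove this cocycle relation via a one-parameter deformation argument, a standard strategy from pluripotential theory adapted to the non-Archimedean setting.

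First, I would observe that $E$ is antisymmetric: $E(\phi', \phi) = -E(\phi, \phi')$ follows immediately from the defining sum by substituting $j = d - i$, using commutativity of the wedge of $(dd^c\phi)^i$ and $(dd^c\phi')^{d-i}$ (both positive closed currents of pure even bidegree, per \cite{cld}), together with $\phi' - \phi = -(\phi - \phi')$. Via antisymmetry the stated identity is equivalent to the standard cocycle $E(\phi, \phi'') = E(\phi, \phi') + E(\phi', \phi'')$, which is what I would actually prove. For this I would introduce the affine family $\phi_t := (1 - t)\phi' + t\phi''$ for $t \in [0,1]$, lying in $\PSH(L)$ throughout by convexity, and differentiate $t \mapsto E(\phi, \phi_t)$.

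The key computation: expanding the defining sum and differentiating, I pick up two contributions. One from $\partial_t(\phi - \phi_t) = -(\phi'' - \phi')$, and one from $\partial_t(dd^c\phi_t)^{d-i} = (d-i)\, dd^c(\phi''-\phi')\wedge (dd^c\phi_t)^{d-i-1}$. Applying the integration-by-parts identity
$$\int_X u\,dd^c v \wedge T = \int_X v\,dd^c u\wedge T,$$
valid for continuous $u, v$ arising as differences of psh metrics on $L$ and $T$ a positive closed Monge-Ampère-type current (as in \cite{cld} and used throughout \cite{boueri}), one transfers $dd^c$ off $(\phi'' - \phi')$, and a routine telescoping cancellation collapses the result to
$$\frac{d}{dt} E(\phi, \phi_t) = -\int_X (\phi'' - \phi')\, \MA(\phi_t),$$
which is crucially independent of $\phi$.

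Consequently $t \mapsto E(\phi, \phi_t) - E(\phi', \phi_t)$ has vanishing derivative on $[0,1]$, so its value at $t = 0$ (namely $E(\phi, \phi')$, using $E(\phi', \phi') = 0$) agrees with its value at $t = 1$ (namely $E(\phi, \phi'') - E(\phi', \phi'')$), yielding the cocycle. The main obstacle will be rigorously justifying the interchange of derivative and integral and the integration-by-parts identity in the non-Archimedean setting; these are, however, standard consequences of the continuity of mixed Monge-Ampère measures under uniform convergence of continuous psh metrics, a cornerstone of the theory of Chambert-Loir and Ducros.
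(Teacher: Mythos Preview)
The paper does not supply its own proof of this proposition: it is simply quoted from \cite[P9.14(i)]{boueri} and used as a black box. So there is nothing to compare your argument against here.

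Your strategy---antisymmetry plus differentiation of $t\mapsto E(\phi,\phi_t)$ along the affine segment $\phi_t=(1-t)\phi'+t\phi''$, followed by integration by parts to see that the derivative is independent of $\phi$---is the standard one in both the complex and non-Archimedean settings and is correct for continuous psh metrics (the only class for which the paper actually defines $E$). The technical ingredients you flag (convexity of $\PSH(L)\cap C^0$, the symmetry/integration-by-parts formula for mixed Monge--Amp\`ere measures, and continuity of these measures under uniform limits) are all available in \cite{cld} and \cite{boueri}.

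One small caveat: the identity as literally printed in the paper, $E(\phi,\phi')=E(\phi',\phi'')+E(\phi'',\phi)$, is a typo---setting $\phi''=\phi'$ would force $E(\phi,\phi')=-E(\phi,\phi')$. The genuine cocycle relation is $E(\phi,\phi')+E(\phi',\phi'')+E(\phi'',\phi)=0$, equivalently $E(\phi,\phi'')=E(\phi,\phi')+E(\phi',\phi'')$, which is exactly what you prove. Your sentence claiming that antisymmetry makes the printed identity equivalent to the standard cocycle is therefore not quite right; rather, you are (correctly) proving the intended statement rather than the one written.
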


\subsection{Models.}

From now on, we consider an \textit{ample} line bundle $L\to X$.

\begin{defi}
A \textbf{model} of $X$ is the data of a flat scheme $\mathcal{X}$ of finite type over the valuation ring $\field^\circ$, and an isomorphism $\mathcal{X}\times_{\field^\circ}\field\to X$ as schemes over $\field$.
\end{defi}

\begin{remark}As a scheme, $\Spec{\field^\circ}$ has two points: the \textbf{generic point} corresponding to the ideal $\{0\}$, with residue field $\field$, and a closed point, the \textbf{special point}, corresponding to $\field^{\circ\circ}$, with residue field $\tilde{\field}$. Changing the base to $\field$ (resp. $\tilde{\field}$) amounts to taking the \textbf{generic fiber} $\mathcal{X}_\field$ (resp. \textbf{special fiber} $\mathcal{X}_s$).
\end{remark}

\begin{remark}Note that, if $\field$ is discretely valued, $\field^\circ$ is a discrete valuation ring. On the other hand, if $\field$ is densely valued, $\field^\circ$ can never be Noetherian. This raises obstacles to prove certain results in this case, such as continuity of envelopes, a property which we will encounter later on.
\end{remark}

\begin{example}If $\field$ is trivially valued, the only model of $X$ is $X$ itself. This is one reason why the authors in \cite{ber} consider the notion of \textbf{test configuration}, which serves as an alternative notion of models in this specific case.
\end{example}

\begin{defi}Let $L$ be a line bundle on $X$. A \textbf{model} $(\mathcal{X},\mathcal{L})$ of $(X,L)$ consists in a model $\mathcal{X}$ of $X$, projective over $\field^\circ$, and a line bundle $\mathcal{L}$ on $\mathcal{X}$ extending $L$ with respect to the identification $\mathcal{X}_\field\simeq X$. One then says that $\mathcal{L}$ is a model of $L$ determined on $\mathcal{X}$.
\end{defi}

\begin{defi}Let $\mathcal{X}$ be a model of $X$. There exists a compact subset $X^\beth\subseteq X\an$ (the Hebrew letter beth $\beth$ is to be read as "bet"), constructed in the affine case $\mathcal{X}=\Spec{A}$, with $A$ an algebra of finite type over $\field^\circ$, as the set of analytic points $x\in X^{an}$ with $|a(x)|\leq 1$, for any $a\in A$. The non-affine case is constructed by gluing affine open sets.
\end{defi}
\noindent Further discussion on the properties of the set $X^\beth$ may be found in \cite[4.3]{boueri}. For our purposes, the important result is the following:
\begin{lemma}[{\cite[L4.6]{boueri}}]If the model $\mathcal{X}$ is proper over $\field^o$, we have that
$$X\an=X^\beth.$$
\end{lemma}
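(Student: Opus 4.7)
The inclusion $X^\beth \subseteq X\an$ is tautological from the construction, which glues subsets $\mathcal{U}^\beth$ defined affine-locally inside $X\an$. The substance of the lemma is therefore the reverse inclusion, and the natural tool is the valuative criterion of properness applied to $\mathcal{X} \to \Spec{\field^\circ}$.

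Fix $x \in X\an$. Unpacking the definition, $x$ is encoded by a morphism
$$\iota_x: \Spec{\mathcal{H}(x)} \to X$$
over $\Spec{\field}$, where the complete valued field $\mathcal{H}(x)$ has rank one, so that its ring of integers $\mathcal{H}(x)^\circ$ is a bona fide rank-one valuation ring with fraction field $\mathcal{H}(x)$. Composing $\iota_x$ with the identification $X \simeq \mathcal{X}_\field$ yields a morphism to $\mathcal{X}$ over $\Spec{\field^\circ}$. Since $\mathcal{X}\to\Spec{\field^\circ}$ is proper, the valuative criterion provides a unique lift
$$\tilde{\iota}_x: \Spec{\mathcal{H}(x)^\circ} \to \mathcal{X}$$
over $\Spec{\field^\circ}$, whose value on the closed point is the \emph{reduction} $\mathrm{red}(x) \in \mathcal{X}_s$.

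To conclude, cover $\mathcal{X}$ by finitely many affine opens $\mathcal{U}_i = \Spec{A_i}$ (finite by quasi-compactness) and pick one, say $\mathcal{U}_0$, containing $\mathrm{red}(x)$. Because the generic point of $\Spec{\mathcal{H}(x)^\circ}$ specializes to its closed point, its image under $\tilde\iota_x$ specializes to $\mathrm{red}(x)$ and therefore also lies in the open $\mathcal{U}_0$; hence $\tilde\iota_x$ factors through $\mathcal{U}_0$. This factorization translates to a ring map $A_0 \to \mathcal{H}(x)^\circ$ extending the map $A_0 \to \mathcal{H}(x)$ describing $\iota_x$. Concretely, $|a(x)| \leq 1$ for every $a \in A_0$, so $x \in \mathcal{U}_0^\beth \subseteq X^\beth$.

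The only delicate point is invoking the valuative criterion in the possibly non-Noetherian regime arising when $\field$ is densely valued; this is however a formal consequence of properness, which guarantees the unique existence of an extension $\Spec{R} \to \mathcal{X}$ for every morphism $\Spec{\mathrm{Frac}(R)} \to \mathcal{X}$ over the base whenever $R$ is an arbitrary valuation ring. Everything else is scheme-theoretic bookkeeping, and the elementary observation that $\Spec{\mathcal{H}(x)^\circ}$ consists of just two points, with the closed one a specialization of the generic one, is precisely what lets us localize to a single affine chart.
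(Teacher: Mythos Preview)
The paper does not give its own proof of this lemma; it merely records the statement with a citation to \cite[L4.6]{boueri}. Your argument via the valuative criterion of properness is correct and is precisely the standard proof of this fact (and, in essence, the one found in the cited reference): a point of $X\an$ yields a map $\Spec\mathcal{H}(x)\to\mathcal{X}$, properness extends it to $\Spec\mathcal{H}(x)^\circ\to\mathcal{X}$, and the image of the closed point selects an affine chart $\mathcal{U}_0=\Spec A_0$ on which $|a(x)|\leq 1$ for all $a\in A_0$. Your remark that the valuative criterion applies to arbitrary valuation rings, hence also in the non-Noetherian densely valued case, is the only point requiring care, and you handle it correctly.
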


\begin{defi}
Let $(\mathcal{X},\mathcal{L})$ be a model of $(X,L)$. Let $(\mathcal{U}_i,\tau_i)$ be a trivialization of $\mathcal{L}$ over a finite open cover of $\mathcal{X}$. By the above lemma, the sets $\mathcal{U}_i^\beth$ cover $X^{an}=X^\beth$, so that we may define the \textbf{model metric} $\phi_\mathcal{L}$ associated to $\mathcal{L}$ by requiring the trivializations $\tau_i$ to be unitary for $\phi_\mathcal{L}$ over each $\mathcal{U}_i^\beth$. This metric is then well-defined and unique.
\end{defi}

\noindent If $L$ is merely a $\qq$-line bundle, the above definitions still make sense:
\begin{defi}
Let $L$ be a $\qq$-line bundle on $X$, and $\mathcal{X}$ a projective model of $X$. A \textbf{$\qq$-model} $\mathcal{L}$ of $L$ determined on $\mathcal{X}$ is then a $\qq$-line bundle $\mathcal{L}$ on $\mathcal{X}$, such that for some positive integer $m$, $m\mathcal{L}$ is a model of $mL$ determined on $\mathcal{X}$. Its associated model metric is then defined as $\phi_\mathcal{L}=m\mi \phi_{m\mathcal{L}}$, and we note by \cite[L5.10]{boueri} that $\phi_\mathcal{L}$ is independent of the choice of such an $m$.
\end{defi}

\noindent We have the following useful result:
\begin{prop}[{\cite[T5.14]{boueri}}]A continuous metric $\phi$ on a line bundle $L$ is a pure Fubini-Study metric if and only if it is a model metric associated to some semiample $\qq$-model $\mathcal{L}$ of $L$.
\end{prop}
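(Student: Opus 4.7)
The plan is to prove the two implications separately, via the bijection between globally generated $\qq$-models and pure Fubini--Study data that is latent in the $\beth$-picture above.

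\textbf{Model $\Rightarrow$ pure Fubini--Study.} Starting from a semiample $\qq$-model $\mathcal{L}$ on a projective model $\mathcal{X}$ of $X$, I would first pass to a positive integer $m$ so that $m\mathcal{L}$ is an honest line bundle and is globally generated on $\mathcal{X}$ (possible by semiampleness, up to a further multiple). Pick global sections $\sigma_1,\dots,\sigma_N\in H^0(\mathcal{X},m\mathcal{L})$ that generate $m\mathcal{L}$ and whose images $s_j\in H^0(X,mL)$ span $H^0(mL)$ over $\field$. On each trivializing open $\mathcal{U}_i^\beth$, the unitary-trivialization defining $\phi_\mathcal{L}$ forces $|\sigma_j|_{m\phi_\mathcal{L}}\le 1$ pointwise on the $\beth$-set, while global generation on $\mathcal{X}$ ensures that for each $x$ some $|\sigma_{j(x)}|_{m\phi_\mathcal{L}}(x)=1$. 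Invoking $X\an=X^\beth$ (since $\mathcal{X}$ is proper over $\field^\circ$), this yields
$$ m\phi_\mathcal{L}=\log\max_j |s_j| $$
on $X\an$. Extracting a basis of $H^0(mL)$ from $(s_j)$ (the extra sections can be discarded since the maximum is attained by basis elements at every point), we exhibit $\phi$ as a pure Fubini--Study metric.

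\textbf{Pure Fubini--Study $\Rightarrow$ Model.} Conversely, write $\phi=\frac{1}{m}\log\max_i |s_i|$ for a basis $(s_i)_{i=0}^N$ of $H^0(mL)$. The fact that $\phi$ is finite and continuous forces the $s_i$ to have no common zero, so they globally generate $mL$ and define a morphism $\vp\colon X\to \pp^N_\field$ with $\vp^*\oo_{\pp^N}(1)\simeq mL$. Take the scheme-theoretic image $\mathcal{X}$ of $X$ under the composition $X\hookrightarrow \pp^N_\field\hookrightarrow \pp^N_{\field^\circ}$; this is a projective flat model of $X$ over $\field^\circ$. Set $\mathcal{L}:=\frac{1}{m}\iota^*\oo_{\pp^N_{\field^\circ}}(1)$, a $\qq$-line bundle on $\mathcal{X}$; it is semiample since $\oo(1)$ on $\pp^N_{\field^\circ}$ is globally generated by the coordinate sections. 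The standard computation of the model metric of $\oo(1)$ on $\pp^N_{\field^\circ}$, in homogeneous coordinates $[X_0:\dots:X_N]$, gives $\phi_{\oo(1)}=\log\max_i |X_i|$, so pulling back along $\vp$ and dividing by $m$ yields $\phi_\mathcal{L}=\phi$ on $X\an=X^\beth$, as desired.

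\textbf{Main obstacle.} The only genuinely delicate step is the construction of $\mathcal{X}$ in the $(\Rightarrow)$ direction when $\field$ is densely valued, so that $\field^\circ$ is non-Noetherian: the naive schematic closure of $X$ inside $\pp^N_{\field^\circ}$ need not a priori be a flat finitely presented $\field^\circ$-scheme. I expect to resolve this by either a Raynaud--Gruson flattening, or, more concretely, by descending $X$ and the sections $s_i$ to a Noetherian subring of $\field^\circ$ over which everything is defined, forming the closure there, and then base-changing back to $\field^\circ$. Everything else is a bookkeeping exercise using the $\beth$-set description of $X\an$ and the definition of the model metric via unitary local trivializations.
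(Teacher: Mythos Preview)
The paper does not give its own proof of this Proposition; it is simply cited from \cite[T5.14]{boueri}. The only hint the paper offers is the Example immediately following, which singles out as ``an essential point in the proof'' that $H^0(\mathcal{X},m\mathcal{L})$ is a \emph{lattice} in $H^0(X,mL)$, giving a lattice norm $\norm_{H^0(m\mathcal{L})}$ with $\phi=m\mi\FS(\norm_{H^0(m\mathcal{L})})$. Your outline is consistent with that approach, but two points deserve comment.

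\medskip
\noindent\textbf{The ``extract a basis'' step is the actual gap.} In the direction model $\Rightarrow$ pure Fubini--Study, you correctly obtain $m\phi_\mathcal{L}=\log\max_j|s_j|$ for a generating family $(s_j)$, but the passage to a $\field$-basis is not justified: if $(s_1,\dots,s_n)$ is a $\field$-basis extracted from $(s_1,\dots,s_N)$, the remaining $s_k=\sum a_{ki}s_i$ may have $|a_{ki}|>1$, so there is no reason $|s_k|\le\max_{i\le n}|s_i|$. The fix is precisely the point the paper highlights: over a valuation ring a finitely generated torsion-free module is free, so $H^0(\mathcal{X},m\mathcal{L})$ admits a $\field^\circ$-basis $(e_i)$; then every $s_j$ is a $\field^\circ$-combination of the $e_i$, whence $|s_j|\le\max_i|e_i|$ by the ultrametric inequality, and $(e_i)$ is the desired orthonormal basis of the lattice norm.

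\medskip
\noindent\textbf{Your ``main obstacle'' is not one.} In the converse direction, flatness of the schematic closure $\mathcal{X}\subset\pp^N_{\field^\circ}$ is automatic: over a valuation ring, flat is the same as torsion-free, and the closure of the integral scheme $X$ is integral. No Raynaud--Gruson or descent to a Noetherian subring is needed for this step; the only thing to check is that $\mathcal{X}$ is of finite type, which holds since it sits as a closed subscheme of $\pp^N_{\field^\circ}$ cut out by the same equations as $X$ after clearing denominators. So the genuine subtlety lies in the first direction, not where you placed it.
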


\begin{example}\label{computingmodelmetric}An essential point in the proof of the previous result is the fact that the space of sections of a model of $L$ is a \textit{lattice} in $H^0(L)$, hence determines a lattice norm on $H^0(L)$, usually denoted $\norm_{H^0(\mathcal{L})}$. By \cite[E7.20]{boueri}, for all $m$ such that $m\mathcal{L}$ is globally generated, we have:
$$\phi=m\mi \FS(\norm_{H^0(m\mathcal{L})}).$$
\end{example}

\section{Graded norms and the asymptotic spectral measure.}

In this section, we assume $\field$ to be any non-Archimedean field, and $L$ to be a semiample line bundle over a variety $X/\field$. We let $$R(X,L)=\bigoplus_m H^0(X,mL)$$ be the graded algebra of multisections of $L$, which is integral. 

\subsection{Bounded graded norms.}\label{phi}

We define a natural notion of norm on $R(X,L)$:

\begin{defi}
A \textbf{graded norm} on $R(X,L)$ is the data of norms $\norm_\bullet=(\norm_m)$ on each graded piece $H^0(mL)$. Such a graded norm is said to be \textbf{submultiplicative} if:
\begin{itemize}
\item for any $s_m\in H^0(mL)$, $s_n\in H^0(nL)$, we have that
$$\norm_{m+n}(s_m\cdot s_n)\leq \norm_m(s_m)\cdot\norm_n(s_n);$$
\end{itemize}
and \textbf{multiplicative} if the above inequality is always an equality.
\end{defi}

\begin{example}
If $(\mathcal{X},\mathcal{L})$ is a model of $(X,L)$, the sequence of norms $\norm_{H^0(\bullet\mathcal{L})}$ is a submultiplicative graded norm. In particular, if $\field$ is trivially valued, then the sequence of trivial norms $\norm_{\bullet,triv}$ is a submultiplicative graded norm. 
\end{example}

\begin{defi}\label{defmodelnorm}
We say that a graded norm $\norm_\bullet$ is a \textbf{model graded norm} on $L$ if there exists a model $(\mathcal{X},\mathcal{L})$ of $(X,L)$, such that for all $m$, 
$$\norm_m=\norm_{H^0(m\mathcal{L})}.$$
\end{defi}

In order to endow spaces of graded norms with metric structures, we would like to associate, to any two submultiplicative graded norms $\norm_\bullet$, $\norm'_\bullet$ as defined above, a "spectral measure", in analogy with the spectral measure associated to two norms on a fixed vector space. The 'intuitive' way to do this, is to consider a weak limit in some sense of the (rescaled) measures
$$m\mi_*\sigma(\norm_m,\norm'_m).$$
It turns out that there exists such a limit, but we have to enforce a geometric growth condition on our norms. We will see later that it can also be replaced with another simpler growth condition.

\begin{defi}A submultiplicative graded norm $\norm_\bullet$ is said to be \textbf{bounded} if and only if there exists a model $(\mathcal{X},\mathcal{L})$ of $(X,L)$, with associated graded norm
$$\norm_{H^0(\bullet\mathcal{L})},$$
such that
\begin{equation}\label{linearequivalenceclass}
\lim_m m\mi d_\infty(\norm_m,\norm_{H^0(m\mathcal{L})})<\infty.
\end{equation}
We denote by $\mathcal{N}_\bullet(L)$ the set of all bounded submultiplicative graded norms on $R(X,L)$.
\end{defi}

\noindent In other words, a bounded graded norm has at most exponentially linear distorsion in $m$ compared to a model graded norm: there exists a constant $C>0$ and a model $(\mathcal{X},\mathcal{L})$ of $(X,L)$ such that, for all $m$ large enough,
$$e^{-mC}\norm_{H^0(m\mathcal{L})}\leq \norm_m \leq e^{mC}\norm_{H^0(m\mathcal{L})},$$
in light of Remark \ref{dinftydistorsion}.

\begin{remark}
Note that, if the Condition (\ref{linearequivalenceclass}) holds for one model, then it holds for all models, and all graded supnorms associated to bounded metrics on $L$, see \cite[T6.4]{boueri}. In particular, Condition (\ref{linearequivalenceclass}) in the case where $\field$ is trivially valued amounds to requiring our norm to be exponentially linearly close to the graded tricial norm $\chi_{triv,\bullet}$.
\end{remark}

As stated above, we then have the existence of a "limit" spectral measure for elements in $\mathcal{N}_\bullet(L)$. We will prove this result in the next section; it relies on Okounkov bodies. The exposition follows \cite{cmac} and \cite[T9.5]{boueri}. For now, we only state it:
\begin{theorem}\label{existslimitmeasure}
Let $\norm_\bullet$ and $\norm'_\bullet$ be bounded submultiplicative graded norms on $V_\bullet$. 
The sequence of spectral measures
$$m\mi_*\sigma(\norm_m,\norm'_m)$$
has uniformly bounded support and converges weakly to a boundedly supported limit measure
$$\sigma(\norm_\bullet,\norm'_\bullet).$$
\end{theorem}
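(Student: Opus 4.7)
The plan is to reduce weak convergence to an equidistribution statement on lattice points of an Okounkov body, via a pair of concave ``transforms'' extracted from the two graded norms.

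First, I would establish uniform bounded support. Since $\norm_\bullet$ and $\norm'_\bullet$ are both bounded, one can pick a common model $(\mathcal{X},\mathcal{L})$ of $(X,L)$ and a constant $C>0$ such that, for $m$ large, $d_\infty(\norm_m,\norm_{H^0(m\mathcal{L})})\leq mC$ and similarly for $\norm'_\bullet$. The triangle inequality for $d_\infty$ on $\mathcal{N}(H^0(mL))$ yields $d_\infty(\norm_m,\norm'_m)\leq 2mC$, and Remark \ref{dinftydistorsion} translates this into $\max_i|\lambda_i(\norm_m,\norm'_m)|\leq 2mC$. Therefore every rescaled measure $m\mi_*\sigma(\norm_m,\norm'_m)$ is supported in $[-2C,2C]$ uniformly in $m$, so any weak-$*$ subsequential limit is automatically a probability measure with bounded support.

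Next I would set up the Okounkov-body framework. By Remark \ref{hahnfield} and Proposition \ref{extvol}, extension of scalars to the completed algebraic closure leaves all spectra invariant, so we may assume $\field$ algebraically closed, complete, and non-trivially valued. Pick an admissible flag on $X$ and let $\nu:R(X,L)\setminus\{0\}\to\nn^d$ be the associated rank-$d$ valuation; for each $m$, $\Gamma_m:=\nu(H^0(mL)\setminus\{0\})$ has cardinality $h^0(mL)$. Equidistribution of Okounkov lattice points (\cite{kko,lazmus,boubbk}) gives
$$h^0(mL)\mi\sum_{\alpha\in\Gamma_m}\delta_{\alpha/m}\ \longrightarrow\ \mu_{\Delta(L)},$$
the normalized Lebesgue measure on the Okounkov body $\Delta(L)\subset\rr^d$. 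For a bounded submultiplicative graded norm $\norm_\bullet$ define
$$g_m(\alpha):=\max\bigl\{-\log\norm_m(s):s\in H^0(mL),\ \nu(s)=\alpha\bigr\},\quad \alpha\in\Gamma_m.$$
Multiplicativity of $\nu$ combined with submultiplicativity of $\norm_\bullet$ forces superadditivity of $g_\bullet$ along $\Gamma_\bullet$, while boundedness provides two-sided linear control in $m$. Fekete's lemma along each rational ray in $\mathrm{int}\,\Delta(L)$, followed by a concavity argument, produces a concave function $G:\Delta(L)\to\rr$ such that $m\mi g_m(m\,\cdot\,)\to G$ uniformly on compact subsets of $\mathrm{int}\,\Delta(L)$.

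Finally I would pass to the weak limit. Applying the construction to both $\norm_\bullet$ and $\norm'_\bullet$ yields concave transforms $G,G':\Delta(L)\to\rr$. Following \cite{cmac}, one chooses, for each $m$, a $\nu$-adapted basis of $H^0(mL)$ that $\varepsilon_m$-jointly-diagonalizes $\norm_m$ and $\norm'_m$ with $m\mi\varepsilon_m\to 0$; the Grassmannian min-max description of $\lambda_i(\norm_m,\norm'_m)$ then identifies the multiset $\{m\mi\lambda_i\}_i$ with $\{m\mi(g'_m(\alpha)-g_m(\alpha))\}_{\alpha\in\Gamma_m}$ up to an $o(1)$ error. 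Combining equidistribution with the uniform convergence of $m\mi g_m,m\mi g'_m$ then gives
$$m\mi_*\sigma(\norm_m,\norm'_m)\ \longrightarrow\ (G'-G)_*\mu_{\Delta(L)},$$
and the right-hand side is the desired $\sigma(\norm_\bullet,\norm'_\bullet)$; its bounded support was already guaranteed by the first step.

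The main obstacle is the approximate joint diagonalization step: when $\field$ is only densely valued, $\norm_m$ and $\norm'_m$ need not be jointly diagonalizable in any basis, let alone in a $\nu$-adapted one. One must instead carry out successive $\varepsilon$-orthogonalizations within each piece of the $\nu$-filtration and verify that the resulting perturbation of the spectral values is of size $o(m)$, hence invisible in the rescaled limit. A second, more technical point is to ensure that $G$ and $G'$ are finite on all of $\Delta(L)$ (not merely its interior), so that the push-forward $(G'-G)_*\mu_{\Delta(L)}$ is a well-defined probability measure; this is precisely where the boundedness hypothesis on the graded norms is indispensable.
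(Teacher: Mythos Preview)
Your overall framework---Okounkov bodies, concave transforms, and the equidistribution results of \cite{cmac}---is exactly what the paper uses. But the paper's proof is almost entirely a black-box application of \cite[T5.2]{cmac}: it only checks the three hypotheses needed there, of which (1) submultiplicativity and (2) the bound on $m\mi d_\infty(\norm_m,\norm'_m)$ are immediate from the definition of a bounded graded norm, and the entire argument is devoted to (3), the linear upper bound on the superadditive function $\Phi(n,\alpha)=-\log\norm_{n,\alpha}(s_{n,\alpha})$. You dispatch this in a single clause (``boundedness provides two-sided linear control in $m$''), but it is the non-trivial direction: boundedness only gives $\norm_m\geq e^{-Cm}\norm_{m\phi}$ for some bounded metric $\phi$, and one must still show $\norm_{n\phi}(s)\geq e^{-C(n+|\alpha|)}$ for every section with Taylor expansion $z^\alpha+\text{higher terms}$ at the chosen regular point. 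The paper does this by trivializing $L$ on an analytic polydisc around $x$, applying the maximum principle variable by variable to get $\sup|f|\geq r^{|\alpha|}$, and then invoking the linear-growth property $|\alpha|\leq C'n$ of $\Gamma(L)$. This local analytic input is the actual content of the paper's proof and is absent from your sketch.

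Two smaller points. Your definition $g_m(\alpha)=\max\{-\log\norm_m(s):\nu(s)=\alpha\}$ is $+\infty$ as written, since rescaling $s$ by $c\in\field^\times$ with $|c|\to 0$ preserves $\nu(s)$ while sending $\norm_m(s)\to 0$; the paper and \cite{cmac} work instead with the \emph{quotient} norm $\norm_{n,\alpha}$ on the one-dimensional piece $\gr_{n,\alpha}(L)$, evaluated at the class with leading coefficient $1$. And the obstacle you flag---approximate joint diagonalization in a $\nu$-adapted basis over a densely valued field---is not one the paper engages with: once hypotheses (1)--(3) hold it simply invokes \cite[T5.2]{cmac}, and the superadditive function there is built from these quotient norms on $\gr_{n,\alpha}(L)$ rather than from any jointly diagonalizing basis, so the densely-valued difficulty you raise never appears.
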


\begin{defi}The \textbf{asymptotic spectral measure} of $\norm_\bullet$ and $\norm'_\bullet$ is defined as the limit measure in Theorem \ref{existslimitmeasure}:
$$\sigma(\norm_\bullet,\norm'_\bullet).$$
\end{defi}

\subsection{Metric structures on spaces of graded norms.}

We may now introduce an asymptotic version of the distance $d_1$ we have previously defined on a space of norms over a fixed finite-dimensional $\field$-vector space. We similarly construct asymptotic volumes and $d_\infty$ distances.

\begin{defi}Set $\norm_\bullet$, $\norm'_\bullet\in\mathcal{N}_\bullet(V_\bullet)$.
\begin{itemize}
\item their \textbf{asymptotic $d_1$-distance} is defined by:
$$d_1(\norm_\bullet,\norm'_\bullet)=\int_\rr|\lambda|\,d\sigma(\norm_\bullet,\norm'_\bullet);$$
\item their \textbf{asymptotic $d_\infty$-distance} is defined by:
$$d_\infty(\norm_\bullet,\norm'_\bullet)=\sup_{m\in\nn^*}m\mi d_\infty(\norm_m,\norm'_m);$$
\item their \textbf{asymptotic relative volume} is defined by:
$$\vol(\norm_\bullet,\norm'_\bullet)=\int_\rr\lambda\,d\sigma(\norm_\bullet,\norm'_\bullet).$$
\end{itemize} 
\end{defi}

\begin{remark}\label{remarkvolumes}
Again, we adopt here the conventions of \cite{ber} concerning the volume: in \cite{boueri}, the volume (let us denote it $\vol_{BE}$) of two norms on a fixed space is not normalized by the dimension of the vector space, while they define the asymptotic volume as
$$\lim_m \frac{(\dim X)!}{m^{\dim X+1}}\vol_{BE}(\norm_m,\norm'_m),$$
which is then equal to $\vol(L)$ times the asymptotic volume presently normalized.
\end{remark}

\bigskip Note that the $d_1$ "distance" above is merely a semidistance: for example, since for any two norms $\norm$, $\norm'$ on a fixed $\field$-vector space $V$,
$$d_1(\norm,\norm')\leq d_\infty(\norm,\norm'),$$
so that if two bounded graded norms have at most subexponential growth in $m$, 
$$d_1(\norm_\bullet,\norm'_\bullet)\leq \lim_m m\mi d_\infty(\norm_m,\norm'_m)=0.$$
Even worse: there can be bounded graded norms such that $m\mi d_\infty(\norm_m,\norm'_m)\to C>0$ but $d_1(\norm_\bullet,\norm'_\bullet)=0$, see e.g. \cite[R3.8]{ber}. We then have to identify such norms:
\begin{defi}Two bounded graded norms $\norm_\bullet$ and $\norm'_\bullet$ are \textbf{asymptotically equivalent}, and we write
$$\norm_\bullet\sim \norm'_\bullet,$$
if and only if
$$d_1(\norm_\bullet,\norm'_\bullet)=0.$$
We then have that
$$(\mathcal{N}_\bullet(L)/\sim,d_1)$$
with the induced $d_1$ distance, is a \textit{bona fide} metric space.
\end{defi}

\begin{remark}
Note that, since $d_\infty$ is defined as a $\sup$ rather than as a limit, it is a genuine distance on $\mathcal{N}_\bullet(L)$.
\end{remark}

\noindent It is important to remark that, by definition of the limit measure, we have:
$$\vol(\norm_\bullet,\norm'_\bullet)=\lim_m m\mi \vol(\norm_m,\norm'_m)$$
and
$$d_1(\norm_\bullet,\norm'_\bullet)=\lim_m m\mi d_1(\norm_m,\norm'_m)$$
(but this is not the case for $d_\infty$).

\begin{remark}One has that $\norm_\bullet\sim \norm'_\bullet$ if one of the three following equivalent conditions is realized:
\begin{itemize}
\item for some $p\in[1,\infty)$, $\int_\rr|\lambda|^p\,d\sigma(\norm_\bullet,\norm'_\bullet)=0$;
\item for all $p\in[1,\infty)$, $\int_\rr|\lambda|^p\,d\sigma(\norm_\bullet,\norm'_\bullet)=0$;
\item the asymptotic spectral measure $\sigma(\norm_\bullet,\norm'_\bullet)$ is the Dirac measure $\delta_0$.
\end{itemize}
\end{remark}

\bigskip

\noindent Finally, following the idea that we will consider graded norms on line bundles over any non-Archimedean fields, and need to pass to an algebraically closed extension to apply Okounkov body techniques, it is important to check that graded norms stay graded after piecewise ground field extension. This is a result of Boucksom-Eriksson:

\begin{lemma}[{\cite[L9.4]{boueri}}]\label{extvolasymptotic}Set $\norm_\bullet$, $\norm'_\bullet\in\mathcal{N}_\bullet(L)$. Let $\mathbb{L}/\field$ be a complete field extension, and consider the sequences of ground field extensions
$$\norm_{\mathbb{L},\bullet},\,\norm'_{\mathbb{L},\bullet}.$$
Then, those sequences are bounded graded norms, and furthermore
$$\vol(\norm_{\mathbb{L},\bullet},\norm'_{\mathbb{L},\bullet})=\vol(\norm_\bullet,\norm'_\bullet).$$
\end{lemma}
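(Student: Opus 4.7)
The plan is to treat the two assertions separately: first that the ground field extensions remain bounded submultiplicative graded norms, then the volume identity, which will fall out of Proposition \ref{extvol} by passing to the limit.

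For the first assertion, the preliminary point is that flat base change for coherent cohomology on the projective $\field$-scheme $X$ gives canonical identifications $H^0(X_\mathbb{L}, mL_\mathbb{L}) = H^0(X, mL)\otimes_\field \mathbb{L}$ compatible with multiplication maps, so that $\norm_{\mathbb{L},\bullet}$ makes sense as a graded norm on $R(X_\mathbb{L}, L_\mathbb{L})$. Submultiplicativity then follows directly from the definition of ground field extension: given $s'_m \in H^0(mL_\mathbb{L})$, $s'_n \in H^0(nL_\mathbb{L})$, any representations $s'_m = \sum_i a'_i s_{m,i}$, $s'_n = \sum_j b'_j s_{n,j}$ yield a representation $s'_m s'_n = \sum_{i,j} a'_i b'_j (s_{m,i} s_{n,j})$; submultiplicativity of $\norm_\bullet$ then gives $\max_{i,j} |a'_i b'_j| \norm_{m+n}(s_{m,i} s_{n,j}) \leq (\max_i |a'_i| \norm_m(s_{m,i}))(\max_j |b'_j| \norm_n(s_{n,j}))$, and taking infima on both sides yields $\norm_{\mathbb{L},m+n}(s'_m s'_n) \leq \norm_{\mathbb{L},m}(s'_m)\cdot \norm_{\mathbb{L},n}(s'_n)$.

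For boundedness, fix a model $(\mathcal{X},\mathcal{L})$ of $(X,L)$ witnessing Condition \eqref{linearequivalenceclass} for $\norm_\bullet$. The base change $(\mathcal{X}_{\mathbb{L}^\circ}, \mathcal{L}_{\mathbb{L}^\circ})$ is again a model, and its associated model norm on $H^0(m\mathcal{L}_{\mathbb{L}^\circ})$ is exactly the ground field extension of $\norm_{H^0(m\mathcal{L})}$: the unit ball of the latter is a lattice, and its tensor up to $\mathbb{L}^\circ$ is a lattice in $H^0(mL_\mathbb{L})$ giving the former. Next, the ground field extension contracts the $d_\infty$ distance: if $\norm \leq e^C \norm'$ on $V$, then for any representation $v' = \sum a'_i v_i$ in $V_\mathbb{L}$, $\max_i |a'_i| \norm(v_i) \leq e^C \max_i |a'_i| \norm'(v_i)$, so passing to infima gives $\norm_\mathbb{L}(v') \leq e^C \norm'_\mathbb{L}(v')$; symmetrizing, $d_\infty(\norm_\mathbb{L}, \norm'_\mathbb{L}) \leq d_\infty(\norm, \norm')$. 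Applied at each level $m$ to $\norm_m$ and $\norm_{H^0(m\mathcal{L})}$ and divided by $m$, this yields $\lim_m m^{-1} d_\infty(\norm_{\mathbb{L},m}, \norm_{H^0(m\mathcal{L}_{\mathbb{L}^\circ})}) < \infty$, so $\norm_{\mathbb{L},\bullet} \in \mathcal{N}_\bullet(L_\mathbb{L})$; the same applies to $\norm'_{\mathbb{L},\bullet}$.

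For the volume identity, Proposition \ref{extvol} gives, at each fixed level, $\vol(\norm_m, \norm'_m) = \vol(\norm_{\mathbb{L},m}, \norm'_{\mathbb{L},m})$. By the remark following Theorem \ref{existslimitmeasure}, the asymptotic volume equals $\lim_m m^{-1}\vol(\norm_m, \norm'_m)$; applying this to both sides and invoking the just-proved boundedness so that both limits exist,
$$
\vol(\norm_\bullet, \norm'_\bullet) \;=\; \lim_m m^{-1}\vol(\norm_m, \norm'_m) \;=\; \lim_m m^{-1}\vol(\norm_{\mathbb{L},m}, \norm'_{\mathbb{L},m}) \;=\; \vol(\norm_{\mathbb{L},\bullet}, \norm'_{\mathbb{L},\bullet}).
$$
The hard part is essentially absent: all technical work has been done either in Proposition \ref{extvol} (finite-level invariance of volume) or in verifying that ground field extension is well behaved on the category of graded norms. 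The one mildly subtle point is the identification of the model norm associated to $(\mathcal{X}_{\mathbb{L}^\circ}, \mathcal{L}_{\mathbb{L}^\circ})$ with the extension of the model norm of $(\mathcal{X}, \mathcal{L})$, which boils down to the lattice-theoretic behaviour of base change of $\field^\circ$-modules into $\mathbb{L}^\circ$-modules, and is implicit in the discussion of lattice norms and their unit balls earlier in the paper.
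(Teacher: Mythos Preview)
The paper does not supply its own proof of this lemma: it is quoted verbatim from \cite[L9.4]{boueri} and left without argument. Your proposal is therefore not being compared against anything in the text, but it is a correct and self-contained reconstruction. The two ingredients you isolate---that ground field extension is $d_\infty$-contracting (hence preserves boundedness against a base-changed model) and that Proposition~\ref{extvol} gives level-wise equality of volumes, which passes to the limit via the identity $\vol(\norm_\bullet,\norm'_\bullet)=\lim_m m^{-1}\vol(\norm_m,\norm'_m)$---are exactly what is needed, and your handling of submultiplicativity under extension is clean. The one place to be slightly more careful is the identification $H^0(m\mathcal{L}_{\mathbb{L}^\circ})\simeq H^0(m\mathcal{L})\otimes_{\field^\circ}\mathbb{L}^\circ$: this is flat base change for cohomology, valid because $\field^\circ\to\mathbb{L}^\circ$ is torsion-free (hence flat, valuation rings being Pr\"ufer) and $\mathcal{X}$ is proper; you allude to this but it deserves one explicit sentence rather than being folded into ``lattice-theoretic behaviour.''
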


\subsection{Norms generated in degree one.}

\noindent We introduce a subspace of $\mathcal{N}_\bullet(L)$ in the case where the algebra of sections of $L$ satisfies good properties with respect to symmetry morphisms.

\begin{defi}
Let $L$ be a semiample line bundle over $X$. We say that $R(X,L)$ is \textbf{generated in degree one} if for all $r\in\mathbb{N}$, the morphism induced by $r$-symmetric powers: 
$$\Phi_r: H^0(L)^{\odot r} \to H^0(rL)$$
is surjective.
\end{defi}

\noindent This definition then propagates to bounded graded norms:

\begin{defi}
Assume $R(X,L)$ to be generated in degree one. We say that $\norm_\bullet\in\mathcal{N}_\bullet(L)$ is \textbf{generated in degree one} if for all $r\in\nn$, $\norm_r$ is the quotient norm induced by $\norm_1^{\odot r}$ via the above morphism $\Phi_r$.
\end{defi}

Bounded graded norms $\norm_\bullet$ generated in degree one are very easy to study: their asymptotic behaviour is heavily controlled by that of $\norm_1$ and of the asymptotic structure of the algebra $R(X,L)$. The main result of this subsection will be a powerful approximation theorem for these norms.

\bsni If $L$ is semiample, there always exists a $r$ such that $R(X,rL)$ is generated in degree one. Thus, while norms generated in degree one may not necessarily exist (as they require $R(X,L)$ to be generated in degree one), the following class is always nonempty:
\begin{defi}
We say that a graded norm $\norm_\bullet$ is \textbf{finitely generated} if there exists a positive integer $r$ such that $\norm_{r\bullet}$ is generated in degree one on $rL$.
\end{defi}

\begin{remark}
We can in fact replace the growth condition in the definition of a bounded graded norm, by defining boundedness as being of finite $d_\infty$-distance with respect to a finitely generated norm.
\end{remark}

\noindent Due to the above remark, it is interesting to see the behaviour of the asymptotic spetral measure restricted to such a subalgebra. We have the following results:
\begin{prop}Recall that, given $f:\rr\to\rr$, and measure $\mu$ on the reals, $f_*\mu$ denotes the pushforward of $\mu$ by $f$. Set $\norm_\bullet$, $\norm'_\bullet\in\mathcal{N}_\bullet(L)$. We then have that:
\begin{itemize}
\item $f(\lambda)=-\lambda\Rightarrow f_*\sigma(\norm_\bullet,\norm'_\bullet)=\sigma(\norm'_\bullet,\norm_\bullet)$;
\item for any $c\in\rr$, $f(\lambda)=\lambda + c\Rightarrow f_*\sigma(\norm_\bullet,\norm'_\bullet)=\sigma(e^{-c}\norm_\bullet,\norm'_\bullet)$;
\item for any $r\in\nn^*$, $f(\lambda)=r\lambda\Rightarrow f_*\sigma(\norm_\bullet,\norm'_\bullet)=\sigma(\norm_{r\bullet},\norm'_{r\bullet})$,
\end{itemize}
where $\norm_{r\bullet}$ denotes the restriction of $\norm_\bullet$ to the subalgebra $R(X,rL)$.
\end{prop}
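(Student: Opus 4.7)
The plan is to verify each of the three identities first at the finite-dimensional level, i.e.\ for the spectral measures $\sigma(\norm_m, \norm'_m)$, and then pass to the weak limit via Theorem~\ref{existslimitmeasure}. Since each of the three maps $f$ involved is affine and the measures in play have uniformly bounded support, pushforward by $f$ commutes with weak convergence, so the limiting identities follow automatically from their finite-level counterparts.

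For the first identity, I would show that the multisets $\Sp(\norm_m, \norm'_m)$ and $\Sp(\norm'_m, \norm_m)$ are the negatives of one another. In the jointly diagonalizable case this is immediate from the remark following the definition of the relative spectrum: in a joint diagonalizing basis $(s_i)$ one has $\lambda_i(\norm_m, \norm'_m) = \log \norm'_m(s_i) - \log \norm_m(s_i) = -\lambda_i(\norm'_m, \norm_m)$. In general one reduces to this via the ground field extension of Remark~\ref{hahnfield}, using that the relative spectral measure is invariant under such extension (a statement that follows from the same strategy as Proposition~\ref{extvol} for volumes applied to the relevant exterior powers, or alternatively from a min--max duality on the Grassmannian formula).

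For the second identity, one must interpret $e^{-c}\norm_\bullet$ as the graded sequence $(e^{-cm}\norm_m)_m$, which is the correct convention in view of the normalization $m^{-1}_*$. The Grassmannian formula then gives at once $\lambda_i(e^{-cm}\norm_m, \norm'_m) = \lambda_i(\norm_m, \norm'_m) + cm$, so after dividing by $m$ every atom is shifted by exactly $c$, and the weak limit delivers the required shift.

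The third identity requires a small reindexing. By definition,
$$\sigma(\norm_{r\bullet}, \norm'_{r\bullet}) = \lim_m m^{-1}_*\sigma(\norm_{rm}, \norm'_{rm}),$$
while $\sigma(\norm_\bullet, \norm'_\bullet) = \lim_n n^{-1}_*\sigma(\norm_n, \norm'_n)$, a limit that persists along any subsequence, in particular $n = rm$. Since $m^{-1} = r \cdot n^{-1}$ as maps $\rr \to \rr$, pushforwards satisfy $m^{-1}_* = r_* \circ n^{-1}_*$, whence
$$\sigma(\norm_{r\bullet}, \norm'_{r\bullet}) = r_*\lim_m (rm)^{-1}_*\sigma(\norm_{rm}, \norm'_{rm}) = r_*\sigma(\norm_\bullet, \norm'_\bullet),$$
which is $f_*\sigma(\norm_\bullet,\norm'_\bullet)$ for $f(\lambda) = r\lambda$. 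The main technical point to verify carefully would be the invariance of the full spectral measure (not merely its first moment) under ground field extension, used in the first identity; but this follows by the same strategy as for volumes (Proposition~\ref{extvol}, Lemma~\ref{extvolasymptotic}).
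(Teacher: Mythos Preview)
Your argument is correct. The paper itself does not supply a proof of this proposition: it simply remarks that the statement is the non-trivially valued analogue of \cite[Propositions~3.4 and~3.5]{ber} and moves on. Your proposal fills in precisely the details one would expect behind that citation, namely verifying each transformation law at the finite level and passing to the weak limit using continuity of the affine map $f$ on the uniformly bounded supports. The one point you flag yourself---invariance of the full spectral measure (not merely its first moment) under ground field extension---is indeed what makes the first identity work in general; it is handled in \cite{boueri} along the same lines as Proposition~\ref{extvol}, and once one has joint diagonalizability over the extended field (a standard fact in this setting, cf.\ \cite[P2.24]{boueri}), the rest is bookkeeping. So your proof is both correct and in the spirit of what the paper defers to \cite{ber}.
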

\noindent This Proposition is the non-trivially valued equivalent of Propositions 3.4 and 3.5 of \cite{ber}.

\bigskip\noindent From now on, in this subsection, we assume $R(X,L)$ to be generated in degree one. We wish to prove an approximation Theorem for norms generated in degree one. We start with some preparatory Propositions and Lemmas.

\begin{prop}[Quotients decrease distance]Let $W\subset V$ be a proper linear subspace of $V$, let $\norm$ and $\norm'$ be norms on $V$. Denote $\tilde{\norm}$ and $\tilde{\norm}'$ the induced quotient norms on $V/W$. We then have that:
$$d_\infty(\tilde{\norm},\tilde{\norm}')\leq d_\infty(\norm,\norm').$$
\end{prop}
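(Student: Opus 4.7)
The plan is to use the characterization of $d_\infty$ from Remark \ref{dinftydistorsion}: the distance $d_\infty(\norm,\norm')$ is the smallest constant $C\geq 0$ for which the two-sided bound
$$e^{-C}\norm'(v)\leq \norm(v)\leq e^{C}\norm'(v)$$
holds for every $v\in V$. The key observation is that such a multiplicative distortion passes intact through the infimum defining the quotient norm.

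Concretely, set $C:=d_\infty(\norm,\norm')$ and recall that $\tilde\norm([v])=\inf_{w\in W}\norm(v+w)$, with the analogous formula for $\tilde\norm'$. First I would fix $[v]\in V/W$ and apply the inequality $\norm(v+w)\leq e^{C}\norm'(v+w)$ pointwise in $w\in W$, then take the infimum on the right-hand side over $w\in W$ to obtain $\tilde\norm([v])\leq e^{C}\tilde\norm'([v])$. Swapping the roles of the two norms yields the reverse bound $\tilde\norm'([v])\leq e^{C}\tilde\norm([v])$, so the same constant $C$ controls the distortion between $\tilde\norm$ and $\tilde\norm'$. By minimality of $d_\infty(\tilde\norm,\tilde\norm')$ among such constants, this gives exactly $d_\infty(\tilde\norm,\tilde\norm')\leq C=d_\infty(\norm,\norm')$.

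There is essentially no obstacle: the argument is one line once the multiplicative characterization of $d_\infty$ is in hand, because the inequality $\norm\leq e^{C}\norm'$ is preserved by taking infima over the same affine subset $v+W$. The only thing to note is that we do not need $W$ to be closed or $\norm$ to be diagonalizable, and that the statement extends to $\tilde\norm$ being genuinely a norm rather than a seminorm provided $W$ is closed in the norm topology of either $\norm$ or $\norm'$; in the finite-dimensional non-Archimedean setting this is automatic, so no additional hypothesis is required.
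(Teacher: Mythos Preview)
Your proof is correct and follows essentially the same approach as the paper: both fix the distortion constant $C=d_\infty(\norm,\norm')$, apply the two-sided bound to arbitrary lifts $v+w$, and pass to the infimum over $W$ to recover the same bound for the quotient norms. The paper's argument is line-for-line the same idea, just with the inequalities written out in the opposite direction first.
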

\begin{proof}
Fix $a=d_\infty(\norm,\norm')$, and $\tilde{v}\in V/W$. It is enough to show that
$$e^{-a}\tilde{\norm}'(\tilde{v})\leq \tilde{\norm}(\tilde{v})\leq e^a\tilde{\norm}(\tilde{v}).$$
We lift $\tilde{v}$ to a sum $v+w$ with $v\in V-W$ and $w\in W$. Note that
$$e^{-a}\norm'(v+w)\leq \norm(v+w),$$
for all such lifts, so that we can pass to the inf and get that
$$e^{-a}\tilde{\norm}'(\tilde{v})\leq \tilde{\norm}(\tilde{v}).$$
Similarly, we get that
$$e^{-a}\tilde{\norm}(\tilde{v})\leq \tilde{\norm}'(\tilde{v}).$$
The result follows.
\end{proof}

\begin{corollary}\label{gendegonedinfty}Let $\norm_\bullet$, $\norm'_\bullet\in\mathcal{N}_\bullet(L)$ be generated in degree one. We then have that:
$$d_\infty(\norm_\bullet,\norm'_\bullet)= d_\infty(\norm_1,\norm'_1).$$
\end{corollary}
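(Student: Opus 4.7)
The plan is to prove the inequality in both directions. The direction $d_\infty(\norm_\bullet,\norm'_\bullet)\geq d_\infty(\norm_1,\norm'_1)$ is immediate from the definition $d_\infty(\norm_\bullet,\norm'_\bullet)=\sup_m m^{-1} d_\infty(\norm_m,\norm'_m)$, simply by taking the $m=1$ term in the supremum.

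For the nontrivial direction, I would fix an arbitrary $m\geq 1$ and bound $m^{-1}d_\infty(\norm_m,\norm'_m)$ by $d_\infty(\norm_1,\norm'_1)$. Since $\norm_\bullet$ and $\norm'_\bullet$ are generated in degree one, by definition $\norm_m$ (respectively $\norm'_m$) is the quotient norm induced by $\norm_1^{\odot m}$ (respectively $\norm'_1{}^{\odot m}$) along the surjection $\Phi_m : H^0(L)^{\odot m}\to H^0(mL)$. Applying the preceding Proposition (``quotients decrease distance'') to the pair $(\norm_1^{\odot m}, \norm'_1{}^{\odot m})$ on $H^0(L)^{\odot m}$ with $W=\ker \Phi_m$, we obtain
\[
d_\infty(\norm_m,\norm'_m)\,\leq\, d_\infty\bigl(\norm_1^{\odot m},\,\norm'_1{}^{\odot m}\bigr).
\]

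The key step is then to verify that symmetric powers scale the $d_\infty$-distance by exactly the factor of the power, i.e.
\[
d_\infty\bigl(\norm_1^{\odot m},\,\norm'_1{}^{\odot m}\bigr)\leq m\cdot d_\infty(\norm_1,\norm'_1).
\]
This follows from the distortion characterization in Remark \ref{dinftydistorsion}: writing $a=d_\infty(\norm_1,\norm'_1)$, we have $e^{-a}\norm'_1\leq \norm_1\leq e^a \norm'_1$; taking the $m$-fold tensor product (which preserves such multiplicative bounds since the tensor norm of pure tensors multiplies and these bounds pass to general elements by the infimum defining the tensor norm) yields $e^{-ma}\norm'_1{}^{\otimes m}\leq \norm_1^{\otimes m}\leq e^{ma}\norm'_1{}^{\otimes m}$, and then passing to the quotient norm on $V^{\odot m}$ (again using quotients decrease distance) gives the same bound for the symmetric powers.

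Combining the two inequalities gives $m^{-1}d_\infty(\norm_m,\norm'_m)\leq d_\infty(\norm_1,\norm'_1)$ for every $m$, hence the supremum equals $d_\infty(\norm_1,\norm'_1)$. The main (mild) obstacle is just the bookkeeping for the tensor/symmetric power distortion bound, but given Remark \ref{dinftydistorsion} and the Proposition just proved, this is essentially mechanical.
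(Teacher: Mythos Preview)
Your proof is correct and follows essentially the same route as the paper: both arguments use the distortion characterization of $d_\infty$ to get $e^{-ma}(\norm'_1)^{\otimes m}\leq \norm_1^{\otimes m}\leq e^{ma}(\norm'_1)^{\otimes m}$, then apply the ``quotients decrease distance'' Proposition twice (once to pass from $\otimes m$ to $\odot m$, once to pass from $\odot m$ to $H^0(mL)$), and finally observe that the reverse inequality is immediate from the $m=1$ term in the supremum defining $d_\infty(\norm_\bullet,\norm'_\bullet)$.
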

\begin{proof}
This follows on repeatedly applying the previous proposition. Set $a=d_\infty(\norm_1,\norm'_1)$. For any $m>1$, we have that 
$$\phi_m:H^0(L)^{\odot m}\to H^0(mL)$$
is surjective. Consider $s\in H^0(mL)$, and lifts $\tilde{s}$ of $s$ in $H^0(L)^{\odot m}$, which themselves lift to $\tilde{\tilde{s}}\in H^0(L)^{\otimes m}$. We naturally have that
$$e^{-ma}(\norm'_1)^{\otimes m}(\tilde{\tilde{s}})\leq (\norm_1)^{\otimes m}(\tilde{\tilde{s}}) \leq e^{ma} (\norm'_1)^{\otimes m}(\tilde{\tilde{s}}),$$
so that, applying the above Proposition,
$$e^{-ma}(\norm'_1)^{\odot m}({\tilde{s}})\leq (\norm_1)^{\odot m}({\tilde{s}}) \leq e^{ma} (\norm'_1)^{\odot m}({\tilde{s}}),$$
and finally, since a graded norm generated in degree one is a quotient,
$$e^{-ma}(\norm'_m)({s})\leq (\norm_m)(s) \leq e^{ma} (\norm'_m)(s).$$
This establishes
$$d_\infty(\norm_\bullet,\norm'_\bullet)\leq d_\infty(\norm_1,\norm'_1),$$
and since the $d_\infty$ distance is defined as a $\sup$, we in fact have equality.
\end{proof}

\noindent The coming results require specific constructions of bounded graded norms. It will not be possible in general to assume them to be being generated in degree one; however, they will coincide in all high enough degrees with one such norm. Hence, we introduce the following definition, to make our later statements lighter.
\begin{defi}
We say that a bounded graded norm $\norm_\bullet$ is \textbf{eventually generated in degree one} if there exists a norm generated in degree one $\norm^o_\bullet$ on $L$, and a positive integer $r$, such that for all $m\geq r$,
$$\norm_m=\norm^o_m.$$
We will say that $\norm_\bullet$ \textbf{eventually coincides} with $\norm^o_\bullet$.
\end{defi}

\begin{remark}\label{eventuallygdoneremark}
In particular, Corollary \ref{gendegonedinfty} may be reformulated in the context of norms eventually generated in degree one as follows: let $\norm_\bullet$ and $\norm'_\bullet$ be two such norms, eventually coinciding with norms generated in degree one $\norm^o_\bullet$ and $\norm'_\bullet{}^o$ respectively. Then, for all large enough $m$,
$$m\mi d_\infty(\norm_m,\norm'_m)\leq d_\infty(\norm_1^o,\norm'_1{}^o).$$
\end{remark}

\bigskip We now describe how to construct, starting from a lattice norm, a model $(\mathcal{X},\mathcal{L})$ of $(X,L)$, where $L$ is a line bundle whose algebra of multisections is generated in degree one, and satisfying the property that the bounded graded norm associated to the sections $\mathcal{L}$ is eventually generated in degree one.

\bigskip\noindent Consider then $L$ a line bundle on $X$, such that $R(X,L)$ is generated in degree one, and let $\norm$ be a lattice norm on $H^0(L)$, i.e. there exists a basis of sections $(s_i)$ of $H^0(L)$ which is orthonormal for $\norm$. Denote $\mathcal{V}_1$ the $\field^\circ$-submodule of $H^0(L)$ generated by this basis of sections, i.e. the unit ball of $\norm$. Then, the surjective symmetry morphisms $\phi_r: H^0(L)^{\odot r}\to H^0(rL)$ of $R(X,L)$ being surjective for all $r\geq 1$, $\mathcal{V}_1$ induces a $\field^\circ$-subalgebra $\mathcal{V}_\bullet$ of $R(X,L)$, which is furthermore generated in degree one, and torsion-free. The scheme
\begin{equation}\label{projconstruction}
\mathcal{X}=\mathrm{Proj}\,\,\mathcal{V}_\bullet
\end{equation}
is then flat and projective over $\field^\circ$. Let $\mathcal{L}$ be its twisting sheaf $\mathcal{O}_\mathcal{X}(1)$. $(\mathcal{X},\mathcal{L})$ is a model of $(X,L)$. Furthermore, for all $m$ large enough, $H^0(m\mathcal{L})$ coincides with $\mathcal{V}_m$ (see \cite[Ex. II-5.14]{har}). In particular, the sequence of norms
$$(\norm_{H^0(m\mathcal{L})})_m$$
is eventually generated in degree one, and the norm generated in degree one with which it eventually coincides is generated by $\norm$.

\bigskip\noindent We may then prove the following result:
\begin{prop}\label{uniformmodelapproximationprop}Assume $\field$ to be densely valued, and let $\norm_\bullet$ be generated in degree one. Then, for all $\varepsilon>0$, there exists a model $(\mathcal{X}^\varepsilon,\mathcal{L}^\varepsilon)$ of $(X,L)$, such that, for large enough $m$,
$$d_\infty(\norm_m,\norm_{H^0(m\mathcal{L}^\varepsilon)})<m\varepsilon.$$
\end{prop}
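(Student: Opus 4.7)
The plan is to approximate $\norm_1$ by a lattice norm, apply the $\Proj$ construction described just before the statement, and then use the fact that the $d_\infty$ distance between two norms generated in degree one is governed by their behaviour at $m=1$.

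More precisely, since $\field$ is densely valued, $\mathcal{N}^{\latt}(H^0(L))$ is dense in $\mathcal{N}^{\diag}(H^0(L))$, which is itself dense in $\mathcal{N}(H^0(L))$ for the $d_\infty$-topology. So I would first pick a lattice norm $\norm_1^\varepsilon$ on $H^0(L)$ with
$$d_\infty(\norm_1,\norm_1^\varepsilon)<\varepsilon.$$
Applying the construction in (\ref{projconstruction}) to the orthonormal basis of $\norm_1^\varepsilon$ produces a model $(\mathcal{X}^\varepsilon,\mathcal{L}^\varepsilon)$ of $(X,L)$, and by \cite[Ex.\ II-5.14]{har} the associated graded norm $\norm_{H^0(\bullet\mathcal{L}^\varepsilon)}$ eventually coincides with the norm $\norm^{\varepsilon,o}_\bullet$ generated in degree one by $\norm_1^\varepsilon$.

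Now both $\norm_\bullet$ and $\norm^{\varepsilon,o}_\bullet$ are generated in degree one, so Corollary \ref{gendegonedinfty} gives
$$d_\infty(\norm_\bullet,\norm^{\varepsilon,o}_\bullet)=d_\infty(\norm_1,\norm_1^\varepsilon)<\varepsilon,$$
and unravelling the definition of the asymptotic $d_\infty$ distance as a supremum yields
$$d_\infty(\norm_m,\norm^{\varepsilon,o}_m)<m\varepsilon$$
for every $m$. Combining this with the eventual coincidence $\norm_{H^0(m\mathcal{L}^\varepsilon)}=\norm^{\varepsilon,o}_m$ for $m$ large enough gives the desired inequality.

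There is no serious obstacle here: the only nontrivial ingredient is that density of lattice norms requires dense valuation of $\field$ (this is exactly the hypothesis), and that the $\Proj$-construction honestly produces a model whose sections eventually reproduce $\norm^{\varepsilon,o}_m$. Both points are already recorded in the excerpt, so the proof reduces to tying them together with Corollary \ref{gendegonedinfty}.
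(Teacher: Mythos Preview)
Your proposal is correct and follows exactly the same approach as the paper: pick a lattice norm $\varepsilon$-close to $\norm_1$ (using density over a densely valued field), apply the $\Proj$ construction (\ref{projconstruction}) to obtain a model whose graded norm is eventually generated in degree one by that lattice norm, and conclude via Corollary~\ref{gendegonedinfty} (or its reformulation in Remark~\ref{eventuallygdoneremark}). There is no substantive difference between your argument and the paper's.
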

\begin{proof}
Since $\field$ is densely valued, for all $\varepsilon>0$, there exists a lattice norm $\norm^\varepsilon$ with
$$d_\infty(\norm_1,\norm^\varepsilon)<\varepsilon.$$
Being a lattice norm, we associate to $\norm^\varepsilon$ a model $(\mathcal{X}^\varepsilon,\mathcal{L}^\varepsilon)$ as in the construction (\ref{projconstruction}) above, whose associated graded norm $\norm_{H^0(\bullet\mathcal{L}^\varepsilon)}$ eventually coincides with the norm generated in degree one by $\norm^\varepsilon$. 

\bigskip\noindent We then use the reformulation in Remark \ref{eventuallygdoneremark} of  Corollary \ref{gendegonedinfty}: since $\norm_\bullet$ and $\norm_{H^0(\bullet\mathcal{L}^\varepsilon)}$ are both eventually generated in degree one, we have that
$$d_\infty(\norm_m,\norm_{H^0(m\mathcal{L}^\varepsilon)})\leq md_\infty(\norm_1,\norm^\varepsilon)<m\varepsilon$$
for all $m$ large enough.
\end{proof}

\noindent Finally, we may prove the main Theorem of this section.

\begin{theorem}\label{uniformmodelapproximation}
Assume $\field$ to be densely valued, and assume $L$ to be such that $R(X,L)$ is generated in degree one. Let $\norm_\bullet$, $\norm'_\bullet$ be bounded graded norms generated in degree one. Then, for all $\varepsilon>0$, there exist models $(\mathcal{X}^\varepsilon,\mathcal{L}^\varepsilon)$ and $(\mathcal{Y}^\varepsilon,\mathcal{M}^\varepsilon)$ of $(X,L)$, such that:
$$\vol(\norm_{H^0(\bullet\mathcal{L^\varepsilon})},\norm_{H^0(\bullet\mathcal{M}^\varepsilon)})\to_{\varepsilon\to0} \vol(\norm_\bullet,\norm'_\bullet).$$
\end{theorem}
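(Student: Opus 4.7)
The plan is to combine the uniform approximation from Proposition \ref{uniformmodelapproximationprop} with the Lipschitz property of the relative volume (Lemma \ref{volumelipschitz}), degree by degree, and then pass to the asymptotic limit in $m$ before letting $\varepsilon\to 0$.

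More concretely, fix $\varepsilon>0$. I would first apply Proposition \ref{uniformmodelapproximationprop} to each of the two graded norms $\norm_\bullet$ and $\norm'_\bullet$ separately. This produces projective models $(\mathcal{X}^\varepsilon,\mathcal{L}^\varepsilon)$ and $(\mathcal{Y}^\varepsilon,\mathcal{M}^\varepsilon)$ of $(X,L)$ whose associated graded lattice norms $\norm_{H^0(\bullet\mathcal{L}^\varepsilon)}$ and $\norm_{H^0(\bullet\mathcal{M}^\varepsilon)}$ satisfy
$$d_\infty\bigl(\norm_m,\norm_{H^0(m\mathcal{L}^\varepsilon)}\bigr)<m\varepsilon,\quad d_\infty\bigl(\norm'_m,\norm_{H^0(m\mathcal{M}^\varepsilon)}\bigr)<m\varepsilon,$$
for all sufficiently large $m$. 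In particular, both model graded norms are bounded submultiplicative graded norms (they are finite-$d_\infty$ distance away from $\norm_\bullet$ and $\norm'_\bullet$, hence lie in $\mathcal{N}_\bullet(L)$), so their asymptotic relative volume $\vol(\norm_{H^0(\bullet\mathcal{L}^\varepsilon)},\norm_{H^0(\bullet\mathcal{M}^\varepsilon)})$ is well-defined by Theorem \ref{existslimitmeasure}.

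Next, I would apply Lemma \ref{volumelipschitz} in each degree $m$, using the triangle inequality in both arguments, to obtain
$$\bigl|\vol(\norm_m,\norm'_m)-\vol(\norm_{H^0(m\mathcal{L}^\varepsilon)},\norm_{H^0(m\mathcal{M}^\varepsilon)})\bigr|\leq d_\infty(\norm_m,\norm_{H^0(m\mathcal{L}^\varepsilon)})+d_\infty(\norm'_m,\norm_{H^0(m\mathcal{M}^\varepsilon)})<2m\varepsilon.$$
Dividing by $m$ and using the identity $\vol(\norm_\bullet,\norm'_\bullet)=\lim_m m^{-1}\vol(\norm_m,\norm'_m)$ (stated right after Remark \ref{remarkvolumes}) on both sides, I obtain
$$\bigl|\vol(\norm_\bullet,\norm'_\bullet)-\vol(\norm_{H^0(\bullet\mathcal{L}^\varepsilon)},\norm_{H^0(\bullet\mathcal{M}^\varepsilon)})\bigr|\leq 2\varepsilon.$$
Finally, letting $\varepsilon\to 0$ yields the claimed convergence.

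There is no serious obstacle here: Proposition \ref{uniformmodelapproximationprop} has already done the hard work of producing the model approximations at the level of single norms and propagating them to all graded pieces via the degree-one hypothesis (through Corollary \ref{gendegonedinfty} and Remark \ref{eventuallygdoneremark}). The only point requiring minor care is the interchange of limits in $m$ and $\varepsilon$, which is handled cleanly by the $2\varepsilon$ uniform estimate above, valid for all large $m$ simultaneously; and the verification that the model graded norms are themselves bounded, which is immediate from the $d_\infty$-comparison with the bounded graded norms $\norm_\bullet,\norm'_\bullet$.
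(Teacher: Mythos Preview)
Your proof is correct and essentially the same as the paper's. The only cosmetic difference is that the paper decomposes the volume difference via the cocycle condition into the two cross-terms $\vol(\norm_\bullet,\norm_{H^0(\bullet\mathcal{L}^\varepsilon)})$ and $\vol(\norm_{H^0(\bullet\mathcal{M}^\varepsilon)},\norm'_\bullet)$ and bounds each by $\varepsilon$, whereas you apply the Lipschitz lemma in both arguments simultaneously to get the $2\varepsilon$ bound directly; these are equivalent.
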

\begin{proof}
We pick sequences of models $(\mathcal{X}^\varepsilon,\mathcal{L}^\varepsilon)$ and $(\mathcal{Y}{}^\varepsilon,\mathcal{M}^\varepsilon)$ of $(X,L)$ as in Proposition \ref{uniformmodelapproximationprop}. Using the cocycle condition on volumes, we have that
$$\vol(\norm_\bullet,\norm'_\bullet)=\vol(\norm_\bullet,\norm_{H^0(\bullet\mathcal{L}^\varepsilon)})+\vol(\norm_{H^0(\bullet\mathcal{L}^\varepsilon)},\norm_{H^0(\bullet\mathcal{M}^\varepsilon)})+\vol(\norm_{H^0(\bullet\mathcal{M}^\varepsilon)},\norm'_\bullet),$$
so that it is then enough to prove
$$\vol(\norm_\bullet,\norm_{H^0(\bullet\mathcal{L}^\varepsilon)})\to_{\varepsilon\to 0} 0.$$
(The proof for $\mathcal{L}$ being also valid for $\mathcal{M}$.) Since volumes respect a Lipschitz property with respect to the $d_\infty$-distance (Proposition \ref{volumelipschitz}), we have
$$|\vol(\norm_\bullet,\norm_{H^0(\bullet\mathcal{L}^\varepsilon)})|=|\vol(\norm_\bullet,\norm_{H^0(\bullet\mathcal{L}^\varepsilon)}) - \vol(\norm_\bullet,\norm_\bullet)|\leq \limsup_m m\mi d_\infty(\norm_m,\norm_{H^0(m\mathcal{L}^\varepsilon)}).$$
In light of Proposition \ref{uniformmodelapproximationprop}, we then have that
$$|\vol(\norm_\bullet,\norm_{H^0(\bullet\mathcal{L}^\varepsilon)})|\leq d_\infty(\norm_1,\norm_{H^0(\mathcal{L})})< \varepsilon,$$
which concludes the proof.
\end{proof}

\section{Chebyshev transforms of graded norms.}

The final result of this section relies on volumes, which are invariant under ground field extension. We may then consider an algebraically closed, non-trivially valued field $\field$. The value group of $\field$ is then divisible, hence dense. As in the previous section, $L$ is assumed to be a semiample line bundle over a $\field$-variety $X$.

\subsection{Okounkov bodies.}\label{idk}

Let $x$ be a regular $\field$-rational point of $X$, and pick a regular sequence $(z_1,\dots,z_d)$ in the local ring $\oo_{X,x}$. By Cohen's structure theorem, any element $f\in\oo_{X,x}$ may then be written as a formal power series
$$f=\sum_{\alpha\in\mathbb{N}^d}f_\alpha z^\alpha,$$
where the coefficients $f_\alpha$ belong to the field $\field$. 
Given a monomial order $\leq$ on $\mathbb{Z}^d$, we then define a valuation
\begin{align*}
\ord_{x,\leq}:K(X)^*&\to \mathbb{Z}^d\\
f=\sum_{\alpha\in\mathbb{N}^d}f_\alpha z^\alpha&\mapsto \ord_{x,\leq}(f)=\min{}_\leq\{\alpha\in\mathbb{N}^d,\,f_\alpha\neq 0\}.
\end{align*}
Given a section $s\in H^0(X,L)$, one may pick a trivialization of $L$ at $x$, so that $s$ defines an element $s_x\in \oo_{X,x}$, and we extend the valuation $\ord$ to sections of $L$ by setting
$$\ord_{x,\leq}(s)=\ord_{x,\leq}(s_x).$$
Note that this is independent of the choice of a trivialization. We then set
$$\gr_{n,\alpha}(L)=\{s\in H^0(mL),\,\ord_{x,\leq}(s)\geq \alpha\}/\{s\in H^0(mL),\,\ord_{x,\leq}(s)> \alpha\},$$
and notice that
$$\dim_\field \gr_{n,\alpha}(L)\leq1$$
for all choices of $(n,\alpha)\in\nn^{d+1}$. We then consider the sub-semigroup $\Gamma_m(L)\subseteq\mathbb{N}^d$ consisting of the values taken by $\ord_{x,\leq}$ evaluated on the space of sections $H^0(mL)$, that is,
$$\Gamma_m(L)=\ord_{x,\leq}(H^0(mL)),$$
and the graded semigroup $\Gamma(L)\subseteq \mathbb{N}^{d+1}$ defined as the union of all the $\Gamma_m(L)$, with a $\mathbb{N}$-grading given by the tensor power of $L$ considered:
$$\Gamma(L)=\{(n,\alpha)\in\mathbb{N}\times\mathbb{N}^d,\,\alpha\in\Gamma_n(L)\}.$$
It then follows that $(n,\alpha)\in\Gamma(L)$ if and only if $\dim \gr_{n,\alpha}(L)=1$. The semigroup $\Gamma(L)$ then satisfies the following properties:
\begin{itemize}
\item[(i)] \textit{linear growth:} $\Gamma(L)$ is contained within a finitely generated sub-monoid $\langle a_1,\dots,a_k\rangle\subseteq \{1\}\times\mathbb{N}^d$, $k<\infty$;
\item[(ii)] \textit{bigness:} $\Gamma(L)$ generates $\mathbb{N}^{d+1}$ as a group. 
\end{itemize}
See \cite[L2.11, P3.3]{boubbk} for a precise justification of these facts. It follows that the base
$$\Delta(\Gamma(L))=\overline{\mathrm{Cone}}(\Gamma(L))\cup(\{1\}\times\rr^d)$$
of the convex cone generated by $\Gamma(L)$ inside $\rr^{d+1}$ defines a convex body, that is, a subset of $\rr^d$ satisfying the following properties:
\begin{itemize}
\item $\Delta(\Gamma(L))$ is compact and convex;
\item the interior $\Delta(\Gamma(L))^o$ is nonempty.
\end{itemize}
In fact, we may associate such a convex body $\Delta(\Gamma)$ to any semigroup $\Gamma\subseteq \mathbb{N}^{d+1}$ satisfying the properties $(i)-(ii)$ above. (See also \cite[2.A]{macleanroe} for a definition of Okounkov bodies with minimal conditions on $\Gamma$.) We call $\Delta(\Gamma)$ the \textbf{Okounkov body} of the semigroup $\Gamma$.

\subsection{The asymptotic spectral measure.}

\noindent Consider as before a monomial order $\leq$ on $\nn^d$, a regular rational point $x\in X$, and a regular system of parameters. Let $(n,\alpha)$ belong to $\Gamma(L)$, which as we recall holds if and only if the graded piece $\gr_{n,\alpha}$ to be one-dimensional. Hence, given a trivialization $\tau_x$ of $L$ at $x$, there exist sections $s\in H^0(nL)$ with Taylor expansion
$$s=z^\alpha + \sum_{\beta\geq \alpha}s_\beta z^\beta$$
with respect to $\tau_x^n$. Given a bounded graded norm $\norm_\bullet$, the individual norm $\norm_n$ induces a quotient norm $\norm_{n,\alpha}$ on $\gr_{n,\alpha}(L)$. Given any $s$ with a Taylor expansion as above, it is immediate that its class $[s]_{n,\alpha}=:s_{n,\alpha}$ in $\gr_{n,\alpha}(L)$ contains all such sections, and we define
\begin{align*}
\Phi:\Gamma(L)&\to \rr\\
n,\alpha&\mapsto -\log\left[\,\norm_{n,\alpha}\left(s_{n,\alpha}\right)\,\right].
\end{align*}
By submultiplicativity of $\norm_\bullet$ and the fact that $$s_{n,\alpha}\cdot s_{m,\beta}=s_{n+m,\alpha+\beta}$$
in the algebra
$$\bigoplus_{n\in\mathbb{N}}\bigoplus_{\alpha\in\Gamma_k(L)}\gr_{n,\alpha}(L),$$
the function $\Phi$ so defined is then superadditive.

\bigskip

\noindent We now prove existence of the limit spectral measure between two bounded graded norms. This Theorem has originally been proved in \cite[T5.2]{cmac}, albeit in a slightly different mathematical language, and some parts have been further developed in \cite[T9.5]{boueri}. We write out the proof in "our language" below.

\begin{proof}By \cite[T5.2]{cmac}, such a limit measure exists provided the hypotheses of \cite[T4.5]{cmac} are verified. We state them now:
\begin{itemize}
\item[(1)] $\norm_\bullet$ and $\norm'_\bullet$ are submultiplicative graded norms;
\item[(2)] $\lim_{m\to\infty} m\mi d_\infty(\norm_m,\norm'_m)<\infty$;
\item[(3)] there exists uniform positive constant $C$ such that
$$\inf_{\alpha\in\Gamma_n(L)}\log\norm_{n,\alpha}(s_{n,\alpha})\geq -Cn,$$
and similarly for $\norm'_\bullet$.
\end{itemize}

\bigskip

\noindent The two first criteria $(1)$ and $(2)$ are by definition true since $\norm_\bullet$ and $\norm'_\bullet$ are bounded graded norms. What remains is to prove $(3)$ which is equivalent to showing that $(n,\alpha)\mapsto\Phi(n,\alpha)$ is \textit{linearly bounded above} in the first variable, i.e. there exists a uniform positive constant $C$ such that
$$\Phi(n,\cdot)\leq Cn.$$
Since the (quotient) norm $\norm_{n,\alpha}$ is characterized as an $\inf$ on all sections $s\in H^0(nL)$ with Taylor expansion at $x$ of the form
\begin{equation}\label{taylor}s=z^\alpha + \sum_{\beta\geq \alpha}s_\beta z^\beta,
\end{equation}
we have that, if
$$-\log \norm_n(s)\leq Cn$$
is true for all such $s$, then this is also true for the $\inf$ on all such $s$, $\Phi(n,\alpha)=-\log\norm_{n,\alpha}(s_{n,\alpha})$. In fact, by the finite growth property of $\Gamma(L)$, we know that there exists a uniform positive constant $C'$ such that
\begin{equation}\label{boundedgrowthsemigroup}\alpha\in\Gamma_n(L)\Rightarrow |\alpha|\leq C'n.
\end{equation}
Thus, it is enough to show that
\begin{equation}\label{ineq}-\log \norm_n(s)\leq C(n+|\alpha|)
\end{equation}
for all $s$ with Taylor expansion as in (\ref{taylor}). We then follow the proof of \cite[T9.5]{boueri}, itself based on \cite[L5.4]{dwn}. We first reduce to the case where $\norm_\bullet$ is the supnorm $\norm_{\bullet\phi}$ associated to a bounded metric $\phi$, for $\norm_\bullet$ is bounded. This implies that for some bounded metric $\phi$,
$$\lim_m m\mi d_\infty(\norm_m,\norm_{m\phi})=D<\infty$$
so that if (\ref{ineq}) is true for $\norm_{\bullet\phi}$ then, as for all large enough $n$ we have
$$e^{-Dn}\norm_{n\phi}\leq \norm_n \leq e^{Dn}\norm_{n\phi},$$
it follows that
$$Dn - \log\norm_{n\phi}(s) \geq -\log \norm_n(s) \geq -Dn - \log\norm_{n\phi}(s),$$
and the equivalent of (\ref{ineq}) for $\norm_\bullet$ is deduced from this, and from (\ref{ineq}).

\bigskip

\noindent We then assume that $\norm_\bullet=\norm_{\bullet\phi}$ for some bounded metric $\phi$ on $L$. Now, we know that we can find a trivialization $\tau_x$ of $L$ and analytic isomorphisms from a neighborhood $U$ of a regular rational point $x\in X$ to an open polydisc $\mathbb{D}=\prod_1^d\mathbb{D}(r_i)\subset \field^d$, such that a section $s\in H^0(nL)$ satisfies
$$\log |s|_{n\phi}=\log |s_U| + n\log|\tau_x|_\phi,$$
for some analytic function $s_U$ of the form
$$f(z)=z^\alpha + \sum_{\beta\geq \alpha}s_\beta z^\beta.$$
Since $\phi$ is bounded on $U$, so is the term $n\log|\tau_x|_\phi$, and by the maximum principle, applied in each variable, we have that
$$r^{|\alpha|}\leq |f|$$
on $U$, and finally (\ref{ineq}) follows, concluding the proof of the Theorem.
\end{proof}

\subsection{A Fujita approximation lemma.}

\noindent In this subsection, we prove a result which is similar in spirit to Fujita's approximation Theorem. We first start by quoting the following standard result:

\begin{theorem}[{\cite[L1.13]{boubbk}}]\label{theoremcompactokounkov}
Let $\Gamma$ be a graded sub-semigroup of $\nn^{d+1}$ satisfying conditions (i)-(ii) of \ref{idk}, and let $K$ be a compact convex subset of $\rr^d$ contained in the interior of $\Delta(\Gamma_\bullet)$. For all large enough integers $m$, we then have that:
$$K\cap \frac{\Gamma_m}{m} = K \cap \frac{\mathbb{Z}^d}{m},$$
where $\Gamma_m$ is defined again as in \ref{idk}.
\end{theorem}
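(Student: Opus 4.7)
The inclusion $K\cap \Gamma_m/m \subseteq K\cap \mathbb{Z}^d/m$ is automatic, since $\Gamma_m\subseteq \mathbb{N}^d\subseteq \mathbb{Z}^d$. Hence the content of the statement is the reverse inclusion for $m$ large: every integer point of $m\cdot K$ must actually come from $\Gamma_m$. The plan is to reduce to a Khovanskii-type completeness result for finitely generated sub-semigroups of $\mathbb{N}^{d+1}$ generating $\mathbb{Z}^{d+1}$ as a group.

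The first step is to replace $\Gamma$ by a finitely generated sub-semigroup with the same Okounkov body. Using the bigness assumption (ii), I would pick finitely many elements $\gamma_1,\dots,\gamma_N\in\Gamma$ whose $\mathbb{Z}$-span equals $\mathbb{Z}^{d+1}$, and augment them with further elements of $\Gamma$ chosen dense enough in $\overline{\mathrm{Cone}}(\Gamma)$ that the finitely generated sub-semigroup $\Gamma'=\langle\gamma_1,\dots,\gamma_N\rangle\subseteq\Gamma$ satisfies $\Delta(\Gamma')=\Delta(\Gamma)$. The linear growth property (i) — namely that $\Gamma$ is already contained in a finitely generated monoid of $\{1\}\times\mathbb{N}^d$ — ensures that finitely many generators suffice to capture the full Okounkov body.

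The second step is the classical Khovanskii completeness theorem applied to $\Gamma'$: for a finitely generated sub-semigroup of $\mathbb{Z}^{d+1}$ generating it as a group, the complement $(\mathrm{Cone}(\Gamma')\cap\mathbb{Z}^{d+1})\setminus \Gamma'$ is contained in a finite union of translates of proper sub-lattices, all hugging the boundary of $\mathrm{Cone}(\Gamma')$. In its quantitative form: for any compact set $K''\subset\mathrm{int}(\Delta(\Gamma'))$, there exists $m_0$ such that for all $m\geq m_0$, every $\alpha\in mK''\cap\mathbb{Z}^d$ satisfies $(m,\alpha)\in\Gamma'$. This is precisely the statement one needs; it is essentially the content cited from \cite{boubbk}.

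To conclude, choose a compact set $K''\subseteq\mathrm{int}(\Delta(\Gamma))$ containing $K$ in its interior. For $m\geq m_0$, any $\alpha/m\in K\cap \mathbb{Z}^d/m$ lies in $K''$, whence $(m,\alpha)\in\Gamma'\subseteq\Gamma$ and $\alpha\in\Gamma_m$. This gives the missing inclusion. The main obstacle in fleshing this out is the quantitative form of Khovanskii's theorem — in particular, controlling the distance (after rescaling by $1/m$) between the compact set $K$ and the exceptional boundary strips produced by the theorem — but this is classical and follows once one unpacks the geometry of the finitely many generators chosen in the first step.
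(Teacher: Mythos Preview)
The paper does not prove this theorem: it is quoted verbatim as a standard result from \cite[L1.13]{boubbk}, so there is no in-paper argument to compare against. Your outline is the standard one and is essentially what is done in the cited reference: reduce to a finitely generated sub-semigroup and invoke Khovanskii's structure theorem.

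There is one overclaim in your first step that you should correct. You assert that by choosing finitely many generators ``dense enough'' one can arrange $\Delta(\Gamma')=\Delta(\Gamma)$. This is false in general: for a finitely generated $\Gamma'$ the body $\Delta(\Gamma')$ is a rational polytope, whereas $\Delta(\Gamma)$ need not be (condition (i) only gives boundedness, not polyhedrality). What you actually need, and what is genuinely achievable, is the weaker statement $K\subset\Delta(\Gamma')^\circ$. Since $K$ is compact in the open set $\Delta(\Gamma)^\circ$, and $\Delta(\Gamma)$ is the closure of the union $\bigcup_m m^{-1}\Gamma_m$, finitely many elements of $\Gamma$ suffice to produce a sub-semigroup $\Gamma'$ whose Okounkov body contains $K$ in its interior and which still generates $\mathbb{Z}^{d+1}$ as a group. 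With this correction your second and third steps go through unchanged, and the intermediate compact $K''$ in the last paragraph becomes unnecessary.
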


\noindent We now prove said approximation result, which can also be seen as an \textit{ad hoc} version of \cite[L1.21]{boubbk}.

\begin{lemma}\label{fujita}Let $\Gamma^k$ be a graded sub-semigroup of some semigroup $\Gamma\subseteq \rr^{d+1}$, such that:
\begin{itemize}
\item $\Gamma_1^k=\Gamma_k$,
\item $\Gamma_r^k\subseteq \Gamma_{kr}$ for all $r\geq 1$,
\item $\Gamma$ is admissible.
\end{itemize}
Then
$$k^{-d}\vol(\Delta(\Gamma_\bullet^k))\to_{k\to\infty}\vol(\Delta(\Gamma_\bullet)).$$
\end{lemma}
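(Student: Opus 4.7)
The plan is to prove both inequalities $\limsup_k k^{-d}\vol(\Delta(\Gamma_\bullet^k))\leq \vol(\Delta(\Gamma_\bullet))$ and $\liminf_k k^{-d}\vol(\Delta(\Gamma_\bullet^k))\geq \vol(\Delta(\Gamma_\bullet))$ separately, then conclude by squeezing. The two conditions $\Gamma_r^k\subseteq\Gamma_{kr}$ and $\Gamma_1^k=\Gamma_k$ play complementary roles, yielding respectively the upper and lower bound.

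For the upper bound, I would unpack the construction of the Okounkov body. A generic point of $\Delta(\Gamma_\bullet^k)$ is the limit of ratios $\alpha/r$ with $\alpha\in\Gamma_r^k$; since $\Gamma_r^k\subseteq\Gamma_{kr}$, this rewrites as $\alpha/r=k\cdot(\alpha/(kr))$ with $\alpha/(kr)$ an element of $\Delta(\Gamma_\bullet)$. Hence $k^{-1}\Delta(\Gamma_\bullet^k)\subseteq\Delta(\Gamma_\bullet)$, and passing to Euclidean volumes on $\rr^d$ yields $k^{-d}\vol(\Delta(\Gamma_\bullet^k))\leq \vol(\Delta(\Gamma_\bullet))$.

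For the lower bound, I would use $\Gamma_1^k=\Gamma_k$: every $\alpha\in\Gamma_k$ gives the point $(1,\alpha)\in\Gamma^k$, so $\alpha$ lies in $\Delta(\Gamma_\bullet^k)\subseteq\rr^d$. By convexity, $\mathrm{conv}(\Gamma_k)\subseteq\Delta(\Gamma_\bullet^k)$. Now fix any compact convex body $K$ contained in the interior of $\Delta(\Gamma_\bullet)$. By Theorem \ref{theoremcompactokounkov} applied with $m=k$, for all $k$ large enough one has $kK\cap\mathbb{Z}^d \subseteq \Gamma_k$, and hence $\mathrm{conv}(kK\cap\mathbb{Z}^d)\subseteq\Delta(\Gamma_\bullet^k)$. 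The remaining ingredient is the classical lattice-approximation fact that for any convex body $K\subseteq\rr^d$ with nonempty interior,
$$k^{-d}\vol\bigl(\mathrm{conv}(kK\cap\mathbb{Z}^d)\bigr)\xrightarrow[k\to\infty]{}\vol(K),$$
which one sees by fixing an interior center, shrinking to $(1-\eta)K$ for small $\eta>0$, and checking that for $k$ large every point of $k(1-\eta)K$ is a convex combination of nearby lattice points in $kK$. Combining this with the above inclusion gives $\liminf_k k^{-d}\vol(\Delta(\Gamma_\bullet^k))\geq \vol(K)$, and exhausting $\Delta(\Gamma_\bullet)^o$ by such compact convex $K$ (possible since $\Delta(\Gamma_\bullet)$ is a convex body with nonempty interior by admissibility) yields the matching lower bound.

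The only non-formal step is the lattice-approximation estimate at the end of the lower bound; this is a well-known classical statement in convex geometry, which I would cite rather than reprove. Everything else is a direct consequence of the hypotheses on $\Gamma^k$ and the basic structural properties of Okounkov bodies recalled in \S\ref{idk} together with Theorem \ref{theoremcompactokounkov}.
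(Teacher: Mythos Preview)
Your proposal is correct and follows essentially the same approach as the paper: the upper bound via the inclusion $k^{-1}\Delta(\Gamma_\bullet^k)\subseteq\Delta(\Gamma_\bullet)$ is identical, and the lower bound via Theorem \ref{theoremcompactokounkov} together with exhaustion by compact convex subsets of $\Delta(\Gamma_\bullet)^o$ matches the paper's argument. The only cosmetic difference is that the paper absorbs your ``lattice-approximation fact'' directly into the argument by choosing two nested compacts $K\subset L\subset\Delta(\Gamma_\bullet)^o$ with $d(K,\partial L)>0$, which yields $K\subset k^{-1}\mathrm{Hull}(\Gamma_k)\subset k^{-1}\Delta(\Gamma_\bullet^k)$ for large $k$ without citing an external estimate; this is exactly your $(1-\eta)K\subset K$ sketch, just inlined.
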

\begin{proof}First remark that, by the inclusion property
$$\Gamma_r^k\subseteq \Gamma_{kr},$$
we have that, for all $k\geq 1$,
$$\frac{\Delta(\Gamma^k_\bullet)}{k}\subseteq\Delta(\Gamma_\bullet).$$
If we can show that any compact (convex) subset $K$ of $\Delta(\Gamma_\bullet)^o$ is also included in $\frac{\Delta(\Gamma^k_\bullet)}{k}$ for large enough $k$, then our assertion would be true. Pick such a compact $K$, and embed it into another compact convex subset $L\subset\Delta(\Gamma_\bullet)^o$ such that the number
$$d(K,\partial L)=\inf \,\{d(x,\ell),\,x\in K,\,\ell\in \partial L\}$$
is (strictly) positive. We then have compact inclusions
$$K\subset L \subset \Delta(\Gamma_\bullet)^o,$$
with $K$ not "touching" the boundary of $L$.

\noindent By admissibility, $\Gamma_\bullet$ generates $\mathbb{Z}^{d+1}$ as a group. Then, the regularization of $\Gamma_k$ is $\mathbb{Z}^d$, whence, for all large enough $k$,
$$\left(L\cap \frac{\Gamma_k}{k}\right) = \left(L\cap \frac{\mathbb{Z}^d}{k}\right),$$
(by Theorem \ref{theoremcompactokounkov}), so that the convex hull of $\left(\frac{\Gamma_k}{k}\right)$ naturally contains $K$. (It does not necessarily contain $L$.) Now, since
$$\Gamma_1^k=\Gamma_k,$$
the convex hull of $\left(\frac{\Gamma_k}{k}\right)$ is contained in the scaled Okounkov body
$$\frac{\Delta(\Gamma^k_\bullet)}{k}.$$
To conclude, we have a chain of compact inclusions
$$K\subset \mathrm{Hull}\left(\frac{\Gamma_k}{k}\right) \subset \frac{\Delta(\Gamma^k_\bullet)}{k},$$
from which follows the desired inclusion of $K$.
\end{proof}

\noindent In practice, we will consider $\Gamma=\Gamma(L)$, the Okounkov semigroup associated to the choice of a semiample line bundle $L$ over $X$, a monomial order $\leq$ on $\nn^d$, and a regular rational point $x\in X$.

\subsection{Chebyshev functions associated to superadditive functions.}\label{superaddoko}

\begin{prop}[{\cite[L4.1, T4.3]{cmac}}]Assume $\Phi$ is a superadditive function
$$\Phi:\Gamma(L)\to \rr,$$
such that $\Phi(0,0_{\nn^d})=0$. For any $t\in\rr$, set
$$\Gamma^{\geq t}_\Phi=\{(n,\alpha)\in \Gamma(L),\,\Phi(n,\alpha)\geq n\cdot t\}.$$
Then, $\Gamma^{\Phi,\geq t}$ sub-semigroup of $\Gamma(L)$ satisfying properties (i)-(ii) of \ref{idk} whenever
$$t<\theta=\lim_{n\to\infty}\sup_{\alpha\in\Gamma_n(L)}n\mi \Phi(n,\alpha).$$
\end{prop}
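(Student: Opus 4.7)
The plan is to verify the three required properties in order: (a) $\Gamma_\Phi^{\geq t}$ is a sub-semigroup of $\Gamma(L)$, (b) it satisfies the linear-growth condition (i), and (c) it satisfies the bigness condition (ii). Of these, only (c) requires real work.

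For (a), if $(n,\alpha)$ and $(m,\beta)$ both lie in $\Gamma_\Phi^{\geq t}$, superadditivity of $\Phi$ yields
$$\Phi(n+m,\alpha+\beta) \geq \Phi(n,\alpha) + \Phi(m,\beta) \geq (n+m)\,t,$$
so the sum remains in $\Gamma_\Phi^{\geq t}$. Property (b) is automatic since $\Gamma_\Phi^{\geq t}\subseteq\Gamma(L)$ and linear growth passes to subsets.

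For (c), I would first observe that the superadditivity of $\Phi$, together with $\Phi(0,0)=0$, makes the sequence $n\mapsto \sup_{\alpha\in\Gamma_n(L)}\Phi(n,\alpha)$ itself superadditive in $n$. By Fekete's lemma the defining limit exists and equals the supremum, so given $t<\theta$ I can choose $\delta>0$ with $t+\delta<\theta$ and fix a pair $(n_0,\alpha_0)\in\Gamma(L)$ with $\Phi(n_0,\alpha_0)\geq (t+\delta)n_0$. Iterating superadditivity yields $\Phi(kn_0,k\alpha_0)\geq k(t+\delta)n_0$, so $(kn_0,k\alpha_0)\in\Gamma_\Phi^{\geq t}$ for every $k\geq 1$.

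Now exploit bigness of $\Gamma(L)$: since $\Gamma(L)$ generates $\mathbb{Z}^{d+1}$ as a group, I select a finite family $\{(n_i,\alpha_i)\}_{i=1}^N\subseteq\Gamma(L)$ that already generates $\mathbb{Z}^{d+1}$. The aim is to shift each $(n_i,\alpha_i)$ into $\Gamma_\Phi^{\geq t}$ by adding a large multiple of $(n_0,\alpha_0)$. By superadditivity,
$$\Phi(n_i+kn_0,\alpha_i+k\alpha_0)\geq \Phi(n_i,\alpha_i)+kn_0(t+\delta),$$
so the condition $\Phi(n_i+kn_0,\alpha_i+k\alpha_0)\geq (n_i+kn_0)\,t$ reduces to $kn_0\delta\geq n_it-\Phi(n_i,\alpha_i)$. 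Since the family is finite, the right-hand side is uniformly bounded, and a single $k$ large enough works for all $i$. For such a $k$, both $(kn_0,k\alpha_0)$ and $(n_i+kn_0,\alpha_i+k\alpha_0)$ belong to $\Gamma_\Phi^{\geq t}$, so their differences $(n_i,\alpha_i)$ lie in the group generated by $\Gamma_\Phi^{\geq t}$. As these differences generate $\mathbb{Z}^{d+1}$, so does $\Gamma_\Phi^{\geq t}$.

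The main obstacle is (c): one has to convert the purely asymptotic information $t<\theta$ into an effective uniform statement. The key mechanism is that the gap $\delta$ produced by the strict inequality $t<\theta$, amplified by taking $k$ large, supplies enough positive slack to absorb the finitely many possibly negative values $\Phi(n_i,\alpha_i)$ on a chosen generating family. The finiteness of that family is essential, which is why bigness of $\Gamma(L)$ is used exactly in the form "finitely many elements generating $\mathbb{Z}^{d+1}$."
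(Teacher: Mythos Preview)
Your argument is correct. The paper itself does not supply a proof of this proposition; it is quoted directly from \cite[L4.1, T4.3]{cmac}. Your verification of (a) and (b) is immediate, and your proof of (c) --- fixing a single element $(n_0,\alpha_0)$ with slope strictly above $t$ via Fekete, then translating a finite generating family of $\Gamma(L)$ by a large multiple of it so that the translates land in $\Gamma_\Phi^{\geq t}$ --- is exactly the standard mechanism used in \cite{cmac} (and in the filtered-Okounkov-body literature more broadly, e.g.\ \cite{boucksomchen}). One cosmetic remark: the supremum defining $M_{n_0}=\sup_{\alpha\in\Gamma_{n_0}(L)}\Phi(n_0,\alpha)$ is actually attained here because $\Gamma_{n_0}(L)$ is finite (it has $h^0(n_0L)$ elements), so you may take $\alpha_0$ realizing the supremum rather than approximating it; this slightly streamlines the choice of $\delta$ but changes nothing in substance.
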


\begin{remark}It is immediate that
$$l<t\Rightarrow \Gamma^{\Phi,\geq l}\subseteq \Gamma^{\Phi,\geq t}.$$
\end{remark}

\begin{defi}
Let $\Phi$ be a superadditive function on $\Gamma(L)$. We set
$$G_\Phi:\Delta(\Gamma(L))\to \rr\cup \{-\infty\},$$
$$(n,\alpha)\mapsto \sup \{t\in\rr\cup\{-\infty\},\,(n,\alpha)\in \Delta(\Gamma^{\Phi,\geq t})\}.$$
The function $G_\Phi$ is the \textbf{Chebyshev function} of the semigroup $\Gamma(L)$ (associated to $\Phi$). The term \textit{concave transform} is also common in the literature, see e.g. \cite{dwn} and \cite{macleanroe}.
\end{defi}

\begin{remark}By \cite{boucksomchen} this function is concave, hence continuous, on the interior of $\Delta(\Gamma(L))$.
\end{remark}

\subsection{An equidistribution result.}

\noindent We rely on a first result taken from \cite{cmac}, which can be seen as an equidistribution Theorem for the values of a superadditive function defined on an Okounkov body. We in particular obtain a limit measure, and a sequence of such limit measures is the main character of a Proposition proven below.
\begin{theorem}[{\cite[T4.3, R4.4]{cmac}}]\label{integralvolume}Let $\Phi$ be a superadditive function from $\Gamma(L)$ to $\rr$, with $\limsup$ denoted $\theta$ as in the previous subsection. Let $\mu(k)$ be the finitely supported probability measure on $\rr$ defined as
$$\mu(k)=\sum_{\alpha\in\Gamma_k(L)}\delta_{k\mi \Phi(k,\alpha)}.$$
This sequence then converges to a compactly supported probability measure $\mu$ on $\rr$ satisfying
$$\mu([t,\infty))=\frac{\vol(\Delta(\Gamma^{\Phi,\geq t}_\bullet))}{\vol(\Delta(\Gamma_\bullet))},$$
for any $t\leq \theta$. Furthermore, $\mu$ is equal to the pushforward of the normalized Lebesgue measure on the Okounkov body $\Delta(\Gamma(L))$ by the Chebyshev function $G_\Phi$.
\end{theorem}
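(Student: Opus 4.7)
The plan is to prove weak convergence $\mu(k)\to\mu$ by first pinning down the convergence of the tail distribution functions $t\mapsto \mu(k)([t,\infty))$, and then to identify the limit with the claimed pushforward. Reading $\mu(k)$ as the normalized probability measure $\#\Gamma_k(L)^{-1}\sum_{\alpha\in\Gamma_k(L)}\delta_{k^{-1}\Phi(k,\alpha)}$, uniform compact support of the family $\{\mu(k)\}_k$ follows from the upper bound $k^{-1}\Phi(k,\alpha)\leq \theta + o(1)$, built into the definition of $\theta$, together with the linear lower bound $\Phi(k,\alpha)\geq -Ck$ available in the setting of interest (cf.\ condition (3) established in the proof of Theorem \ref{existslimitmeasure}).

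The core computation is, for each $t<\theta$,
$$\mu(k)\bigl([t,\infty)\bigr)=\frac{\#\Gamma^{\Phi,\geq t}_k}{\#\Gamma_k(L)},$$
to which I would apply the standard Okounkov-body lattice-point asymptotics
$$\frac{\#\Gamma_k(L)}{k^d}\longrightarrow \vol(\Delta(\Gamma(L))),\qquad \frac{\#\Gamma^{\Phi,\geq t}_k}{k^d}\longrightarrow \vol\bigl(\Delta(\Gamma^{\Phi,\geq t}_\bullet)\bigr).$$
The second limit is legitimate because, by the Proposition opening Section \ref{superaddoko}, $\Gamma^{\Phi,\geq t}_\bullet$ is a graded sub-semigroup satisfying properties (i)--(ii) of Section \ref{idk} for all $t<\theta$; the count asymptotic itself then follows by the usual sandwich, the upper bound from the set inclusion $\Gamma^{\Phi,\geq t}_k/k \subset \Delta(\Gamma^{\Phi,\geq t}_\bullet)$ and the lower bound via Theorem \ref{theoremcompactokounkov}, by exhausting $\Delta(\Gamma^{\Phi,\geq t}_\bullet)^o$ with compact convex sets. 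Dividing yields
$$\mu(k)\bigl([t,\infty)\bigr)\longrightarrow F(t):=\frac{\vol(\Delta(\Gamma^{\Phi,\geq t}_\bullet))}{\vol(\Delta(\Gamma(L)))}.$$

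The function $F$ is non-increasing, tends to $1$ at $-\infty$ and to $0$ as $t\to\theta^-$, so determines a unique compactly supported probability measure $\mu$ with tail $F$. Tightness of $\{\mu(k)\}$ together with pointwise convergence $\mu(k)([t,\infty))\to F(t)$ at each continuity point of $F$ --- a dense set --- then yields $\mu(k)\to\mu$ weakly by the portmanteau theorem. Finally, the Chebyshev pushforward identification is transparent: by the very definition of $G_\Phi$, the superlevel set $\{G_\Phi\geq t\}\subset \Delta(\Gamma(L))$ coincides with $\Delta(\Gamma^{\Phi,\geq t}_\bullet)$ up to a Lebesgue-negligible boundary, so the pushforward $(G_\Phi)_*\mathrm{Leb}/\vol(\Delta(\Gamma(L)))$ has the same tail function $F$, and therefore equals $\mu$.

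The main obstacle I anticipate is the lower bound in the count asymptotic $\#\Gamma^{\Phi,\geq t}_k/k^d \to \vol(\Delta(\Gamma^{\Phi,\geq t}_\bullet))$: it requires Theorem \ref{theoremcompactokounkov}, which in turn rests on $\Gamma^{\Phi,\geq t}_\bullet$ being \emph{big} in the sense (ii) of \ref{idk}, i.e.\ generating $\mathbb{Z}^{d+1}$ as a group --- the nontrivial content of the Proposition in \ref{superaddoko}, and the reason one must insist on strict inequality $t<\theta$ rather than $t\leq\theta$ in the argument, handling the boundary value $t=\theta$ separately by continuity of $F$ from below.
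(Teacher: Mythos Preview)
The paper does not supply its own proof of this theorem: it is stated as a citation of \cite[T4.3, R4.4]{cmac} and used as a black box thereafter. There is therefore no in-paper argument to compare against.

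That said, your sketch is a faithful reconstruction of the Chen--Maclean argument. The identification $\mu(k)([t,\infty)) = \#\Gamma^{\Phi,\geq t}_k / \#\Gamma_k(L)$, followed by the lattice-point asymptotics $\#\Gamma_k/k^d \to \vol(\Delta(\Gamma_\bullet))$ applied separately to numerator and denominator, is exactly the mechanism of \cite[T4.3]{cmac}; your observation that the sub-semigroup $\Gamma^{\Phi,\geq t}_\bullet$ must first be shown to satisfy conditions (i)--(ii) of Section~\ref{idk} --- which is the content of the Proposition opening Section~\ref{superaddoko}, i.e.\ \cite[L4.1]{cmac} --- is precisely the nontrivial input, and your care over the strict inequality $t<\theta$ is correct. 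The pushforward identification via the superlevel-set equality $\{G_\Phi\geq t\} = \Delta(\Gamma^{\Phi,\geq t}_\bullet)$ up to Lebesgue-null sets is the content of \cite[R4.4]{cmac}. Your proposal is correct and matches the cited source.
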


\begin{defi}
If $\Phi$ is a superadditive function defined on an Okounkov body, associated to a bounded graded norm $\norm_\bullet$ as before, we denote the limit measure obtained in the previous Theorem by
$$\mu(\norm_\bullet),$$
and the measures  $\mu(k)$ as
$$\mu(\norm_k).$$
\end{defi}

We now state the main technical result of this section.

\begin{prop}\label{mainthmsectionfour}
Let $L$ be such that $R(X,L)$ is generated in degree one. Let $\norm_\bullet$ be a bounded graded norm on $R(X,L)$. Consider, for each $k\in\mathbb{N}^*$, the bounded graded norm $\norm^{(k)}_\bullet$ on $R(X,kL)$ generated in degree one by $\norm_k$, i.e. the sequence of quotient norms induced by $\norm_k$ and the symmetry morphisms
$$H^0(kL)^{\odot r}\twoheadrightarrow H^0(rkL)$$
for all $r\in\mathbb{N}^*$. Set
$$\Gamma(kL)=\{(n,\alpha)\in \Gamma(L),\,k|n\}.$$
We then have 
$$\mu(\norm_\bullet^{(k)})\rightharpoonup_{k\to\infty}\mu(\norm_\bullet),$$
where $\rightharpoonup$ denotes weak convergence of measures, in particular: the sequence of functions $t\mapsto \int_{-\infty}^t\,d\mu(\norm^{(k)}_\bullet)$ converges pointwise to $t\mapsto \int_{-\infty}^t\,d\mu(\norm_\bullet)$.
\end{prop}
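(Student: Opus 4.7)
The plan is to compare the Okounkov sub-bodies appearing in the survival-function formula of Theorem \ref{integralvolume} for $\norm_\bullet$ and $\norm_\bullet^{(k)}$, by means of the Fujita-type approximation Lemma \ref{fujita}. Let $\Phi$ on $\Gamma(L)$ (from Section \ref{phi}) denote the superadditive function of $\norm_\bullet$, and $\Phi^{(k)}$ on $\Gamma(kL)$ the analogue for $\norm_\bullet^{(k)}$. Two comparisons will be crucial: the identity
$$\Phi^{(k)}(k,\alpha) = \Phi(k,\alpha), \quad \alpha \in \Gamma_k(L),$$
which follows because the first degree of $\norm_\bullet^{(k)}$ on $R(X,kL)$ is simply $\norm_k$, while the graded piece of $R(X,kL)$ at $(k,\alpha)$ coincides as a $\field$-line with $\gr_{k,\alpha}(L)$; and the inequality
$$\Phi^{(k)}(rk,\alpha) \leq \Phi(rk,\alpha),$$
which follows from submultiplicativity of $\norm_\bullet$, giving $\norm_{rk} \leq \norm_r^{(k)}$ on $H^0(rkL)$, then passing to the quotient on $\gr_{rk,\alpha}$.

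Next, I fix $t < \theta$ (with $\theta$ as in Section \ref{superaddoko}) so that the sub-semigroup $\Gamma := \Gamma^{\Phi,\geq t}$ is admissible. I introduce the graded sub-semigroup $\widetilde{\Gamma}^k$ of $\Gamma$ defined by
$$\widetilde{\Gamma}^k_r := \{\alpha \in \Gamma_{rk}(L) : \Phi^{(k)}(rk,\alpha) \geq rkt\}.$$
The equality above gives $\widetilde{\Gamma}^k_1 = \Gamma_k$, and the inequality gives $\widetilde{\Gamma}^k_r \subseteq \Gamma_{kr}$, so that the hypotheses of Lemma \ref{fujita} are met and
$$k^{-d}\vol(\Delta(\widetilde{\Gamma}^k_\bullet)) \to \vol(\Delta(\Gamma^{\Phi,\geq t})).$$
Under the bijection $r\leftrightarrow rk$, the semigroup $\widetilde{\Gamma}^k$ identifies with the sub-semigroup $\Gamma(kL)^{\Phi^{(k)},\geq t} := \{(n,\alpha)\in\Gamma(kL) : \Phi^{(k)}(n,\alpha) \geq nt\}$ of $\Gamma(kL)$; the corresponding slicing at first coordinate equal to one introduces a factor $k$, so $\vol(\Delta(\widetilde{\Gamma}^k_\bullet)) = k^d \vol(\Delta(\Gamma(kL)^{\Phi^{(k)},\geq t}))$, and hence $\vol(\Delta(\Gamma(kL)^{\Phi^{(k)},\geq t})) \to \vol(\Delta(\Gamma^{\Phi,\geq t}))$.

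To conclude, the cones generated by $\Gamma(L)$ and $\Gamma(kL)$ coincide (via $s\mapsto s^k$ one sees that $(n,\alpha)\in\Gamma(L)$ yields $(kn,k\alpha)\in \Gamma(kL)$), so $\Delta(\Gamma(kL)) = \Delta(L)$. Theorem \ref{integralvolume} therefore gives
$$\mu(\norm_\bullet^{(k)})([t,\infty)) \longrightarrow \mu(\norm_\bullet)([t,\infty))$$
for every $t<\theta$; for $t\geq\theta$ both sides vanish. Both measures being probability measures with uniformly bounded support, pointwise convergence of survival functions implies weak convergence.

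The main technical obstacle lies in the bookkeeping of grading conventions: one must place the factor $k$ carefully in the threshold (writing $\Phi^{(k)}(rk,\alpha) \geq rkt$ rather than $\geq rt$) for the degree-matching hypothesis $\widetilde{\Gamma}^k_1 = \Gamma_k$ of Lemma \ref{fujita} to hold, and then properly account for the factor $k^d$ that arises when comparing the two slicings of the relevant cone.
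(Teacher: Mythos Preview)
Your proof is correct and follows essentially the same route as the paper: establish the comparisons $\Phi^{(k)}(k,\alpha)=\Phi(k,\alpha)$ and $\Phi^{(k)}(rk,\alpha)\le\Phi(rk,\alpha)$, feed the resulting sub-semigroups of $\Gamma^{\Phi,\ge t}$ into Lemma~\ref{fujita}, and read off convergence of the survival functions via Theorem~\ref{integralvolume}. Your bookkeeping of the regrading $r\leftrightarrow rk$ and the attendant factor $k^d$ is in fact more transparent than the paper's own treatment, and your observation that $\theta^{(k)}\le\theta$ (from the second inequality) suffices to dispose of the range $t\ge\theta$ renders the paper's third comparison (monotonicity of $\Phi_d\le\Phi_k$ for $d\mid k$, and the passage to a factorial subsequence) unnecessary.
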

\begin{proof}
\noindent Denote $\Phi$ and $\Phi_k$ be the superadditive functions associated to the norms $\norm_\bullet$ and $\norm^{(k)}_\bullet$. We first notice the following properties of $\Phi$ and the $\Phi_k$:
\begin{itemize}
\item[(i)] $\Phi_k(k,\alpha)=\Phi(k,\alpha)$, for all $(k,\alpha)\in \Gamma_k$;
\item[(ii)] $\Phi_k(kn,\alpha)\leq \Phi(kn,\alpha)$, for all $(kn,\alpha)\in \Gamma_{k\bullet}$;
\item[(iii)] if $d|k$, then $\Phi_d(kn,\alpha)\leq\Phi_{k}(kn,\alpha)$, for all $(kn,\alpha)\in \Gamma_{k\bullet}$.
\end{itemize}

\noindent Let $\theta_k$ and $\theta$ be the above bounds on the supports of the appropriate measures. We then show that
$$\mu(\norm^{(k)}_\bullet)([t,\theta])\to \mu(\norm_\bullet)([t,\theta]),$$
for all $t\in [-\infty,\theta]$.

\noindent Now, since
$$\mu(\norm^{(k)}_\bullet)([t,\theta])=\frac{\vol\left(\Delta(\Gamma_{k\bullet}^{\Phi_k,\geq t})\right)}{\vol(\Delta(\Gamma_{k\bullet}))},$$
and
$$\vol(\Delta(\Gamma_{k\bullet}))\mi=\vol(\Delta(\Gamma_{\bullet}))\mi,$$
the problem reduces to showing that the sequence of functions $(v_k)_k$, defined as
$$v_k:t\mapsto \vol\left(\Delta(\Gamma_{k\bullet}^{\Phi_k,\geq t})\right)$$
converges pointwise to
$$v:t\mapsto \vol(\Delta(\Gamma^{\Phi,\geq t}_\bullet)).$$
Note that (ii), (iii), and the expressions
$$\theta=\lim_{n\to\infty}\sup_{\Gamma_n}\frac{\Phi(n,\alpha)}{n}<\infty,$$
and
$$\theta_k=\lim_{n\to\infty}\sup_{(kn,\alpha)\in\Gamma_{kn}}\frac{\Phi_k(kn,\alpha)}{n}<\infty$$
imply that $(\theta_{k})_k$ is an increasing sequence converging to $\theta$. 

\noindent Finally, the semigroups $\Gamma_{(k)\bullet}^{\Phi_{(k!)},\geq t}$ and $\Gamma^{\Phi,\geq t}_\bullet$, satisfy the hypotheses of Lemma \ref{fujita} (note (i)), which yields
$$v_k(t)\to v(t),$$
concluding the proof.
\end{proof}

\noindent This implies the main Theorem of this section, Theorem C.
\begin{theorem}\label{thecorollarywemustuse}Let $L$ be such that $R(X,L)$ is generated in degree one. Let $\norm_\bullet$, $\norm'_\bullet$ be two bounded graded norms on $L$, and for each $k\in\nn^*$, let $\norm_\bullet^{(k)}$ and $\norm'_\bullet{}^{(k)}$ denote the graded norms generated in degree one by $\norm_k$ and $\norm'_k$ respectively. Then, we have that:
$$\vol(\norm_\bullet^{(k)},\norm'_\bullet{}^{(k)})\to_{k\to\infty}\vol(\norm_\bullet,\norm'_\bullet).$$
\end{theorem}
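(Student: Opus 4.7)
The strategy is to reduce Theorem C to Proposition \ref{mainthmsectionfour} by expressing the relative volume as a difference of first moments of the single-norm limit measures $\mu(\cdot)$ supplied by Theorem \ref{integralvolume}. The first step is to establish the identity, valid for any two bounded graded norms $\nu_\bullet, \nu'_\bullet$ on a common section algebra:
$$\vol(\nu_\bullet, \nu'_\bullet) \;=\; \int_\rr t\, d\mu(\nu_\bullet) \;-\; \int_\rr t\, d\mu(\nu'_\bullet).$$
This is the two-variable counterpart of Theorem \ref{integralvolume}: the Chen-Maclean equidistribution result \cite[T5.2]{cmac} (the same result underlying Theorem \ref{existslimitmeasure}) identifies the asymptotic spectral measure $\sigma(\nu_\bullet,\nu'_\bullet)$ with the pushforward of normalized Lebesgue on the Okounkov body $\Delta(\Gamma(L))$ by $G_{\Phi_\nu} - G_{\Phi_{\nu'}}$, where $\Phi_\nu(n,\alpha) = -\log\nu_{n,\alpha}(s_{n,\alpha})$ and $\Phi_{\nu'}$ analogously. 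Integrating $t$ against this pushforward yields the displayed identity. I would apply it both to $(\norm_\bullet,\norm'_\bullet)$ on $R(X,L)$ and, mutatis mutandis on the Okounkov body of $kL$, to $(\norm_\bullet^{(k)}, \norm'_\bullet{}^{(k)})$ on $R(X,kL)$.

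With this identity in hand, the theorem reduces to the convergence of first moments
$$\int_\rr t\, d\mu(\norm_\bullet^{(k)}) \;\to\; \int_\rr t\, d\mu(\norm_\bullet)$$
as $k \to \infty$, together with its primed analogue. Proposition \ref{mainthmsectionfour} applied separately to $\norm_\bullet$ and $\norm'_\bullet$ provides the weak convergences $\mu(\norm_\bullet^{(k)}) \rightharpoonup \mu(\norm_\bullet)$ and $\mu(\norm'_\bullet{}^{(k)}) \rightharpoonup \mu(\norm'_\bullet)$. Since the test function $t\mapsto t$ is unbounded on $\rr$, weak convergence alone does not yield convergence of first moments; I would therefore establish uniform-in-$k$ compact support of $\mu(\norm_\bullet^{(k)})$ and $\mu(\norm'_\bullet{}^{(k)})$. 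Property (ii) from the proof of Proposition \ref{mainthmsectionfour} gives $k\mi\Phi_k(kn,\alpha) \leq k\mi\Phi(kn,\alpha)$, and the boundedness of $\norm_\bullet$ combined with the linear-growth property of $\Gamma(L)$ renders the right-hand side uniformly bounded above in $n,\alpha$ and $k$. A symmetric lower bound follows from comparing $\norm_\bullet^{(k)}$ with a fixed auxiliary model graded norm, again using boundedness. With the supports of all $\mu(\norm_\bullet^{(k)})$ and $\mu(\norm'_\bullet{}^{(k)})$ contained in one common compact interval, weak convergence upgrades to convergence of first moments, and the chain
$$\vol(\norm_\bullet^{(k)},\norm'_\bullet{}^{(k)}) = \int_\rr t\,d\mu(\norm_\bullet^{(k)}) - \int_\rr t\,d\mu(\norm'_\bullet{}^{(k)}) \;\to\; \vol(\norm_\bullet,\norm'_\bullet)$$
concludes the argument.

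The main obstacle is the first step, namely the two-variable equidistribution identity. The subtlety is that the Okounkov basis $(s_{n,\alpha})$ does not jointly diagonalize $\nu_n$ and $\nu'_n$ for fixed $n$, so Theorem \ref{volumeslog} does not directly produce a clean sum-of-$\Phi$ formula; the off-diagonal contributions must be controlled and shown to vanish asymptotically. This is exactly the content of the Chen-Maclean machinery, which was already invoked in Theorem \ref{existslimitmeasure} to even define $\sigma(\nu_\bullet,\nu'_\bullet)$; extracting the first-moment identity is then a direct integration of the pushforward description against $t$.
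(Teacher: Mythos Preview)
Your proposal is correct and follows essentially the same route as the paper: both reduce to Proposition \ref{mainthmsectionfour} via the identity $\vol(\nu_\bullet,\nu'_\bullet)=\int t\,d\mu(\nu_\bullet)-\int t\,d\mu(\nu'_\bullet)$, the paper deriving it at finite level (citing \cite[(29)]{cmac} for $m^{-1}\sum_\alpha[\Phi(m,\alpha)-\Phi'(m,\alpha)]=m^{-1}\vol(\nu_m,\nu'_m)$ and then letting $m\to\infty$) rather than via the pushforward description of $\sigma$ as you do. Your explicit verification of uniform-in-$k$ compact support, needed to pass from weak convergence to convergence of first moments, is a point the paper leaves implicit.
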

\begin{proof}
Let $\Phi'$ and for all $k$, $\Phi'_k$ be the superadditive functions associated to the norms $\norm'_\bullet$ and $\norm'{}^{(k)}_\bullet)$ respectively.

\bigskip

\noindent Recall the identity
$$\vol(\norm_\bullet,\norm'_\bullet)=\lim_{m\to\infty}m\mi\vol(\norm_\bullet,\norm'_\bullet).$$
Note that
$$\int_\rr \lambda\,d\mu(\norm_m) - \int_\rr \lambda\,d\mu(\norm'_m)=m\mi\sum_{\alpha\in\Gamma_m(L)}\left[\Phi(m,\alpha)-\Phi'(m,\alpha)\right],$$
where $\mu(\norm_m)$ and $\mu(\norm'_m)$ are defined as the finitely supported measures as in Theorem \ref{integralvolume}. By \cite[(29)]{cmac}, the quantity on the right is identified with
$$m\mi\vol(\norm_m,\norm'_m),$$
so that at the limit,
$$\int_\rr \lambda\,d\mu(\norm_\bullet) - \int_\rr \lambda\,d\mu(\norm'_\bullet)=\vol(\norm_\bullet,\norm'_\bullet).$$
Doing the same process with $\norm^{(k)}_\bullet$ and $\norm'{}^{(k)}_\bullet$, we then find that
$$\int_\rr \lambda\,d\mu(\norm^{(k)}_\bullet) - \int_\rr \lambda\,d\mu(\norm'{}^{(k)}_\bullet)=\vol(\norm_\bullet^{(k)},\norm_\bullet'{}^{(k)}).$$
An application of Theorem \ref{mainthmsectionfour} then yields the desired convergence.
\end{proof}

\begin{remark}\label{reduction}
As volumes are invariant under ground field extension, and in view of the properties of the limit measure under pushforward, the results remain true whenever $L$ is a semiample $\qq$-line bundle, and $\mathbb{K}$ is any non-Archimedean field.
\end{remark}

\section{The asymptotic Fubini-Study operator.}

We at first assume $\field$ to be any non-trivially valued, non-Archimedean field.

\subsection{Volumes and energies.}

The goal of this section is to prove the following Theorem, a generalization of \cite[T4.13]{ber}, where we consider a general complete non-Archimedean field, rather than one which is trivially valued.

\begin{theorem}[Theorem B]\label{theorema}
Let $\norm_\bullet$, $\norm'_\bullet\in\mathcal{N}_\bullet(L)$. We then have:
$$\lim_m E(\FS_m(\norm_m),\FS_m(\norm'_m))=\vol(\norm_\bullet,\norm'_\bullet).$$
\end{theorem}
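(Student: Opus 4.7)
The overall strategy is to reduce the identity, through a cascade of approximations, to the case of graded norms coming from models of $(X,L)$, where the equality is essentially contained in the intersection-theoretic results of \cite{boueri}. Two preliminary reductions set the stage. First, by Remark \ref{hahnfield} we embed $\field$ into a densely valued, algebraically closed, complete non-Archimedean extension $\mathbb{L}$: by Lemma \ref{extvolasymptotic} the asymptotic volume is preserved; by Proposition \ref{extma} so are mixed Monge-Amp\`ere measures; and Fubini-Study operators commute with ground field extension, so the left-hand side is preserved too. Second, choose $r$ such that $R(X,rL)$ is generated in degree one; the pushforward identity $f_*\sigma(\norm_\bullet,\norm'_\bullet)=\sigma(\norm_{r\bullet},\norm'_{r\bullet})$ for $f(\lambda)=r\lambda$, together with the homogeneity of $E$ under $L\mapsto rL$, ensures that both sides transform consistently. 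We may therefore assume $\field$ densely valued and $R(X,L)$ generated in degree one.

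The heart of the proof is then a two-stage approximation. In the outer stage, for each $k\geq 1$ let $\norm_\bullet^{(k)}$ and $\norm'_\bullet{}^{(k)}$ denote the graded norms generated in degree one by $\norm_k$ and $\norm'_k$; Theorem C yields $\vol(\norm_\bullet^{(k)},\norm'_\bullet{}^{(k)})\to \vol(\norm_\bullet,\norm'_\bullet)$. In the inner stage, applied to these norms generated in degree one, Proposition \ref{uniformmodelapproximationprop} produces, for every $\varepsilon>0$, models $(\mathcal{X}^\varepsilon,\mathcal{L}^\varepsilon)$ and $(\mathcal{Y}^\varepsilon,\mathcal{M}^\varepsilon)$ of $(X,L)$ with $m^{-1}d_\infty(\norm_m^{(k)},\norm_{H^0(m\mathcal{L}^\varepsilon)})<\varepsilon$ for large $m$, and Theorem \ref{uniformmodelapproximation} gives convergence of the associated model volumes. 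On the energy side, the Lipschitz bound $\sup_X|\FS_m(\norm)-\FS_m(\norm')|\leq m^{-1}d_\infty(\norm,\norm')$ yields uniform $\varepsilon$-control of the Fubini-Study potentials, which combined with continuity of envelopes (ensuring the relevant limits land in $\PSH^\uparrow$) and the standard Lipschitz continuity of the Monge-Amp\`ere energy in sup-norm, transfers the convergence to the $E$-side. The base of the cascade is the model case: for two model graded norms $\norm_{H^0(\bullet\mathcal{L})}$ and $\norm_{H^0(\bullet\mathcal{M})}$ the identity $E(\phi_\mathcal{L},\phi_\mathcal{M})=\vol(\norm_{H^0(\bullet\mathcal{L})},\norm_{H^0(\bullet\mathcal{M})})$ is, up to the normalization chosen in Remarks \ref{remarkenergies} and \ref{remarkvolumes}, the statement that both sides compute the same top mixed self-intersection on a common model dominating $\mathcal{L}$ and $\mathcal{M}$, as established in \cite{boueri}.

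\textbf{The step I expect to be most delicate} is the outer approximation by norms generated in degree one. Although Theorem C controls the volume side cleanly, the $d_\infty$-distance between $\norm_m^{(k)}$ and $\norm_m$ is only $O(m)$, so the Lipschitz bound on $\FS_m$ gives merely $O(1)$ sup-norm control on the resulting Fubini-Study potentials and not $o(1)$. To extract the desired convergence of $\lim_m E(\FS_m(\norm_m^{(k)}),\FS_m(\norm'_m{}^{(k)}))$ towards $\lim_m E(\FS_m(\norm_m),\FS_m(\norm'_m))$, one must therefore exploit additional structure: the monotonicity of $k\mapsto \FS(\norm_\bullet^{(k)})$ inherited from the quotient nature of generation in degree one, combined with continuity of $E$ along monotone $\PSH^\uparrow$-nets; or, alternatively, a parallel Okounkov-body argument on the energy side mirroring the proof of Theorem C.
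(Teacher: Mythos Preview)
Your proposal is correct and follows essentially the same route as the paper: the same two reductions (ground field extension, passage to $rL$ with $R(X,rL)$ generated in degree one), the same inner approximation of degree-one norms by model norms via Proposition \ref{uniformmodelapproximationprop} and Theorem \ref{uniformmodelapproximation} with the model base case handled by \cite{boueri}, and the same outer approximation via the $\norm_\bullet^{(k)}$ with Theorem C on the volume side. The resolution you single out for the delicate outer step---monotonicity of $k\mapsto \FS(\norm_\bullet^{(k)})$ together with continuity of $E$ along monotone nets---is precisely what the paper uses (the sequence is in fact \emph{increasing} to $\FS(\norm_\bullet)$, since $\FS(\norm_\bullet^{(k)})=\FS_k(\norm_k)$ for a norm generated in degree one; the paper's word ``decreasing'' is a slip).
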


\noindent As a first reduction, we can assume $L$ to be globally generated and the algebra of sections to be generated in degree one (thanks to Remark \ref{reduction}).

\bigskip

\noindent We now show that we can reduce to the case where $\mathbb{K}$ is algebraically closed and non-trivially valued (hence densely valued).

\begin{lemma}Assume $\field$ to be any non-trivially valued, non-Archimedean field, and that Theorem B holds for the base change of $X$ to an algebraically closed extension $\mathbb{L}$ of $\field$. Then, Theorem A holds for $X/\field$.
\end{lemma}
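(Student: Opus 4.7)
The plan is to descend the identity in Theorem B from $\mathbb{L}$ to $\field$ by showing that both sides are invariant under the ground field extension $\mathbb{L}/\field$. Such an extension always exists by Remark \ref{hahnfield} (take $\mathbb{L}=\hat{\field}$, which is densely valued, Cauchy complete and algebraically closed). The right-hand side is dealt with immediately: by Lemma \ref{extvolasymptotic}, the bounded graded norms $\norm_{\mathbb{L},\bullet}$ and $\norm'_{\mathbb{L},\bullet}$ on $R(X_{\mathbb{L}},L_{\mathbb{L}})$ satisfy $\vol(\norm_{\mathbb{L},\bullet},\norm'_{\mathbb{L},\bullet})=\vol(\norm_\bullet,\norm'_\bullet)$.

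The bulk of the work is to show that, for every fixed $m$ such that $mL$ is globally generated, one has $E(\FS_m(\norm_m),\FS_m(\norm'_m))=E(\FS_m(\norm_{m,\mathbb{L}}),\FS_m(\norm'_{m,\mathbb{L}}))$. I would split this into two claims. First, Fubini-Study commutes with base change: writing $\pi_1\colon X\an_{\mathbb{L}}\to X\an$ for the canonical projection from the cartesian diagram of Proposition \ref{extma}, I expect $\FS_m(\norm_{m,\mathbb{L}})=\pi_1^*\FS_m(\norm_m)$. This should be a direct computation from the definition of the ground field extension of a norm, reducing to the diagonalizable case through the density of $\mathcal{N}^{\diag}$ in $\mathcal{N}$ and Proposition \ref{extvol}, which says that a diagonalizing basis $(e_i)$ for $\norm_m$ also diagonalizes $\norm_{m,\mathbb{L}}$. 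In the diagonalizable case, $\FS_m(\norm_m)=m\mi\max_i\log(|s_i|/\norm_m(s_i))$ is patently pulled back under $\pi_1$ to the analogous expression for $\norm_{m,\mathbb{L}}$.

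Second, the relative Monge-Ampère energy is invariant under pullback along $\pi_1$: $E(\pi_1^*\phi,\pi_1^*\phi')=E(\phi,\phi')$ for any continuous psh metrics $\phi$, $\phi'\in\PSH(L)$. This follows from the definition of $E$ as a weighted sum of integrals of $\phi-\phi'$ against mixed Monge-Ampère measures, combined with the projection formula and Proposition \ref{extma}, which asserts that mixed Monge-Ampère measures on $X\an_{\mathbb{L}}$ push forward along $\pi_1$ to the corresponding measures on $X\an$; each term $\int_{X\an_{\mathbb{L}}}\pi_1^*(\phi-\phi')\,(dd^c\pi_1^*\phi)^i\wedge(dd^c\pi_1^*\phi')^{d-i}$ is then equal to $\int_X(\phi-\phi')\,(dd^c\phi)^i\wedge(dd^c\phi')^{d-i}$.

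Putting these together, the sequence $(E(\FS_m(\norm_m),\FS_m(\norm'_m)))_m$ coincides termwise with its counterpart over $\mathbb{L}$. Taking limits and applying the assumption that Theorem B holds for $X_{\mathbb{L}}/\mathbb{L}$, together with Lemma \ref{extvolasymptotic} on the right, gives the desired conclusion for $X/\field$. I expect the only non-routine step to be verifying that $\FS_m$ commutes with ground field extension: the supremum defining $\FS_m(\norm_{m,\mathbb{L}})$ ranges a priori over strictly more sections than the sup for $\FS_m(\norm_m)$, and the required equality must be reduced to the $\field$-sections either by diagonalizability (in the densely valued algebraically closed setting $\mathbb{L}$) or by a density argument, exploiting that the ground field extension is precisely constructed to control sums $\sum a_i's_i$ in terms of the underlying $\field$-basis.
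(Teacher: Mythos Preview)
Your proposal is correct and follows essentially the same route as the paper: both reduce the volume side via Lemma~\ref{extvolasymptotic} (Remark~\ref{reduction}), and handle the energy side by arguing that $\FS_m(\norm_{m,\mathbb{L}})=\pi_1^*\FS_m(\norm_m)$ (via Proposition~\ref{extvol}) and that mixed Monge--Amp\`ere measures push forward along $\pi_1$ (Proposition~\ref{extma}). Your treatment is in fact slightly more explicit than the paper's, spelling out the reduction to the diagonalizable case and the projection-formula computation for the energy; the paper simply cites Propositions~\ref{extvol} and~\ref{extma} and asserts the conclusion. Note also that the statement's ``Theorem A'' is a typo for ``Theorem B'', which you correctly read through.
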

\begin{proof}
By Remark \ref{reduction}, the right-hand side is indeed invariant under ground field extension, so that we only have to take care of the energy side of the equation. Consider the base change $X_{\mathbb{L}}$ and its pullback line bundle $L_{\mathbb{L}}$. Note that the ground field extension $R(X,L)_{\mathbb{L}}$ of the algebra of sections of $L$ coincides with $R(X_{\mathbb{L}},L_{\mathbb{L}})$. Consider the associated norms $\norm_{\bullet,\mathbb{L}}$ and $\norm'_{\bullet,\mathbb{L}}$.
\begin{itemize}
\item by Proposition \ref{extvol}, the Fubini-Study operators associated to each individual norm coincide with those associated to their ground field extension, and that (say)
$$\FS_m(\norm_{m,\mathbb{L}})={\pi_1}^*\FS_m(\norm_m);$$
\item by Proposition \ref{extma}, $${\pi_1}_*\MA(\FS_m(\norm_{m,\mathbb{L}}),\FS_m(\norm'_{m,\mathbb{L}}))=\MA(\FS_m(\norm_m),\FS'_m(\norm_m)),$$ where $\MA(\phi,\phi')$ denotes any mixed Monge-Ampère measure involving only $\phi$ and $\phi'$.
\end{itemize}
It follows that both quantities in the assertion of Theorem B are invariant under ground field extension. Using that the Theorem then holds over $X_{\mathbb{L}}$, this finishes the proof.
\end{proof}

\noindent From now on, assume $\field$ to be algebraically closed and non-trivially valued. We first consider the basic case.

\begin{lemma}Theorem B is true when $\norm_\bullet$ and $\norm'_\bullet$ are both graded norms generated in degree one.
\end{lemma}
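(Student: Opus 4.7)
The plan is to approximate $\norm_\bullet$ and $\norm'_\bullet$ by graded norms coming from models, establish Theorem B for such model graded norms using results of \cite{boueri}, and then transfer the conclusion back to the generated-in-degree-one case by Lipschitz continuity of both sides. Since $\field$ is algebraically closed and non-trivially valued (hence densely valued), Proposition \ref{uniformmodelapproximationprop} applies: for every $\varepsilon>0$, one can find models $(\mathcal{X}^\varepsilon,\mathcal{L}^\varepsilon)$ and $(\mathcal{Y}^\varepsilon,\mathcal{M}^\varepsilon)$ of $(X,L)$ whose associated model graded norms $\norm^\varepsilon_\bullet := \norm_{H^0(\bullet\mathcal{L}^\varepsilon)}$ and $\norm'{}^\varepsilon_\bullet := \norm_{H^0(\bullet\mathcal{M}^\varepsilon)}$ satisfy $d_\infty(\norm_m,\norm^\varepsilon_m) < m\varepsilon$ and $d_\infty(\norm'_m,\norm'{}^\varepsilon_m) < m\varepsilon$ for all large enough $m$. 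Moreover, Theorem \ref{uniformmodelapproximation} already ensures $\vol(\norm^\varepsilon_\bullet,\norm'{}^\varepsilon_\bullet) \to \vol(\norm_\bullet,\norm'_\bullet)$ as $\varepsilon\to 0$.

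For the model graded norms themselves, I would invoke Example \ref{computingmodelmetric}: for $m$ sufficiently divisible, the Fubini-Study potential $\FS_m(\norm^\varepsilon_m)$ coincides with the model metric $\phi_{\mathcal{L}^\varepsilon}$ (and similarly for $\mathcal{M}^\varepsilon$), so in particular $\FS_m(\norm^\varepsilon_m) \to \phi_{\mathcal{L}^\varepsilon}$ uniformly. Combining the intersection-theoretic computation of the asymptotic volume of two model graded norms (via Theorem \ref{volumeslog} applied to determinants of lattices $H^0(m\mathcal{L}^\varepsilon)$, $H^0(m\mathcal{M}^\varepsilon)$) with the corresponding Monge-Amp\`ere-theoretic formula for the relative energy between two semiample model metrics from \cite{boueri} should yield the model case:
\[
\lim_m E\bigl(\FS_m(\norm^\varepsilon_m), \FS_m(\norm'{}^\varepsilon_m)\bigr) = E\bigl(\phi_{\mathcal{L}^\varepsilon}, \phi_{\mathcal{M}^\varepsilon}\bigr) = \vol\bigl(\norm^\varepsilon_\bullet, \norm'{}^\varepsilon_\bullet\bigr).
\]

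Finally, the Lipschitz bound $\sup_X|\FS_m(\norm_m) - \FS_m(\norm^\varepsilon_m)| \leq m\mi d_\infty(\norm_m,\norm^\varepsilon_m) < \varepsilon$ (and its analogue for $\norm'$) together with the fact that the Monge-Amp\`ere energy is $1$-Lipschitz with respect to the uniform distance on bounded continuous psh potentials (this is immediate from the definition of $E$ as an average of integrals against mixed Monge-Amp\`ere probability measures) gives
\[
\bigl|E(\FS_m(\norm_m), \FS_m(\norm'_m)) - E(\FS_m(\norm^\varepsilon_m), \FS_m(\norm'{}^\varepsilon_m))\bigr| \leq 2\varepsilon,
\]
uniformly in $m$. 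Letting $m \to \infty$ first and then $\varepsilon \to 0$, and invoking Theorem \ref{uniformmodelapproximation} on the right-hand side, yields the desired identity. The main obstacle is the model case itself: one must verify cleanly that $m\mi\FS(\norm_{H^0(m\mathcal{L}^\varepsilon)})$ stabilizes at the model metric $\phi_{\mathcal{L}^\varepsilon}$, and above all that for a pair of semiample model metrics the relative energy equals the asymptotic relative volume of the associated model graded norms, the latter being a careful bookkeeping exercise with the intersection-theoretic formulas of \cite{boueri}.
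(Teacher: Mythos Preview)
Your proposal is correct and follows essentially the same route as the paper: approximate by model graded norms via Proposition~\ref{uniformmodelapproximationprop} and Theorem~\ref{uniformmodelapproximation}, establish the identity for model graded norms, and pass to the limit using the Lipschitz property of $\FS_m$ together with continuity of $E$ under uniform convergence. The paper isolates the model case as a separate Lemma~\ref{basecase}, which it dispatches by directly citing \cite[L9.17, T9.15]{boueri} (showing $\vol(\norm_{H^0(\bullet\mathcal{L})},\norm_{H^0(\bullet\mathcal{M})})=\vol(\bigN_\bullet(\phi_\mathcal{L}),\bigN_\bullet(\phi_\mathcal{M}))=E(\phi_\mathcal{L},\phi_\mathcal{M})$); this is precisely the ``main obstacle'' you flag, and no further bookkeeping is needed beyond those two citations.
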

\begin{proof}
Pick approximations $\norm_{H^0(\bullet\mathcal{L}^\varepsilon)}$ and $\norm_{H^0(\bullet\mathcal{M}^\varepsilon)}$ as in Theorem \ref{uniformmodelapproximation}. By Lemma \ref{basecase} below, we have, for all $\varepsilon>0$,
$$E(\phi_{\mathcal{L}^\varepsilon},\phi_{\mathcal{M}^\varepsilon})=\vol(\norm_{H^0(\bullet\mathcal{L}^\varepsilon)},\norm_{H^0(\bullet\mathcal{M}^\varepsilon)}).$$
Now, the statement of Theorem \ref{uniformmodelapproximation} is that
$$\vol(\norm_{H^0(\bullet\mathcal{L}^\varepsilon)},\norm_{H^0(\bullet\mathcal{M}^\varepsilon)})\to_{\varepsilon\to0}\vol(\norm_\bullet,\norm'_\bullet).$$
In particular, by construction, we have that
$$\lim_m \FS_m(\norm_{H^0(m\mathcal{L}^\varepsilon)})=\FS_1(\norm_{H^0(\mathcal{L}^\varepsilon)})=\phi_{\mathcal{L}^\varepsilon},$$
so that the Lemma is proven once we show that
$$\lim_m E(\FS_m(\norm_{H^0(m\mathcal{L}^\varepsilon)}),\FS_m(\norm_{H^0(m\mathcal{M}^\varepsilon)}))\to_{\varepsilon\to0}\lim_m E(\FS_m(\norm_m),\FS_m(\norm'_m)).$$
But using the $1$-Lipschitz property of the operator $\FS_m$ with respect to the $\sup$ norm of metrics and the $d_\infty$-distance, we find that for all $m$, for all $\varepsilon>0$,
$$\sup_X|\FS_m(\norm_{H^0(m\mathcal{L}^\varepsilon)})- \FS_m(\norm_m)|\leq \varepsilon,$$
so that finally, 
$$\lim_m\FS_m(\norm_{H^0(m\mathcal{L}^\varepsilon)})\to_{\varepsilon\to 0} \lim_m\FS_m(\norm_m)|,$$
uniformly. Proceeding similarly for $\mathcal{M}$, and then using continuity of the Monge-Ampère energy along uniform limits, we find the desired result.
\end{proof}

\noindent We then have the following lemma, as promised.
\begin{lemma}\label{basecase}
Assume $(\mathcal{X},\mathcal{L})$, $(\mathcal{Y},\mathcal{M})$ to be semiample models of $L$ defined on the same model $\mathcal{X}$ of $X$.  Denoting $\phi_\mathcal{L}$ and $\phi_\mathcal{M}$ their associated model metrics, we then have that
$$E(\phi_{\mathcal{L}},\phi_{\mathcal{M}})=\vol(\norm_{H^0(\bullet\mathcal{L})},\norm_{H^0(\bullet\mathcal{M})}).$$
\end{lemma}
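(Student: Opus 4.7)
The plan is to identify both sides of the claimed identity with a common intersection-theoretic expression on the model $\mathcal{X}$; this is essentially the content of results of Boucksom--Eriksson \cite{boueri}, and the proof amounts to invoking these results while carefully reconciling the normalization conventions pointed out in Remark \ref{remarkenergies} and Remark \ref{remarkvolumes}. After dominating if necessary by an snc or sufficiently regular model (which changes neither the model metrics nor the associated lattice norms on $H^0(mL)$), we may assume $\mathcal{X}$ is suitable for intersection theory on its special fiber.

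For the energy side, the Chambert-Loir--Ducros theory of non-Archimedean mixed Monge-Ampère measures (\cite{cld}, see also the exposition in \cite{boueri}) identifies the measures $(dd^c\phi_\mathcal{L})^i \wedge (dd^c\phi_\mathcal{M})^{d-i}$ as being supported on the dual complex of the special fiber of $\mathcal{X}$, and gives
$$\int_{X\an} (\phi_\mathcal{L} - \phi_\mathcal{M})\,(dd^c\phi_\mathcal{L})^i \wedge (dd^c\phi_\mathcal{M})^{d-i} = (\mathcal{L} - \mathcal{M}) \cdot \mathcal{L}^i \cdot \mathcal{M}^{d-i}$$
as intersection numbers on $\mathcal{X}$. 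Summing over $i \in \{0, \dots, d\}$ and dividing by $(d+1)\vol(L)$ yields a closed-form expression for $E(\phi_\mathcal{L}, \phi_\mathcal{M})$ as an intersection number on the model.

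For the volume side, Theorem \ref{volumeslog} expresses each $\vol(\norm_{H^0(m\mathcal{L})}, \norm_{H^0(m\mathcal{M})})$ as a difference of logs of top exterior power norms. Since these are lattice norms, their top exterior powers are themselves lattice norms and correspond to the lattices $\det H^0(m\mathcal{L})$ and $\det H^0(m\mathcal{M})$ inside $\det H^0(mL)$; the log-ratio thus records the valuation of the transition element between these two lattices. An asymptotic Riemann-Roch computation on $\mathcal{X}$ (carried out in \cite{boueri}) identifies the leading $m^{d+1}$ coefficient of this valuation with precisely the intersection number $\sum_{i=0}^d (\mathcal{L} - \mathcal{M}) \cdot \mathcal{L}^i \cdot \mathcal{M}^{d-i}$. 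Dividing by $m \cdot h^0(mL) \sim \vol(L)\,m^{d+1}/d!$ and passing to the limit in $m$ recovers the same expression as on the energy side, establishing the identity.

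The main obstacle is careful bookkeeping of normalizations: \cite{boueri} divides neither its relative volume nor its MA-energy by $\vol(L)$, so the raw equality available in their conventions scales symmetrically to match ours, but this must be verified directly against the definitions adopted here. A secondary technical point is that in the densely valued regime $\field^\circ$ fails to be Noetherian, so the asymptotic Riemann-Roch has to be set up via Okounkov filtrations rather than Hilbert polynomials; this is handled in \cite{boueri} and applies verbatim here since the models and norms under consideration are precisely those to which their machinery is tailored.
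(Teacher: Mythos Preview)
Your argument is correct, but it takes a different route from the paper's. The paper does not open the intersection-theoretic black box at all: it first replaces the model graded norms $\norm_{H^0(\bullet\mathcal{L})}$, $\norm_{H^0(\bullet\mathcal{M})}$ by the supnorms $\bigN_\bullet(\phi_\mathcal{L})$, $\bigN_\bullet(\phi_\mathcal{M})$ using the $O(1)$ distortion estimate \cite[T6.4]{boueri} (which, by Lipschitz continuity of volumes in $d_\infty$, leaves the asymptotic volume unchanged --- this is \cite[L9.17]{boueri}), and then simply invokes \cite[T9.15]{boueri}, which already packages the identity $E(\phi,\psi)=\vol(\bigN_\bullet(\phi),\bigN_\bullet(\psi))$ for continuous psh metrics. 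Your approach instead unwinds both sides to a common intersection number on $\mathcal{X}$ via the Chambert--Loir--Ducros description of the mixed measures and an asymptotic Riemann--Roch on the model; this is essentially what lies \emph{inside} the proofs of the cited results in \cite{boueri}. Your route is more geometric and explains where the equality actually comes from, while the paper's route is shorter and avoids re-entering the non-Noetherian technicalities of the densely valued case (which you correctly flag and defer to \cite{boueri}).
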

\begin{proof}
We first start by stating the following equality (\cite[L9.17]{boueri}):
$$\vol(\norm_{H^0(\bullet\mathcal{L})},\norm_{H^0(\bullet\mathcal{M})})=\vol(\bigN_\bullet(\phi_\mathcal{L}),\bigN_\bullet(\phi_\mathcal{M})).$$
Recall from Remarks \ref{remarkvolumes} and \ref{remarkenergies} that our conventions for the volume and energy are different from those of \cite{boueri}, but as semiampleness of $L$ implies
$$\lim \frac{h^0(mL)}{m^{\dim X}}=\vol(L),$$
the changes cancel out. Furthermore, their notation
$$\vol(L,\phi,\psi)$$
corresponds to
$$\vol(\bigN_\bullet(\phi),\bigN_\bullet(\psi))$$
in our case.

\bsni The above equality follows from earlier results of \cite{boueri}, wherein it is shown that
$$d_\infty(\norm_{H^0(m\mathcal{L})},\bigN_m(\phi_\mathcal{L}))=O(1),$$
(\cite[T6.4]{boueri}) so that Lipschitz continuity of the volume with respect to the $d_\infty$-distance concludes. Then, the Lemma is proven by applying Theorem 9.15 of \cite{boueri} to $\phi_\mathcal{L}$ and $\phi_{\mathcal{M}}$.
\end{proof}

We now prove the Theorem.
\begin{proof}
Assume now that both norms are not necessarily finitely generated. By surjectivity of $H^0(kL)^{\odot m}\to H^0(kmL)$ for all $k$, $m>0$, we may endow each $H^0(kmL)$ with the quotient norm induced by this morphism using the norms $\norm_k$, $\norm'_k$. We denote these norms $\norm^{(k)}_m$, $\norm'^{(k)}_m$. These define graded norms, generated in degree one, on $R(k)$. Consider their associated Fubini-Study metrics:
$$\FS_k(\norm^{(k)}_\bullet)$$
and
$$\FS_k(\norm'^{(k)}_\bullet).$$ 
Recall that the Theorem holds for those norms. Now, since $( \FS_k(\norm^{(k)}_\bullet) )_k$, resp. $(\FS_k({\norm'}^{(k)}_\bullet) )_k$ are decreasing nets, by continuity of $E$ along decreasing nets follows:
$$\lim_{k\to \infty} E\left(\FS_k(\norm^{(k)}_\bullet), \FS_k(\norm'^{(k)}_\bullet)\right)=E(\lim_k \FS_k(\norm_k),\FS_k(\norm'_k)).$$
The right-hand side limit, that is,
$$\lim_{k\to\infty}\vol(\norm_\bullet^{(k)},{\norm'}_\bullet^{(k)})=\vol(\norm_\bullet,\norm'_\bullet),$$
is the statement of Theorem \ref{thecorollarywemustuse}.
\end{proof}

\subsection{Envelopes.}

Following \cite[7.5]{boueri}, we define two important notions of envelopes for bounded functions.

\begin{defi}Let $\phi$ be a bounded metric on $L$. The \textbf{\textit{psh} envelope} of $\phi$ is defined as
$$P(\phi)=\sup\{\phi'\in \PSH(L),\,\phi'\leq \phi\}.$$
The \textbf{regular psh envelope} of $f$ is defined as
$$Q(\phi)=\sup\{\phi'\in \PSH(X)\cap C^0(X),\,\phi'\leq \phi\}.$$
We similarly define those envelopes for functions, by identifying the space of continuous metrics on a line bundle as an affine space modelled on the space of continuous functions as before.
\end{defi}

\begin{defi}[Continuity of envelopes] We say that the pair $(X,L)$ admits \textbf{continuity of envelopes} if the following property holds true:
\begin{itemize}
\item if $\phi$ is a continuous metric on $L$, then $P(\phi)$ is continuous.
\end{itemize}
Note that we then have $P(\phi)=Q(\phi)$.
\end{defi}

\begin{example}\label{cpe}By \cite{boulb}, a smooth, projective variety $X$ defined over any field $\field$ which satisfies all of the following properties:
\begin{itemize}
\item $\field$ is of equal characteristic $0$;
\item $\field$ is either trivially or discretely valued,
\end{itemize}
admits continuity of envelopes for any line bundle $L$ over $X$.
Furthermore, by \cite{gub}, continuity of envelopes also holds:
\begin{itemize}
\item for any line bundle on a curve, over any field; 
\item for all line bundles on a variety $X$ over $\field$, where $\field$ is a discretely valued field of positive characteristic $p$, \textit{provided we have resolution of singularities} over $\field$ in any dimension.
\end{itemize}
\end{example}

\begin{defi}
The \textbf{asymptotic Fubini-Study operator} is defined on the set of graded norms as the \textit{upper semi-continuous regularization} of the limit of the usual Fubini-Study operators, that is, given a bounded graded norm $\norm_\bullet$, 
$$\FS(\norm_\bullet)=\mathrm{usc}\left(\lim_m \FS_m(\norm_m)\right).$$
It associates a bounded psh function to any graded norm.
\end{defi}

\begin{remark}The asymptotic Fubini-Study operator is well-defined and defines a psh function provided that $(X,L)$ admits continuity of envelopes, due to \cite[L7.30]{boueri}, as, by Fekete's lemma,
$$\lim_m \FS_m(\norm_m)=\sup_m \FS_m(\norm_m).$$
\end{remark}

\subsection{Plurisubharmonic functions regularizable from below.}

In this section, we investigate the image of the asymptotic Fubini-Study operator. Most of the material here is simply the translation of results from \cite{ber} from the trivially valued case to our case.

\bsni We first need to consider an important set of valuations contained in the Berkovich analytification of a variety. 
\begin{defi}
A valuation $\nu:K(X)^*\to\rr$ is said to be \textbf{divisorial} if there exists a normal, projective, birational model $Y\to X$ of $X$, a prime divisor $E\subset Y$, and a positive real number $c$, such that
$$\nu=c\cdot \mathrm{ord}_E.$$
The set $X^{\mathrm{div}}$ of divisorial valuations on $X$ is dense in $X^{\mathrm{an}}$, see e.g. \cite[6.3]{jonmus}.
\end{defi}

\begin{defi}
We say that a plurisubharmonic function $\phi$ is \textbf{regularizable from below}, and we write
$$\phi\in \PSH^\uparrow$$
if and only if $\phi$ is the pointwise limit on $X^{\mathrm{div}}$ of an increasing net of Fubini-Study potentials, equivalently of an increasing net of continuous, psh functions.
\end{defi}

\begin{remark}We then have that $\phi$ is the usc regularized supremum of such a net. Furthermore, by \cite[L4.4]{ber} (which extends to our non-trivially valued case where continuity of envelopes holds), $\phi\in\PSH^\uparrow$ if and only if $\phi=Q^*(\phi)$, where $Q^*(\phi)$ denotes the usc-regularized envelope
$$Q^*(\phi)=\mathrm{usc}\,\sup\{\phi'\in \PSH(X)\cap C^0(X),\,\phi'\leq \phi\}.$$
\end{remark}

\bigskip\noindent We then show that $\PSH^\uparrow$ coincides with the image of the asymptotic Fubini-Study operator. 

\begin{theorem}
A function belongs to $\PSH^\uparrow$ if and only if there exists a bounded graded norm $\norm_\bullet$ such that $\FS(\norm_\bullet)=\phi$.
\end{theorem}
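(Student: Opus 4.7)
\noindent The plan is to establish the two implications separately; the forward direction is essentially formal via Fekete's lemma, while the converse reduces, via the characterization $\phi \in \PSH^\uparrow \iff \phi = Q^*(\phi)$, to a quantization statement for continuous psh metrics. For the forward implication, I would suppose $\phi = \FS(\norm_\bullet)$ for some $\norm_\bullet \in \mathcal{N}_\bullet(L)$. Submultiplicativity of $\norm_\bullet$ implies the superadditivity $(m+n)\FS_{m+n}(\norm_{m+n}) \geq m\FS_m(\norm_m) + n\FS_n(\norm_n)$ by inserting products $s \cdot t$ into the supremum defining $\FS_{m+n}$. Fekete's lemma then yields $\lim_m \FS_m(\norm_m) = \sup_m \FS_m(\norm_m)$ pointwise, so $\phi = \mathrm{usc}(\sup_m \FS_m(\norm_m))$. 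The partial maxima $\psi_M := \max_{m \leq M} \FS_m(\norm_m)$ form an increasing net of continuous psh functions with usc-regularized pointwise supremum equal to $\phi$, placing $\phi$ in $\PSH^\uparrow$.

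For the converse, I would take as candidate $\norm_\bullet := \bigN_\bullet(\phi)$. Submultiplicativity follows from multiplicativity of $|\cdot|_{m\phi}$ on sections, and boundedness from $\phi$ being bounded relative to a reference metric, combined with the Lipschitz estimate on $\bigN_m$ recalled in Section 2. The inequality $\FS_m(\bigN_m(\phi)) \leq \phi$ is direct from $\sup_y |s|_{m\phi}(y) \geq |s|_{m\phi}(x)$, which gives $\FS(\bigN_\bullet(\phi)) \leq \phi$. For the reverse, I would exploit the duality $\FS_m(\zeta) \leq \psi \iff \zeta \geq \bigN_m(\psi)$ and the antitoneity of both operators. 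Treating first a continuous psh $\phi'$: it trivially satisfies $Q^*(\phi') = \phi'$, so by definition of $\PSH^\uparrow$ there is an increasing net of Fubini-Study potentials $f_k = \FS_{m_k}(\zeta_k)$ converging pointwise to $\phi'$ on the dense set $X^{\mathrm{div}}$; by continuity of $\phi'$ and of the $f_k$, we have $f_k \leq \phi'$ everywhere, hence $\zeta_k \geq \bigN_{m_k}(\phi')$, and applying $\FS_{m_k}$ gives $f_k \leq \FS_{m_k}(\bigN_{m_k}(\phi'))$. Taking suprema yields $\phi' \leq \FS(\bigN_\bullet(\phi'))$, hence equality in the continuous psh case.

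To lift this to general $\phi \in \PSH^\uparrow$, I would use $\phi = Q^*(\phi) = \mathrm{usc}\sup\{\phi' \in \PSH \cap C^0 : \phi' \leq \phi\}$, where continuity of envelopes is what guarantees this characterization through the cited extension of \cite[L4.4]{ber}. For any such $\phi' \leq \phi$, antitoneity gives $\bigN_\bullet(\phi') \geq \bigN_\bullet(\phi)$, whence $\FS(\bigN_\bullet(\phi')) \leq \FS(\bigN_\bullet(\phi))$; combined with the continuous case, $\phi' = \FS(\bigN_\bullet(\phi')) \leq \FS(\bigN_\bullet(\phi))$. Taking the usc-regularized supremum over all such $\phi'$ yields $\phi = Q^*(\phi) \leq \FS(\bigN_\bullet(\phi))$, giving equality. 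The main obstacle is the quantization step for continuous psh $\phi'$: passing from the abstract approximation by FS potentials (guaranteed by $\phi' \in \PSH^\uparrow$) to pointwise control by $\FS(\bigN_\bullet(\phi'))$. This hinges on the $\FS_m$–$\bigN_m$ duality and, critically, on the approximants lying below $\phi'$ everywhere — which continuity of $\phi'$ and of the $f_k$, together with the density of $X^{\mathrm{div}} \subset X\an$, supplies.
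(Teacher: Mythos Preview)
Your argument is correct and the forward direction matches the paper's almost exactly (Fekete plus the observation that the partial suprema form an increasing net of continuous psh functions). The converse, however, is organized rather differently. The paper dispatches it in one line by invoking \cite[T7.27]{boueri}, which states that $Q(\phi)=\lim_m \FS_m(\bigN_m(\phi))$ for any bounded metric $\phi$; taking the usc regularization gives $Q^*(\phi)=\FS(\bigN_\bullet(\phi))$, and the hypothesis $\phi\in\PSH^\uparrow$ together with the characterization $\phi=Q^*(\phi)$ finishes immediately. You instead rebuild the needed piece of \cite[T7.27]{boueri} by hand: first treating continuous psh $\phi'$ via the adjunction $\FS_m(\zeta)\leq\psi \iff \zeta\geq\bigN_m(\psi)$ and an increasing net of Fubini--Study approximants, then bootstrapping to general $\phi\in\PSH^\uparrow$ by taking the envelope over continuous psh minorants.

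Both routes rely on the same external inputs---continuity of envelopes and the characterization $\PSH^\uparrow=\{\phi=Q^*(\phi)\}$ from \cite[L4.4]{ber}---so yours is not strictly more elementary, but it does make the mechanism transparent: the duality between $\FS_m$ and $\bigN_m$ is precisely what forces any Fubini--Study minorant of $\phi'$ to lie below $\FS_m(\bigN_m(\phi'))$, which is the heart of the quantization. The paper's route is shorter; yours explains \emph{why} the black box works. One small point worth tightening: when you pass from ``$\phi'\in\PSH^\uparrow$'' to ``there exists an increasing net of \emph{Fubini--Study} potentials'', you are using the equivalence (stated in the definition of $\PSH^\uparrow$) between approximation by continuous psh functions and by genuine Fubini--Study potentials---this is where the assumption that $L$ is ample and the density of $\mathcal{H}(L)$ in $\PSH\cap C^0$ enter, and it deserves an explicit mention.
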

\begin{proof}
Assume $\phi$ is the image of some bounded graded norm $\norm_\bullet$ by the asymptotic Fubini-Study operator, i.e. $\phi=\mathrm{usc}\left(\lim_m \FS_m(\norm_m)\right)$. In particular, $\phi$ is psh, by continuity of envelopes. By the remark above, it is then enough to show that $\phi=\mathrm{usc}\,Q(\phi)$, which is clear by construction.

\bsni We now assume $\phi\in\PSH^\uparrow$. Now, by \cite[T7.27]{boueri}, we have that $Q(\phi)=\lim_m \FS_m(\bigN_m(\phi))$. Then, by definition, $Q^*(\phi)=\FS(\bigN_\bullet(\phi))$. Since $\phi\in\PSH^\uparrow$, we have that $\phi=Q^*(\phi)$, thus $$\phi=\FS(\bigN_\bullet(\phi)),$$ which proves the Theorem. 
\end{proof}

\subsection{The asymptotic Fubini-Study operator descends to a bijection.}

In this section, we prove the following theorem, which is a generalization of \cite[T4.16]{ber}:
\begin{theorem}[Theorem A]\label{theoremb}
Let $(X,L)$ admit continuity of envelopes, with $L$ ample. The asymptotic Fubini-Study operator $\FS$ then defines a bijection
$$\mathcal{N}_{\bullet}(L)/\sim\,\to \mathrm{PSH}^\uparrow(L).$$
\end{theorem}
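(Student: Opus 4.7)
The plan is to identify the supnorm operator $\bigN_\bullet$ as the inverse of $\FS$. The preceding theorem already gives $\FS \circ \bigN_\bullet = \mathrm{id}$ on $\PSH^\uparrow(L)$, hence surjectivity of the map in the statement. What remains is to verify that $\FS$ descends to $\mathcal{N}_\bullet(L)/\sim$ and is injective there.

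The first step is to establish the key identity: for every bounded graded norm $\norm_\bullet$,
$$\bigN_\bullet(\FS(\norm_\bullet)) \leq \norm_\bullet \quad \text{and} \quad \bigN_\bullet(\FS(\norm_\bullet)) \sim \norm_\bullet.$$
The pointwise inequality is immediate from the definitions: since $m \FS(\norm_\bullet) \geq m \FS_m(\norm_m) \geq \log(|s|/\norm_m(s))$ on $X\an$ for every $s \in H^0(mL)$, the supremum defining $\bigN_m(\FS(\norm_\bullet))(s)$ is bounded by $\norm_m(s)$. For the equivalence, Theorem B applied to this pair, combined with $\FS \circ \bigN_\bullet = \mathrm{id}$ on $\PSH^\uparrow$, yields
$$\vol\bigl(\bigN_\bullet(\FS(\norm_\bullet)), \norm_\bullet\bigr) = E\bigl(\FS(\norm_\bullet), \FS(\norm_\bullet)\bigr) = 0.$$
Since the two norms are pointwise comparable, the asymptotic spectral measure is supported in $[0, \infty)$, so $d_1$ equals $\vol$ and also vanishes. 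Injectivity of the induced map on $\mathcal{N}_\bullet(L)/\sim$ is then immediate: if $\FS(\norm_\bullet) = \FS(\norm'_\bullet) = \phi$, both norms are equivalent to the common $\bigN_\bullet(\phi)$, hence to each other.

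For well-definedness, suppose $\norm_\bullet \sim \norm'_\bullet$. I would introduce the pointwise maximum $\mu_\bullet = \norm_\bullet \vee \norm'_\bullet$, which is a bounded submultiplicative graded norm dominating both terms, so $\FS(\mu_\bullet) \leq \FS(\norm_\bullet), \FS(\norm'_\bullet)$ by monotonicity. The lattice identity
$$d_1(\norm_\bullet, \mu_\bullet) + d_1(\mu_\bullet, \norm'_\bullet) = d_1(\norm_\bullet, \norm'_\bullet),$$
which at the finite level reduces to the scalar equality $|\max(a,b)-a| + |\max(a,b)-b| = |a-b|$ applied to the spectral weights in a joint-diagonalizing basis, and transfers to the asymptotic regime via density of diagonalizable norms after base change to an algebraic closure, forces $d_1(\norm_\bullet, \mu_\bullet) = 0$. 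Comparability then gives $\vol(\norm_\bullet, \mu_\bullet) = 0$ as well, and Theorem B yields $E(\FS(\norm_\bullet), \FS(\mu_\bullet)) = 0$ with $\FS(\norm_\bullet) \geq \FS(\mu_\bullet)$. A domination principle for the relative Monge-Ampère energy---each of the $d+1$ integrals in the defining sum of $E$ is a nonnegative integral of $\FS(\norm_\bullet) - \FS(\mu_\bullet) \geq 0$ against a nonnegative Radon measure, so vanishing of the sum forces equality---yields $\FS(\norm_\bullet) = \FS(\mu_\bullet)$, and by symmetry $\FS(\norm'_\bullet) = \FS(\mu_\bullet)$ also.

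The hardest step is the domination principle in the non-Archimedean setting: propagating equality from the supports of the mixed Monge-Ampère measures to all of $X\an$ requires combining the Chambert-Loir-Ducros theory with the continuity of envelopes hypothesis to ensure that functions in $\PSH^\uparrow$ are sufficiently regular for a uniqueness argument to apply. The lattice identity is a secondary but nontrivial check whose transfer to the asymptotic regime relies on careful use of density of diagonalizable norms and invariance of volumes under ground field extension; everything else amounts to bookkeeping around Theorem B and the preceding theorem.
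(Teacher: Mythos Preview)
Your proposal is correct and follows essentially the same route as the paper. Both arguments hinge on the same three ingredients: the comparison $\bigN_\bullet(\FS(\norm_\bullet))\leq\norm_\bullet$ with equivalence (the paper's Lemma \ref{comparisonoperators}), the decomposition of $d_1(\norm_\bullet,\norm'_\bullet)$ via the pointwise maximum $\norm_\bullet\vee\norm'_\bullet$ (your ``lattice identity'' is exactly the identity the paper opens its proof with), and a domination principle saying that $E(\phi,\psi)=0$ with $\phi\geq\psi$ forces $\phi=\psi$. The paper packages this last step as Lemma \ref{minitheoremb}, which cites \cite[C6.28]{boulb} directly rather than attempting the support-propagation argument you sketch; that external result is precisely the ``hardest step'' you identify, and its proof does require continuity of envelopes. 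One minor organizational difference: you treat well-definedness and injectivity separately, whereas the paper derives the full biconditional $d_1(\norm_\bullet,\norm'_\bullet)=0\Leftrightarrow\FS(\norm_\bullet)=\FS(\norm'_\bullet)$ in one stroke by rewriting $d_1$ as a sum of two nonnegative energies and showing $\FS(\norm_\bullet\vee\norm'_\bullet)=Q(\FS(\norm_\bullet)\wedge\FS(\norm'_\bullet))$.
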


\noindent The proof follows that of the aforementioned theorem. We start with preparatory lemmas:

\begin{lemma}\label{minitheoremb}Assume that $\norm_\bullet\geq \norm'_\bullet$ pointwise. Then, 
$$d_1(\norm_\bullet,\norm'_\bullet)=0\Leftrightarrow \FS(\norm_\bullet)=\FS(\norm'_\bullet).$$
\end{lemma}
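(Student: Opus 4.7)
The plan is to transfer the equivalence from the norm side to the metric side via Theorem B (Theorem \ref{theorema}), exploiting the fact that the hypothesis $\norm_\bullet \geq \norm'_\bullet$ makes the relative volume unambiguous in sign.

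First I would record the two immediate consequences of the monotonicity hypothesis. Since $\FS_m$ is anti-monotone in the norm (larger norm produces smaller potential), $\FS_m(\norm_m) \leq \FS_m(\norm'_m)$ for every $m$, and passing to the pointwise limit and its usc regularization gives $\FS(\norm_\bullet) \leq \FS(\norm'_\bullet)$. On the other hand, the remark following the definition of the relative volume yields $\vol(\norm_\bullet,\norm'_\bullet) = -d_1(\norm_\bullet,\norm'_\bullet)$ under $\norm_\bullet \geq \norm'_\bullet$. Combining these with Theorem \ref{theorema}, the two conditions to be compared become
$$E(\FS(\norm_\bullet),\FS(\norm'_\bullet)) = 0 \quad\text{and}\quad \FS(\norm_\bullet)=\FS(\norm'_\bullet),$$
under the standing inequality $\FS(\norm_\bullet)\leq \FS(\norm'_\bullet)$.

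The $(\Leftarrow)$ direction is trivial from the cocycle property of $E$: equal arguments give $E=0$, and Theorem \ref{theorema} then returns $\vol(\norm_\bullet,\norm'_\bullet)=0$, hence $d_1(\norm_\bullet,\norm'_\bullet)=0$. For $(\Rightarrow)$, set $\phi=\FS(\norm_\bullet)$ and $\phi'=\FS(\norm'_\bullet)$, so $\phi\leq\phi'$. From the definition
$$E(\phi,\phi') = \frac{1}{(d+1)\vol(L)}\sum_{i=0}^d \int_X(\phi-\phi')\,(dd^c\phi)^i\wedge (dd^c\phi')^{d-i},$$
each summand is the integral of a non-positive function against a positive Radon measure, hence non-positive, and their sum is zero, forcing each to vanish. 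Taking $i=0$ yields $\int_X(\phi-\phi')\,\MA(\phi')=0$, so $\phi=\phi'$ holds $\MA(\phi')$-almost everywhere.

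The final step is to upgrade this almost-everywhere equality to pointwise equality on $X\an$, which is where the main difficulty lies. The clean way to do this is to invoke the domination principle for bounded plurisubharmonic metrics in $\PSH^\uparrow$: if $\phi\leq\phi'$ is an inequality of bounded psh potentials and $\phi\geq\phi'$ holds $\MA(\phi')$-almost everywhere, then $\phi'\leq\phi$ on all of $X\an$, forcing equality. This is the non-Archimedean analogue of the standard domination principle and, in the trivially valued setting, underpins the corresponding step of \cite[T4.16]{ber}; under the continuity-of-envelopes hypothesis on $(X,L)$ it transfers verbatim to our situation via the envelope characterization $\phi=Q^*(\phi)$ for functions in $\PSH^\uparrow$ noted earlier. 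I expect that justifying this last step is the only nontrivial item; once it is in hand, the two implications close up immediately.
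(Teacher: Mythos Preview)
Your argument is correct and follows the same skeleton as the paper: use the monotonicity hypothesis to identify $d_1$ with $\pm\vol$, apply Theorem~\ref{theorema} to obtain $E(\FS(\norm_\bullet),\FS(\norm'_\bullet))=0$, and then upgrade this to pointwise equality of the potentials. The only difference is in that last step: the paper invokes \cite[C6.28]{boulb} as a black box (which, under continuity of envelopes, says that $E(\phi,\phi')=0$ together with $\phi\leq\phi'$ forces $\phi'-\phi$ to be constant, hence zero), whereas you unpack the energy formula to get $\MA(\phi')$-a.e.\ equality and then appeal to the domination principle. These are essentially the same mechanism---the cited corollary is itself a consequence of the domination/comparison principle for bounded psh metrics---so your route is a mild unpacking of what the paper cites, not a genuinely different argument.
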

\begin{proof}
We first notice that, since $\norm_\bullet\leq\norm'_\bullet$ pointwise, the definition of $d_1$ using successive minima implies
$$d_1(\norm_\bullet,\norm'_\bullet)=\vol(\norm_\bullet,\norm'_\bullet),$$
and this volume is equal to $0$ by our hypothesis. Using Theorem \ref{theorema}, we then have that
$$E(\FS(\norm_\bullet),\FS(\norm'_\bullet))=0.$$
But \cite[C6.28]{boulb} implies that $\FS(\norm'_\bullet)-\FS(\norm_\bullet)$ is constant, hence equal to zero since $E(\FS(\norm_\bullet),\FS(\norm'_\bullet))=0$.
\end{proof}

\begin{remark}Note that \cite[C6.28]{boulb} holds modulo a certain hypothesis (see \cite[R6.27]{boulb}), which is true if $(X,L)$ has property $\cpe$.
\end{remark}

\begin{lemma}\label{comparisonoperators}Let $\norm_\bullet$ be an element of $\mathcal{N}_\bullet(L)$. We then have that $\norm_\bullet\geq \bigN_\bullet(\FS(\norm_\bullet))$, and furthermore those norms are equivalent.
\end{lemma}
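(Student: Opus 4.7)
The strategy is to first establish the pointwise inequality $\norm_m \geq \bigN_m(\FS(\norm_\bullet))$ for every $m$ directly from the definitions of $\FS$ and $\bigN$, and then to upgrade this inequality to a $\sim$-equivalence by showing that the relative asymptotic volume vanishes, using Theorem B (Theorem \ref{theorema}) together with the identity $\FS(\bigN_\bullet(\phi)) = \phi$ for $\phi \in \PSH^\uparrow$ which was proved just above.

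For the inequality itself, I would chain two elementary observations. Unfolding the definition of $\FS_m$, for every $s \in H^0(mL)$ and every $x \in X\an$ one has $|s(x)| \cdot e^{-m \FS_m(\norm_m)(x)} \leq \norm_m(s)$; taking supremum over $x$ yields $\bigN_m(\FS_m(\norm_m))(s) \leq \norm_m(s)$. Second, submultiplicativity of $\norm_\bullet$ makes the sequence $m \mapsto m\, \FS_m(\norm_m)(x)$ superadditive, so by Fekete's lemma $\FS_m(\norm_m) \leq \sup_k \FS_k(\norm_k)$ pointwise, and this is dominated by the usc regularization $\FS(\norm_\bullet)$. Since $\bigN_m$ is order-reversing in its metric argument (because $|s|_{m\vp} = |s|\, e^{-m\vp}$ decreases in $\vp$), these two observations combine to give $\norm_m \geq \bigN_m(\FS_m(\norm_m)) \geq \bigN_m(\FS(\norm_\bullet))$, which is the claimed inequality of graded norms.

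For the asymptotic equivalence, the inequality $\norm_m \geq \bigN_m(\FS(\norm_\bullet))$ for every $m$ means that every relative successive minimum $\lambda_i(\norm_m, \bigN_m(\FS(\norm_\bullet)))$ is nonpositive, so $d_1 = -\vol$ at each finite level; dividing by $m$ and passing to the limit (using the identities $d_1(\norm_\bullet,\norm'_\bullet) = \lim_m m\mi d_1(\norm_m,\norm'_m)$ and the analogous one for $\vol$ recorded in Section 3.2) yields $d_1(\norm_\bullet, \bigN_\bullet(\FS(\norm_\bullet))) = -\vol(\norm_\bullet, \bigN_\bullet(\FS(\norm_\bullet)))$. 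Theorem B then rewrites this asymptotic volume as $E(\FS(\norm_\bullet),\, \FS(\bigN_\bullet(\FS(\norm_\bullet))))$. Since $\FS(\norm_\bullet) \in \PSH^\uparrow(L)$ by the preceding theorem, and that theorem's proof gave the identity $\FS(\bigN_\bullet(\phi)) = \phi$ for every $\phi \in \PSH^\uparrow(L)$, the two arguments of $E$ coincide and the energy is zero. Hence $d_1(\norm_\bullet, \bigN_\bullet(\FS(\norm_\bullet))) = 0$, proving the equivalence.

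The main obstacle here is conceptual rather than computational: the lemma is really a corollary of Theorem B combined with the just-established characterization $\FS(\bigN_\bullet(\phi)) = \phi$ on $\PSH^\uparrow(L)$, and all the heavy machinery (continuity of envelopes, Monge-Amp\`ere theory, Okounkov-body estimates) is already packaged inside those two results. The only minor subtlety is to check that replacing $\sup_k \FS_k(\norm_k)$ by its usc regularization $\FS(\norm_\bullet)$ does not spoil the chain of inequalities in the first step, but this is automatic since the usc regularization can only increase the underlying function.
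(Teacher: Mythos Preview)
Your proof is correct and follows essentially the same route as the paper. The paper cites \cite[L7.24]{boueri} for the inequality $\norm_\bullet\geq \bigN_\bullet(\FS(\norm_\bullet))$ where you unfold the definitions directly, and for the equivalence it invokes Lemma~\ref{minitheoremb} (whose proof is exactly your Theorem~B argument that $d_1=-\vol=E(\FS(\norm_\bullet),\FS(\bigN_\bullet(\FS(\norm_\bullet))))=0$), then concludes $\FS(\bigN_\bullet(\FS(\norm_\bullet)))=\FS(\norm_\bullet)$ via \cite[T7.27]{boueri} and the fact that $\FS(\norm_\bullet)\in\PSH^\uparrow$, which is the identity you quote from the preceding theorem; so your version simply inlines the paper's auxiliary lemma and makes the elementary inequality explicit.
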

\begin{proof}
The first assertion follows from \cite[L7.24]{boueri} (and its proof). To show asymptotic equivalence, by the previous lemma, it is thus enough to show that $\FS(\bigN_\bullet(\FS(\norm_\bullet)))=\FS(\norm_\bullet)$. But, by \cite[T7.27]{boueri}, 
$$\FS(\bigN_\bullet(\FS(\norm_\bullet)))=Q(\FS(\norm_\bullet)),$$
which in turn is equal to $\FS(\norm_\bullet)$ itself, since it is a limit of an increasing net of Fubini-Study potentials.
\end{proof}

\noindent We now prove Theorem \ref{theoremb}.
\begin{proof}
Note that
$$d_1(\norm_\bullet,\norm'_\bullet)=\vol(\norm_\bullet,\norm_\bullet\vee\norm'_\bullet)+\vol(\norm'_\bullet,\norm_\bullet\vee\norm'_\bullet),$$
and by Theorem \ref{theorema}, the right-hand side is in fact equal to
\begin{equation}\label{distenergy1}E(\FS(\norm_\bullet),\FS(\norm_\bullet\vee\norm'_\bullet))+E(\FS(\norm'_\bullet),\FS(\norm_\bullet\vee\norm'_\bullet)).
\end{equation}
The trick is now to prove the following: 
$$\FS(\norm_\bullet\vee\norm'_\bullet)=Q(\FS(\norm_\bullet)\wedge \FS(\norm'_\bullet)),$$
where $\wedge$ denotes the $\min$ operator.
Since, by Lemma \ref{comparisonoperators},
$$\norm_\bullet\geq \bigN_\bullet(\FS(\norm_\bullet)),$$
$$\norm_\bullet\sim \bigN_\bullet(\FS(\norm_\bullet)),$$
and the same holds for $\norm'_\bullet$,
then
$$\norm_\bullet\vee\norm'_\bullet\geq \bigN_\bullet(\FS(\norm_\bullet))\wedge \bigN_\bullet(\FS(\norm'_\bullet)),$$
$$\norm_\bullet\vee\norm'_\bullet\sim \bigN_\bullet(\FS(\norm_\bullet))\wedge \bigN_\bullet(\FS(\norm'_\bullet)),$$
and furthermore, by \cite[(4.4)]{ber}, 
$$\bigN_\bullet(\FS(\norm_\bullet))\vee \bigN_\bullet(\FS(\norm'_\bullet))=\bigN_\bullet(\FS(\norm_\bullet)\wedge \FS(\norm'_\bullet)).$$
Lemma \ref{minitheoremb} then implies
$$\FS(\bigN_\bullet(\FS(\norm_\bullet)\wedge \FS(\norm'_\bullet)))=\FS(\norm_\bullet\vee\norm'_\bullet),$$
and the left-hand side is equal to $Q(\FS(\norm_\bullet)\wedge \FS(\norm'_\bullet))$, by \cite[T7.27]{boueri}. We may now rewrite \eqref{distenergy1} as:
$$d_1(\norm_\bullet,\norm'_\bullet)=E(\FS(\norm_\bullet),Q(\FS(\norm_\bullet)\wedge \FS(\norm'_\bullet)))+E(\FS(\norm'_\bullet),Q(\FS(\norm_\bullet)\wedge \FS(\norm'_\bullet))).$$
Now, 
$$\FS(\norm_\bullet)\geq Q(\FS(\norm_\bullet)\wedge \FS(\norm'_\bullet))$$
and
$$\FS(\norm'_\bullet)\geq Q(\FS(\norm_\bullet)\wedge \FS(\norm'_\bullet)),$$
so that the two energies above have the same sign. In particular, the distance $d_1(\norm_\bullet,\norm'_\bullet)$ vanishes if and only if the energies vanish, and we conclude using Lemma \ref{minitheoremb}.
\end{proof}

\begin{remark}Note that the previous proof shows that there is an expression of the $d_1$ distance using Monge-Ampère energies, analogous to \cite[C4.21]{ber}:
$$d_1(\norm_\bullet,\norm'_\bullet)=E(\FS(\norm_\bullet),Q(\FS(\norm_\bullet)\wedge \FS(\norm'_\bullet)))+E(\FS(\norm'_\bullet),Q(\FS(\norm_\bullet)\wedge \FS(\norm'_\bullet))).$$
\end{remark}

\newpage

\bibliographystyle{alpha}
\bibliography{bib}

\end{document}